\numberwithin{equation}{section}
\newtheorem{Proposition}{Proposition}[section]
\newtheorem{Theorem}[Proposition]{Theorem}
\newtheorem{Lemma}[Proposition]{Lemma}
\newtheorem{Corollary}[Proposition]{Corollary}
\title  {Global analyticity of affine hyperbolic spheres in the even dimensional space}
\begin{document}
\address{Huaiyu Jian: Department of Mathematical Sciences, Tsinghua University, Beijing 100084, China.}

\address{Xianduo Wang:  Department of Mathematical Sciences, Tsinghua University, Beijing 100084, China.}

\email{hjian@tsinghua.edu.cn;  xd-wang18@mails.tsinghua.edu.cn }

\thanks{This work was supported by NSFC 12141103 }


\bibliographystyle{plain}

\maketitle

\baselineskip=15.8pt
\parskip=3pt

\centerline {\bf Huaiyu Jian \ \ \ \  Xianduo Wang  }

\vskip20pt

\noindent {\bf Abstract}:  A number geometric problems, including affine hyperbolic spheres,
Hilbert metrics and Minkowski type problems, are reduced to a  singular  {M}onge-{A}mp\`ere equation
 which can be written locally  as a class of  {M}onge-{A}mp\`ere equations with singularity
at boundary. We  estimate the boundary derivatives of all orders for the solutions to
the class of singular equations  as possible as optimal, and prove that the solution to the equation is globally analytic
if the dimension of the space is even.
As a corollary, we obtain the  global analyticity of affine hyperbolic spheres in the even dimensional space.


 \vskip20pt
 \noindent{\bf Key Words:  Affine sphere,
				{M}onge-{A}mp\`ere equation,  regularity, derivative estimate, analyticity  }
 \vskip20pt

\noindent {\bf AMS Mathematics Subject Classification}:   35J96, 35J60,  35J96, 53A15

\vskip20pt

\noindent {\bf  Running head}:     Analyticity of affine hyperbolic spheres
\vskip20pt

\baselineskip=15.8pt
\parskip=3pt

\newpage

\centerline {\bf   Global analyticity of affine hyperbolic spheres in the even dimensional space}

 \vskip10pt

\centerline {Huaiyu Jian \ \ \ \  Xianduo Wang }

\maketitle

\baselineskip=15.8pt
\parskip=3.0pt

\section{Introduciton} 

In this paper, we study the boundary analyticity of solutions to a class of singular    {M}onge-{A}mp\`ere equations and obtain the 
global analyticity of affine hyperbolic spheres. This is sufficient to    study the global analyticity for the graph of convex solutions to the following Dirichlet problem of a singular Monge-Amp\`ere equation:
\begin{equation}\label{v}
	\left\{ \begin{aligned}
		&\det D^2 v = \frac{1}{|v|^{n+2}} &~\mbox{in}~&\varOmega, \\
		&v = 0 &~\mbox{on}~&\partial\varOmega,
	\end{aligned} \right.
\end{equation}
where $\varOmega$ is a bounded convex domain in $\mathbb{R}^n$ for $n \ge 2$. In fact, 
the Legendre transform of the solution $v$ to \eqref{v} determines a complete affine hyperbolic sphere \cite{MR437805}. Moreover, $(-v)^{-1}\sum v_{x_i x_j} dx_i dx_j$ is a Hilbert metric in the convex domain $\varOmega$ \cite{MR0358078}. Besides, the Minkowski problem in centre-affine geometry and the  $L_p$-Minkowski problem with $p=n+2$   are reduced to the same type equation as \eqref{v}\cite{[CW], [JLW], [Lut]}. See page 441 in \cite{MR3024302}
for the details. 

Affine hyperbolic spheres are a well-known important model in affine geometry \cite{MR0365607, MR859275} as well as a fundamental model in affine sphere relativity \cite{MR3607461}. The existence and uniqueness of solution to \eqref{v} in the space $C^{\infty}(\varOmega)\cap C(\overline{\varOmega})$ were obtained in \cite{MR437805}. However, since the equation \eqref{v} is singular near the boundary, the solution $v$ is only H\"older continuous near $\partial \varOmega$ \cite{MR3771826}. We should mention that the problem about global smoothness and analyticity of affine hyperbolic spheres was once raised by S.T. Yau in a lecture at the Chinese Academy of Sciences early in 2006.

In order to obtain the regularity of affine hyperbolic spheres, it is sufficient to study the regularity of the graph of the solution $v$ to \eqref{v}. Jian and Wang proved in \cite{MR3024302} that the graph of $v$ is $C^{n+2,\alpha}$ up to its boundary for any $\alpha \in (0,1)$. To obtain the boundary regularity, 
for any boundary point $x_0\in \partial \Omega$, without loss of generality assume that $x_0=0 $ and $e_n=(0,\cdots,0,1)$ is the inner normal of $\partial \varOmega$ at $0$. By the rotation of coordinates
\begin{equation*}\begin{aligned}
	&y_n = -x_{n+1},\\
	&y_k = x_k,~1\le k \le n-1,\\
	&y_{n+1} = x_n,
\end{aligned}\end{equation*}
in the new coordinates, the graph of $v$ near the origin can be represented as $y_{n+1}=u(y)$. Then
the problem \eqref{v} can be rewritten as
\begin{equation}\label{u}
	\left\{ \begin{aligned}
		&\det D^2 u = \left( \frac{D_n u}{y_n} \right)^{n+2} &~\mbox{in}~&B_R^+=B_R \cap\{y_n>0\}, \\
		&u = \varphi &~\mbox{on}~&\partial B_R^+ \cap \{y_n=0\},
	\end{aligned} \right.
\end{equation}
where $y = (y', y_n)\in \mathbb{R}^n_+$ and $\{x_n = \varphi(x')\}$ represents a part of $\partial \varOmega$ near $0$ in the original coordinates. 
It was shown  in \cite{MR3024302}   that the solution to \eqref{u} is $C^{n+2, \alpha}$ near $0$ for any $\alpha\in (0,1)$, which yields that affine hyperbolic spheres are globally $C^{n+2, \alpha}$ smooth.

Furthermore, if the dimension $n$ is odd, the solution $u$ to \eqref{u} is at most $C^{n+2, \alpha}$, which was proved in  \cite{MR4361965}  by   expanding   $u$ near $0$. On the other hand, if $n$ is even,  the compatibility condition, given by \cite{MR3024302}, was verified in \cite{MR3705680}, which shows  that the solution to \eqref{u} is globally $C^{\infty}$ smooth. 

In this paper, by the result of smoothness in \cite{MR3705680},  we will prove the analyticity of the solution to \eqref{u} and then obtain global analyticity of affine hyperbolic spheres for the case that $n$ is even. our  main result is 
\begin{Theorem}\label{thm1.1}
	Assume that $n\ge 2$ is even and $\varphi$ is an analytic, uniformly convex function in $\mathbb{R}^{n-1}$. If $u \in C^{\infty}(\overline{\mathbb{R}^n_+})$ is a convex solution to
	\begin{equation}\label{Eq}
		\left\{ \begin{aligned}
			&\det D^2 u = \left( \frac{D_n u}{x_n} \right)^{n+2} &~\mbox{in}~&\mathbb{R}^n_+=\mathbb{R}^n \cap\{x_n>0\}, \\
			&u = \varphi &~\mbox{on}~&\partial \mathbb{R}^n_+,
		\end{aligned} \right.
	\end{equation}
	then $u$ is analytic in $\overline{\mathbb{R}^n_+}$.
\end{Theorem}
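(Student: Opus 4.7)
The plan is to reduce analyticity of $u$ to a Cauchy-type derivative estimate, carried out separately in the interior of $\mathbb{R}^n_+$ and up to $\{x_n=0\}$, after first removing the boundary singularity by a change of variables. Since $\det D^2u$ stays finite at the boundary, the right-hand side of \eqref{Eq} forces $D_nu(x',0)\equiv 0$; consequently $w:=D_nu/x_n\in C^\infty(\overline{\mathbb{R}^n_+})$, and L'H\^opital combined with the boundary trace of \eqref{Eq} gives $w(x',0)=(\det D^2\varphi(x'))^{1/(n+1)}$, which is analytic and positive under our hypotheses. The evenness of $n$ together with the compatibility analysis of \cite{MR3024302} used in \cite{MR3705680} forces every odd-order normal derivative of $u$ on $\{x_n=0\}$ to vanish, so that $U(x',t):=u(x',\sqrt{t})$ is smooth on $\overline{\mathbb{R}^{n-1}\times[0,\infty)}$. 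In these coordinates \eqref{Eq} becomes
\begin{equation*}
\bigl(2D_tU+4tD_t^2U\bigr)\det D^2_{x'}U-4t\,(\nabla'D_tU)^{T}\mathrm{adj}(D^2_{x'}U)\nabla'D_tU = 2^{n+2}(D_tU)^{n+2},
\end{equation*}
with $U(x',0)=\varphi(x')$; the singularity has been replaced by a Baouendi--Goulaouic type degeneration in which the coefficient of $D_t^2U$ vanishes linearly at $t=0$, while the coefficient of $D_tU$ is the positive analytic factor $2\det D^2\varphi$.

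Interior analyticity of $u$ on $\mathbb{R}^n_+$ then follows from the classical Morrey--Nirenberg iteration applied to the linearization $u^{ij}D_{ij}$, which is uniformly elliptic on each compact subset of $\mathbb{R}^n_+$ and has analytic coefficients there, differentiating \eqref{Eq} tangentially and iterating interior Schauder estimates to obtain $|D^\alpha u|\le C^{|\alpha|+1}|\alpha|!$ on compacta. For analyticity up to $\{x_n=0\}$ I would run the analogous iteration on the rewritten equation for $U$: differentiating it $|\alpha|$ times in $x'$ and $\ell$ times in $t$, the Fuchsian structure lets one solve algebraically for $D_t^{\ell+1}U$ in terms of derivatives of strictly smaller normal order plus an analytic remainder which, combined with weighted tangential Schauder estimates and a careful Fa\`a di Bruno accounting of the derivatives of $\det$ and of the power $(\,\cdot\,)^{n+2}$, closes an induction yielding $|D_{x'}^{\alpha}D_t^{\ell}U|\le C^{|\alpha|+\ell+1}(|\alpha|+\ell)!$ near each boundary point. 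Pulling this bound back through $t=x_n^2$ then gives analyticity of $u$ up to $\{x_n=0\}$.

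The main obstacle will be preserving the factorial growth across the Fuchsian boundary: because the coefficient of $D_t^2U$ vanishes at $t=0$, a naive Schauder estimate loses a factor of order $|\alpha|+\ell$ per iteration. To sidestep this loss one must exploit that a single $t$-differentiation of the rewritten equation produces a non-vanishing coefficient $4\det D^2\varphi$ in front of $D_t^2U$ at $t=0$, so that the equation is in effect non-characteristic at the boundary at the level of $D_t^2U$ and a Fuchsian Cauchy--Kowalevski type estimate applies with a constant independent of the order of derivatives. The positivity of $D_tU(x',0)$ and the analyticity of $\varphi$ are essential to making the Fa\`a di Bruno combinatorics for $\det D^2_{x'}U$ and the algebraic normal-derivative solvability close at the right rate.
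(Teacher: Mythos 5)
Your proposal takes a genuinely different route from the paper. The paper never performs a $t=x_n^2$ change of variables; instead it first proves tangential analyticity (Sections 3--4) via barrier and scaled Schauder estimates for the $x'$-differentiated linearized elliptic equation, and then sets up, in Sections 5--6, the hierarchy $f_k=x_n\,D_n^k(D_nu/x_n)$, each $f_k$ satisfying a \emph{singular first-order ODE} in $x_n$ with an explicit integral (variation-of-constants) representation; it closes the induction on $k$ up to $k=n+1$, at which point $\xi=f_{n+1}/x_n$ obeys a \emph{non}-singular ODE and can be handled directly. So the paper stays entirely in the $x_n$ variable and avoids ever invoking a general Fuchsian-PDE analyticity theorem.

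The critical gap in your plan is the assertion that ``the evenness of $n$ together with the compatibility analysis of \cite{MR3024302} used in \cite{MR3705680} forces every odd-order normal derivative of $u$ on $\{x_n=0\}$ to vanish,'' which you then use to conclude that $U(x',t)=u(x',\sqrt t)$ is $C^\infty$ up to $t=0$. Neither reference establishes that. They prove $u\in C^\infty(\overline{\mathbb R^n_+})$, which is strictly weaker than $u$ being even-to-infinite-order in $x_n$ at the boundary. In fact, the formal Taylor expansion of $u$ in $x_n$ at a boundary point has an \emph{indicial degree of freedom}: if one writes $u=\varphi+\sum_{m\ge2}a_m(x')x_n^m/m!$ and plugs into \eqref{Eq}, the recursion determines $a_2,a_4,\dots$ and forces $a_3=0$, but the coefficient $a_{n+3}$ (an \emph{odd} order when $n$ is even) drops out of the recursion; it is a shooting parameter fixed by the global problem, not something the local equation forces to vanish. (For $n=2$ this is $D_n^5u(x',0)$: the order-$x_n^3$ equation reads $\tfrac16 a_5\det D^2_{x'}\varphi=\tfrac16 a_5 a_2^{\,n+1}$, an identity.) So the very fact $a_{n+3}\equiv0$, which your construction of $U$ needs, is at least as deep as a large part of Section~6 of the paper, and cannot be cited as input; assuming it makes the argument circular. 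Once you do not have the evenness, $U$ is not even $C^2$ in $t$ at $t=0$ and the Fuchsian rewriting collapses. Separately, even granting the evenness, the passage from ``the coefficient of $D_t^2U$ vanishes linearly and the coefficient of $D_tU$ is analytic and nonvanishing'' to factorial bounds $|D_{x'}^\alpha D_t^\ell U|\le C^{|\alpha|+\ell+1}(|\alpha|+\ell)!$ is exactly where the paper does its hardest work (the weighted tangential Schauder/iteration plus the integral-representation bootstrap); your sketch acknowledges the order-dependent loss at the degeneracy but only gestures at a Fuchsian Cauchy--Kowalevski estimate to fix it, so this part would still have to be carried out essentially from scratch.
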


Mote that the first equation in \eqref{Eq} is not singular away from the boundary $\{x=(x',x_n):x_n=0\}$. By the standard theory \cite{MR0118970,MR2284971, MR1431263, M1, M2, M3}, we see that the smooth solution $u$ to \eqref{Eq} is analytic away from the boundary. Thus, it is enough to prove the analyticity of $u$ near the boundary.
For this purpose, we define for any $x'_0 \in \mathbb{R}^{n-1}$ and $R>0$ that
\begin{equation*}
	G_R(x'_0) \triangleq \{x=(x',x_n)\in \mathbb{R}^n_+: |x'-x'_0|<R, 0<x_n<R\}
\end{equation*}
and $G_R\triangleq G_R(0')$. Then we only need to find a small $r>0$ such that $u$ is analytic in $\overline{G_r}$ since \eqref{Eq} is invariant under the translation along $x'$-directions.

The key to prove analyticity is the estimating the high-order derivatives about the solution $u$ to \eqref{Eq}. Considering  the singularity of \eqref{Eq} along the $x_n$-direction,  
 Motivated by the ideas from \cite{2018The, 2020The} we classify the derivatives of $u$ into the tangential ones with $D_{x'}$ and the normal ones with $D_n$. Then, in order to obtain high-order derivative estimates, we will make inductive arguments on the order of tangential and normal derivatives respectively. Besides, we will need basic elliptic estimates, related iterations and technical combination equalities or inequalities. 

The paper is organized as follows. In Section \ref{sec2}, we derive some basic and important lemmas as preliminaries. In Section \ref{sec3}, we introduce the linearized problem of \eqref{Eq} and claim the tangential analyticity of solutions to \eqref{Eq}. In Section \ref{tan}, we   finish the proof of the claim in Section \ref{sec3}. In Section \ref{sec5}, we introduce a series of ordinary differential equations 
originated from \cite{MR3024302, MR3705680} and derive some lemmas about derivative estimates. In Section \ref{sec6}, we prove the high-order derivative estimates of solution to \eqref{Eq} and then finish the proof of Theorem \ref{thm1.1}.

\section{Preliminaries}\label{sec2}
In this section, we will introduce a nice property of our problem \eqref{Eq} and some basic facts in computations.

First, we refer to \cite{MR3024302} and have the following lemma.

\begin{Lemma}\label{lemma2.1}
	Let $u \in C^2(\overline{\mathbb{R}^n_+})$ be a smooth convex solution to \eqref{Eq}. Then for any $x'\in \mathbb{R}^{n-1}$, we have $D_n u(x',0) =0$. Moreover, for any nonnegative integer $l$, we have $D_{x'}^l D_n u(x',0) =0$.
\end{Lemma}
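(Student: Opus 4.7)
The plan is to exploit the singular structure of the equation: the left-hand side $\det D^2 u$ is continuous up to $\{x_n=0\}$ because $u\in C^2(\overline{\mathbb{R}^n_+})$, while the right-hand side carries a factor of $x_n^{-(n+2)}$. Finiteness of $\det D^2 u$ at the boundary must therefore force $D_n u$ to decay at least linearly in $x_n$ as $x_n\to 0^+$, which is the content of the first assertion.

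To carry this out, I would fix an arbitrary $x_0'\in\mathbb{R}^{n-1}$ and a closed box $K=\overline{B_r(x_0')}\times[0,r]\subset\overline{\mathbb{R}^n_+}$. Since $D^2 u$ is continuous on $K$, the function $\det D^2 u$ is bounded there by some finite $M$. Plugging this into the equation \eqref{Eq} gives $|D_n u(x',x_n)/x_n|^{n+2}\le M$ throughout $K\cap\{x_n>0\}$, so $|D_n u(x',x_n)|\le M^{1/(n+2)}\,x_n$. Letting $x_n\to 0^+$ and using the continuity of $D_n u$ up to the boundary, we conclude $D_n u(x',0)=0$ for $|x'-x_0'|<r$, and since $x_0'$ was arbitrary this gives $D_n u(\cdot,0)\equiv 0$ on $\mathbb{R}^{n-1}$.

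For the ``moreover'' clause, once we know that the smooth function $D_n u$, restricted to $\{x_n=0\}$, is identically zero as a function of $x'$, all of its tangential derivatives vanish identically. Since $u\in C^\infty(\overline{\mathbb{R}^n_+})$, the tangential derivative commutes with the trace onto $\{x_n=0\}$, so for every nonnegative integer $l$ we obtain $D_{x'}^l D_n u(x',0)=D_{x'}^l\bigl(D_n u(\cdot,0)\bigr)\equiv 0$.

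There is no real technical obstacle; the one point that requires mild attention is ensuring that the hypotheses are used only where permitted, namely that $\det D^2 u$ is bounded on compact subsets of $\overline{\mathbb{R}^n_+}$ and that $D_n u$ is continuous across $\{x_n=0\}$. Both are built into $u\in C^2(\overline{\mathbb{R}^n_+})$, so the argument is self-contained and the extraction of $D_n u(x',0)=0$ is essentially forced by the PDE, with the higher-order assertion being a formal consequence of smoothness.
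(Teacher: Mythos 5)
Your proof is correct and is the natural argument: boundedness of $\det D^2 u$ on compact subsets of $\overline{\mathbb{R}^n_+}$ forces $|D_n u|\le M^{1/(n+2)}x_n$, giving $D_n u(\cdot,0)\equiv 0$, after which the tangential derivatives vanish trivially. The paper itself offers no proof here but simply cites Lemma 3.2 of \cite{MR3024302}, and your self-contained derivation is the same idea used there.
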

\begin{proof}
	See Lemma 3.2 in \cite{MR3024302}.
\end{proof}

As a corollary of Lemma \ref{lemma2.1}, it follows that
\begin{Lemma}\label{lemma2.2}
	Let $u \in C^2(\overline{\mathbb{R}^n_+})$ be a smooth convex solution to \eqref{Eq}. Then for any $x'\in \mathbb{R}^{n-1}$,
	\begin{equation}\label{2.1}
		u_{nn}(x',0)=[\det D^2_{x'}\varphi(x')]^{\frac{1}{n+1}}.
	\end{equation}
\end{Lemma}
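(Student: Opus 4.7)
The plan is to evaluate both sides of the Monge--Amp\`ere equation in \eqref{Eq} at the boundary $\{x_n=0\}$ by taking the limit $x_n\to 0^+$, and then solve for $u_{nn}(x',0)$.

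First, I would compute the structure of the Hessian $D^2 u$ on the boundary. Since $u(x',0)=\varphi(x')$, differentiating the boundary condition twice in the tangential directions gives $u_{ij}(x',0)=\varphi_{ij}(x')$ for $1\le i,j\le n-1$. By Lemma \ref{lemma2.1} applied with $l=1$, the mixed derivatives satisfy $u_{in}(x',0)=D_iD_nu(x',0)=0$ for $1\le i\le n-1$. Hence
\[
D^2 u(x',0)=\begin{pmatrix} D^2_{x'}\varphi(x') & 0 \\ 0 & u_{nn}(x',0) \end{pmatrix},
\]
so $\det D^2 u(x',0)=u_{nn}(x',0)\cdot\det D^2_{x'}\varphi(x')$.

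Next, I would analyze the right-hand side. Since $u\in C^\infty(\overline{\mathbb{R}^n_+})$ and $D_nu(x',0)=0$ by Lemma \ref{lemma2.1} (taking $l=0$), a Taylor expansion in $x_n$ at fixed $x'$ yields
\[
D_nu(x',x_n)=u_{nn}(x',0)\,x_n+O(x_n^2)\quad\text{as }x_n\to 0^+,
\]
so $D_nu(x',x_n)/x_n\to u_{nn}(x',0)$. Passing to the limit $x_n\to 0^+$ in the equation $\det D^2 u=(D_nu/x_n)^{n+2}$ gives
\[
u_{nn}(x',0)\cdot\det D^2_{x'}\varphi(x')=[u_{nn}(x',0)]^{n+2}.
\]

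Finally, since $\varphi$ is uniformly convex, $\det D^2_{x'}\varphi(x')>0$, and convexity of $u$ together with $u=\varphi$ on the boundary forces $u_{nn}(x',0)>0$ (otherwise the determinant of $D^2 u(x',0)$ would vanish, contradicting the above identity). Dividing by $u_{nn}(x',0)$ and taking the $(n+1)$-th root yields the claimed formula \eqref{2.1}. The calculation is routine once Lemma \ref{lemma2.1} is in hand, so there is no real obstacle here; the only subtle point is justifying that the limit of $(D_nu/x_n)^{n+2}$ coincides with $[u_{nn}(x',0)]^{n+2}$, which is why smoothness of $u$ up to the boundary is essential.
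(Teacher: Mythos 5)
Your argument is essentially identical to the paper's: both use Lemma \ref{lemma2.1} to obtain the block-diagonal structure of $D^2u(x',0)$, identify $\lim_{x_n\to 0}D_nu/x_n$ with $u_{nn}(x',0)$ via the difference quotient (or equivalently a Taylor expansion), and pass to the limit in \eqref{Eq} to get $[u_{nn}(x',0)]^{n+2}=u_{nn}(x',0)\det D^2_{x'}\varphi(x')$. One small slip in your closing step: the claim that $u_{nn}(x',0)=0$ would ``contradict the above identity'' is circular, since in that case the identity just reads $0=0$; the positivity of $u_{nn}(x',0)$ is not extracted from the identity but is a separate non-degeneracy fact (implicit in the paper via its reference to Lemma 3.5 of \cite{MR3024302}), which is what allows one to cancel a factor of $u_{nn}$ and conclude \eqref{2.1}.
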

\begin{proof}
	The proof is almost the same as Lemma 3.5 in \cite{MR3024302}. Fix $x' \in \mathbb{R}^{n-1}$. It follows from Lemma \ref{lemma2.1} that for any integer $1\le i\le n-1$, $u_{in}(x',0)=0$ and
	\[
		\lim_{x_n\to 0} \frac{u_n(x',x_n)}{x_n} =\lim_{x_n\to 0} \frac{u_n(x',x_n)-u_n(x',0)}{x_n}=u_{nn}(x',0).
	\]
	Note that $D_{x'}^2 u(x',0)= D_{x'}^2 \varphi(x',0)$. By \eqref{Eq}, we obtain
	\[
	[u_{nn}(x',0)]^{n+2}=\lim_{x_n\to 0} \det D^2 u(x',x_n)=\det D^2 u(x',0)=
	u_{nn}(x',0)D^2_{x'}\varphi(x',0).
	\]
	Then \eqref{2.1} is proved.
\end{proof}

From the invariance of \eqref{Eq} and uniform convexity of $\varphi$, we assume
\begin{equation}\label{2.2}
	\varphi(0)=0 , D_{x'}\varphi(0)=0, D_{x'}^2 \varphi(0)=D,
\end{equation}
where $D = diag\{\lambda_1,\cdots,\lambda_{n-1}\}$ and $\lambda_i >0 (1\le i \le n-1)$. Denote 
$$\lambda=(\prod_{i=1}^{n-1}\lambda_i)^{-\frac{1}{2(n+1)}}\; \text {and}\; 
  \tilde u(x)=u(D^{-\frac12} x', \lambda x_n).$$  Then by \eqref{Eq}, we have
\begin{equation}\label{2.3}\begin{aligned}
		\det D^2 \tilde{u}(x) &= (\det D)^{-1} \lambda^2 \det D^2 u(D^{-\frac12} x', \lambda x_n)\\
		&= \lambda^{2(n+1)+2} \Big(\frac{D_n u(D^{-\frac12} x', \lambda x_n)}{\lambda x_n}\Big)^{n+2}\\
		&=\lambda^{2(n+2)}\Big(\frac{D_n \tilde{u}(x)}{\lambda^2 x_n}\Big)^{n+2}\\
        & =\Big(\frac{D_n \tilde{u}(x)}{x_n}\Big)^{n+2}
\end{aligned}\end{equation}
and $\tilde{u}(x',0)=\varphi(D^{-\frac12} x')$. By \eqref{2.2}, \eqref{2.3} and Lemma \ref{lemma2.2}, we obtain
\begin{equation}\label{2.4}
	\tilde{u}(0)=0, D\tilde{u}(0)=0, D^2 \tilde{u}(0)=I_n.
\end{equation}
Here and below, $I_k$ denotes the unit matrices of $k$ order. Therefore, with no loss of generality, we can replace $u$ with $\tilde{u}$ and always assume
\begin{equation}\label{2.5}
	u(0)=0, Du(0)=0, D^2 u(0)=I_n
\end{equation}
in the rest of this paper.

Using the notion $l^+=\max\{l, 0\}$ for any integer $l$, we have the following prorperty.
\begin{Lemma}\label{lemma2.3}
	For any integer $k\ge 2$ and nonnegative integers $\lambda, \alpha_1,\cdots,\alpha_k$,
	\begin{align}
		\label{2.6}&(\alpha_1-\lambda)^+ + (\alpha_2-\lambda)^+ \le (\alpha_1+\alpha_2-\lambda)^+;\\
		\label{2.7}&\sum_{i=1}^k (\alpha_i-\lambda)^+ \le \Big(\sum_{i=1}^k \alpha_i-\lambda\Big)^+.
	\end{align}
	Moreover, if $\lambda, \alpha_1, \cdots, \alpha_k$ are positive, we have
	\begin{equation}\label{2.8}
		\sum_{i=1}^k (\alpha_i-\lambda)^+ \le \Big(\sum_{i=1}^k \alpha_i-\lambda-k+1\Big)^+.
	\end{equation}
\end{Lemma}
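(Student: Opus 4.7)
The plan is to handle the three inequalities in order, since each successive one builds on the previous. For \eqref{2.6}, I will do a short case analysis depending on which of $\alpha_1,\alpha_2$ exceed $\lambda$. If both exceed $\lambda$, the left side equals $\alpha_1+\alpha_2-2\lambda$ and the right side equals $\alpha_1+\alpha_2-\lambda$, giving the inequality because $\lambda\ge 0$. If exactly one exceeds $\lambda$, say $\alpha_1\ge\lambda>\alpha_2$, then the left side is $\alpha_1-\lambda$ while the right side is $(\alpha_1+\alpha_2-\lambda)^+\ge\alpha_1-\lambda$ since $\alpha_2\ge 0$. If neither exceeds $\lambda$, the left side is $0$ and the right side is $\ge 0$. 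This exhausts the cases.

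For \eqref{2.7}, I will induct on $k$. The base case $k=2$ is \eqref{2.6}. For the inductive step, I will group the first $k-1$ terms, apply the induction hypothesis to obtain $\sum_{i=1}^{k-1}(\alpha_i-\lambda)^+\le(\sum_{i=1}^{k-1}\alpha_i-\lambda)^+$, and then apply \eqref{2.6} with $\alpha_1'=\sum_{i=1}^{k-1}\alpha_i$ and $\alpha_2'=\alpha_k$ to absorb the last term.

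For \eqref{2.8}, the positivity hypothesis is essential, and this is the main (though still elementary) point of the lemma. I will let $S=\{i:\alpha_i\ge\lambda\}$ and $s=|S|$. The left side equals $\sum_{i\in S}(\alpha_i-\lambda)$. If $s=0$ the left side vanishes and there is nothing to prove, so assume $s\ge 1$. Since every $\alpha_j$ is a positive integer, $\alpha_j\ge 1$ for all $j$, hence $\sum_{i\notin S}\alpha_i\ge k-s$ and therefore
\[
\sum_{i\in S}\alpha_i\;\le\;\sum_{i=1}^k\alpha_i-(k-s).
\]
Combining this with $s\lambda=\lambda+(s-1)\lambda\ge\lambda+(s-1)$ (using $\lambda\ge 1$), I obtain
\[
\sum_{i\in S}(\alpha_i-\lambda)\;\le\;\sum_{i=1}^k\alpha_i-(k-s)-\lambda-(s-1)\;=\;\sum_{i=1}^k\alpha_i-\lambda-k+1.
\]
Since the left side is a sum of nonnegative terms, it is nonnegative, so the chain of inequalities remains valid after replacing the right side by its positive part, giving \eqref{2.8}.

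The proofs are entirely arithmetic; the only subtle point is that \eqref{2.8} fails without the positivity of the $\alpha_i$ (setting some $\alpha_i=0$ breaks the estimate $\sum_{i\notin S}\alpha_i\ge k-s$), and similarly without $\lambda\ge 1$. I do not anticipate any genuine obstacle beyond keeping the case split clean and remembering to invoke both positivity assumptions at the right moment.
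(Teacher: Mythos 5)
Your proofs of \eqref{2.6} and \eqref{2.7} coincide with the paper's (case analysis plus induction). For \eqref{2.8}, however, you take a genuinely different route. The paper's proof is a one-line reduction to \eqref{2.7}: since $\lambda$ and all $\alpha_i$ are positive, one rewrites $(\alpha_i-\lambda)^+=[(\alpha_i-1)-(\lambda-1)]^+$ and applies \eqref{2.7} to the shifted nonnegative integers $\alpha_i-1$ and $\lambda-1$, which immediately produces the extra $-(k-1)$. Your argument instead works directly with the index set $S=\{i:\alpha_i\ge\lambda\}$, bounding $\sum_{i\notin S}\alpha_i\ge k-s$ from $\alpha_i\ge1$ and $s\lambda\ge\lambda+(s-1)$ from $\lambda\ge1$, then assembling the estimate. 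Both are correct; the paper's shift trick is shorter and makes the dependence on \eqref{2.7} explicit, whereas your version is self-contained and makes it transparent exactly where each positivity hypothesis enters (and why dropping either one breaks the bound), which you correctly flag at the end.
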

\begin{proof}
	First, we prove \eqref{2.6} case by case. If $\alpha_1 > \lambda$, we have
	\begin{equation*}
		(\alpha_1-\lambda)^+ + (\alpha_2-\lambda)^+ - (\alpha_1+\alpha_2-\lambda)^+ = \alpha_1-\lambda + (\alpha_2-\lambda)^+ - (\alpha_1+\alpha_2-\lambda)=(\alpha_2-\lambda)^+ -\alpha_2\le 0.
	\end{equation*}
	If $\alpha_2 > \lambda$, it is the same as the first case. If $\alpha_1\le \lambda$ and $\alpha_2 \le \lambda$, it is obvious that
	\[
	(\alpha_1-\lambda)^+ + (\alpha_2-\lambda)^+=0\le (\alpha_1+\alpha_2-\lambda)^+.
	\]
	So \eqref{2.6} is proved, while  \eqref{2.7} is a simple corollary of \eqref{2.6} by induction .
	
	Finally, we prove \eqref{2.8} by \eqref{2.7}. Since $\lambda,\alpha_1,\cdots,\alpha_k$ are positive integers, then $\lambda-1, \alpha_i-1(1\le i\le k)$ are nonnegative integers. Thus by \eqref{2.7}, we obtain
		\begin{equation*}\begin{aligned}
			&\sum_{i=1}^k (\alpha_i-\lambda)^+ =\sum_{i=1}^k [(\alpha_i-1)-(\lambda-1)]^+\\
			\le & \Big[\sum_{i=1}^k(\alpha_i-1)-(\lambda-1)\Big]^+
			=\Big(\sum_{i=1}^k \alpha_i-\lambda-k+1\Big)^+.
	\end{aligned}\end{equation*}
\end{proof}

To be concise, we will directly use Lemma \ref{lemma2.3} without  any citation  in our remaining computation since it  will be   used frequently.

The last lemma in this section is a summary from the proof of Lemma 2.4 in \cite{2020The}.
\begin{Lemma}\label{lemma2.4}
	Denote integers $l\ge 0$, $k\ge 1$ and $\lambda \in \{2,3\}$. Then there exist a constant $S_\lambda>1$ only depending on $\lambda$ such that
	\begin{equation}\label{Sum}
		S[k,l;\lambda]\triangleq\sum_{\substack{\alpha_1+\cdots+\alpha_k=l,\\ \alpha_1,\cdots,\alpha_k\in\mathbb{Z}_0^+}} \frac{(\alpha_1-\lambda)^+!\cdots(\alpha_k-\lambda)^+!}{\alpha_1!\cdots \alpha_k!}
		\le (k S_\lambda)^k \frac{(l-\lambda)^+!}{l!},
	\end{equation}
	where $\mathbb{Z}_0^+$ denotes the set of all nonnegative integers.
\end{Lemma}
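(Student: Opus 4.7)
I would argue by induction on $k$, reducing the inductive step to a single convolution inequality. The base case $k=1$ is immediate because $S[1,l;\lambda]=(l-\lambda)^+!/l!$, which matches the right-hand side of \eqref{Sum} for any $S_\lambda\ge 1$.

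For the inductive step, fix $k\ge 1$ and isolate the last index:
\[
S[k+1,l;\lambda]=\sum_{\alpha=0}^{l}\frac{(\alpha-\lambda)^+!}{\alpha!}\,S[k,l-\alpha;\lambda].
\]
Applying the inductive hypothesis to $S[k,l-\alpha;\lambda]$ bounds this by $(kS_\lambda)^k\,T_\lambda(l)$, where
\[
T_\lambda(l):=\sum_{\alpha=0}^{l}\frac{(\alpha-\lambda)^+!\,(l-\alpha-\lambda)^+!}{\alpha!\,(l-\alpha)!}.
\]
It therefore suffices to prove a convolution estimate of the form $T_\lambda(l)\le C_\lambda\,(l-\lambda)^+!/l!$ with $C_\lambda$ depending only on $\lambda$. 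Given this, the induction closes by choosing $S_\lambda$ so that $C_\lambda(kS_\lambda)^k\le((k+1)S_\lambda)^{k+1}$ for every $k\ge 1$; equivalently $C_\lambda\le S_\lambda(k+1)(1+1/k)^k$, and since $(k+1)(1+1/k)^k$ is increasing in $k$ with minimum value $4$ attained at $k=1$, it suffices to take $S_\lambda:=\max\{2,\,C_\lambda/4\}>1$.

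To prove the convolution bound, the starting point is the identity
\[
\frac{(\alpha-\lambda)!}{\alpha!}=\frac{1}{\alpha(\alpha-1)\cdots(\alpha-\lambda+1)}\qquad(\alpha\ge\lambda),
\]
so on the bulk $\lambda\le\alpha\le l-\lambda$ the summand is of order $\alpha^{-\lambda}(l-\alpha)^{-\lambda}$. Splitting the bulk at $\alpha=l/2$ and using $l-\alpha\ge l/2$ on the lower half together with the convergence of $\sum_\alpha\alpha^{-\lambda}$ (valid because $\lambda\ge 2$), each bulk half contributes $O(l^{-\lambda})$. On the two edge pieces $\alpha\in\{0,\dots,\lambda-1\}$ and $l-\alpha\in\{0,\dots,\lambda-1\}$ only boundedly many terms appear, and each is of order $l^{-\lambda}$ by the same reasoning applied to the surviving factor. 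Since the right-hand side $(l-\lambda)^+!/l!$ is itself of order $l^{-\lambda}$ for large $l$, these estimates combine to produce the required $C_\lambda$; the finitely many small values $l\le 2\lambda$ are handled by direct inspection and absorbed into the constant.

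\textbf{Main obstacle.} The one delicate point is keeping all constants uniform across the jump of $\alpha\mapsto(\alpha-\lambda)^+!/\alpha!$ at $\alpha=\lambda$: the edge and bulk regimes must be combined into a single constant $C_\lambda$ independent of both $l$ and $k$, because the inductive argument demands $S_\lambda$ depend only on $\lambda$. Nothing deep happens here, but careless bookkeeping would introduce a hidden $\log$ factor or a $k$-dependence that would break the geometric growth required by \eqref{Sum}.
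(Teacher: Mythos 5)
Your proposal is correct, and it takes a genuinely different route from the paper. The paper inducts on $l$ with $k$ arbitrary: fixing $p$, it assumes \eqref{Sum} for all $l\le p$ and all $k$, picks up a factor $k$ by noting at least one $\alpha_i\ge 1$, peels off $\alpha_1\ge 1$ to land at $S[k-1,p+1-\alpha;\lambda]$ with smaller $l$, and then estimates the resulting single sum $\sum_\alpha \alpha^{-\lambda}(p+1-\alpha)^{-\lambda}$ by splitting at the midpoint. You instead induct on $k$ alone: the convolution identity $S[k+1,l;\lambda]=\sum_{\alpha}\frac{(\alpha-\lambda)^+!}{\alpha!}S[k,l-\alpha;\lambda]$ reduces the inductive step to the single two-variable bound $T_\lambda(l)=S[2,l;\lambda]\le C_\lambda\,(l-\lambda)^+!/l!$, and the inductive step then closes by the purely algebraic observation that $(kS_\lambda)^kC_\lambda\le((k+1)S_\lambda)^{k+1}$ once $S_\lambda\ge C_\lambda/4$, using that $(k+1)(1+1/k)^k$ is increasing with minimum $4$. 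The underlying analytic core is the same in both — a midpoint split plus the convergence of $\sum\alpha^{-\lambda}$ for $\lambda\ge 2$ — but your version cleanly factors the argument into an isolated $k=2$ estimate and a transparent algebraic induction in $k$, which is arguably easier to audit than the paper's intertwined double induction; the trade-off is that you carry the full $T_\lambda$ estimate (including the edge terms $\alpha<\lambda$, $l-\alpha<\lambda$) as a separate preliminary, where the paper folds those directly into the running sum. Your closing caution about uniformity of constants is well placed, and your constant bookkeeping ($S_\lambda=\max\{2,C_\lambda/4\}$) is sound.
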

\begin{proof}
	At first, Let us consider the case $\lambda=2$. We prove \eqref{Sum} by induction on $l$. Obviously,  it holds for $l=0, 1$ since
	$$S[k,0;2]=1\le k^k,~S[k,1;2]=\sum_{\alpha_1+\cdots+\alpha_k=1} 1=k\le k^k.$$
 Then, we choose an arbitrary integer $p\ge 1$ and assume \eqref{Sum} holds for all $1\le l\le p$ and $k\ge 1$. Now we prove the case for $l = p + 1$. Noting that $S[1,l;2]=\frac{(l-2)^+!}{l!}$, we only need to consider $k\ge 2$. By induction hypotheses, we have
	\begin{equation}\begin{aligned}
		S[k,p+1;2] &\le k \sum_{\substack{\alpha_1+\cdots+\alpha_k=p+1,\\ \alpha_1\ge 1}} \frac{(\alpha_1-2)^+!\cdots(\alpha_k-2)^+!}{(\alpha_1!)\cdots (\alpha_k!)}\\
		&\le k \sum_{\alpha=1}^{p+1} \frac{(\alpha-2)^+!}{\alpha!}S[k-1,p+1-\alpha;2]\\
		&= k \sum_{\alpha=1}^{p} \frac{(\alpha-2)^+!}{\alpha!}S[k-1,p+1-\alpha;2]+k
		\frac{(p-1)!}{(p+1)!}S[k-1,0;2]\\
		&\le 2k \sum_{\alpha=1}^{p} \frac{[(k-1)S_2]^{k-1}}{\alpha^2(p+1-\alpha)^2}
		+\frac{k}{p(p+1)}\\
		&= 2k [(k-1)S_2]^{k-1} \Big(\sum_{\alpha=1}^{[\frac{p+1}{2}]} \frac{1} 
		+\sum_{\alpha=[\frac{p+1}{2}]+1}^{p}\Big) \frac{1}{\alpha^2(p+1-\alpha)^2} +\frac{k}{p(p+1)}\\
		&\le 2k [(k-1)S_2]^{k-1} \frac{4}{(p+1)^2} \Big(\sum_{\alpha=1}^{[\frac{p+1}{2}]} \frac{1}{\alpha^2}
		+\sum_{\alpha=[\frac{p+1}{2}]+1}^{p} \frac{1}{(p+1-\alpha)^2}\Big)+\frac{k}{p(p+1)}\\
		&\le 2k [(k-1)S_2]^{k-1} \frac{8}{(p+1)^2} \sum_{\alpha=1}^{\infty} \frac{1}{\alpha^2}+\frac{k}{p(p+1)}\\
		&=\frac{8\pi^2}{3}\frac{k^k S_2^{k-1}}{(p+1)^2} + \frac{k}{p(p+1)}.
	\end{aligned}\end{equation}
	Hence, we take $S_2 = \frac{8\pi^2}{3} + 2$ and then obtain $S[k, p+1;2]\le\frac{(k S_2)^k}{p(p+1)}$. By induction, \eqref{Sum} is proved for the case $\lambda=2$.
	
	The proof for the case $\lambda=3$ is almost the same. We only need to notice that $\sum\limits_{\alpha=1}^{\infty} \frac{1}{\alpha^3}< +\infty$.
\end{proof}

	
\section {  Tangential analyticity and linearized elliptic equations}\label{sec3}

In this section, we will state the tangential analyticity of smooth convex solutions to \eqref{Eq} and then introduce the linearized problem of \eqref{Eq}. First, we claim that if $0<r<1$ is small enough, $u$ is tangentially analytic in $\overline{G_r}$:
\begin{Theorem}\label{TAnalytic}
	Under the assumptions in Theorem \ref{thm1.1}, there exist positive constants $C_0, C_1$ depending on $u, \varphi, r, n$ such that for any nonnegative integer $l$ and $x=(x',x_n)\in \overline{G_r}$,
	\begin{align}
		\label{S1}&|D_{x'}^l (u-\varphi)| \le C_0 C_1^{(l-2)^+} (l-2)^+!(r-|x'|)^{-(l-2)^+} x_n^2,\\
		\label{S2}&|D D_{x'}^l (u-\varphi)| \le C_0 C_1^{(l-2)^+} (l-2)^+!(r-|x'|)^{-(l-2)^+} x_n,\\
		\label{S3}&|D^2 D_{x'}^l (u-\varphi)| \le C_0 C_1^{(l-2)^+} (l-2)^+!(r-|x'|)^{-(l-2)^+}.
	\end{align}
\end{Theorem}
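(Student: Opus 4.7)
The plan is to fix $r > 0$ small enough that $u$ is smooth on $\overline{G_r}$ and to prove \eqref{S1}--\eqref{S3} simultaneously by strong induction on the tangential order $l$. The base cases $l = 0, 1, 2$ follow from $u \in C^\infty(\overline{\mathbb{R}^n_+})$ once one observes the boundary structure of $w := u - \varphi$: combining $u(x',0) = \varphi(x')$ with Lemma \ref{lemma2.1} yields
\[
    D_{x'}^l w(x', 0) = 0 \quad \text{and} \quad D_n D_{x'}^l w(x', 0) = 0 \quad \text{for every } l \ge 0,
\]
so each $D_{x'}^l w$ vanishes at least to second order in $x_n$, matching the profile of \eqref{S1}--\eqref{S3}.

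For the inductive step I would fix a tangential multi-index $\alpha$ of length $l$ and differentiate \eqref{Eq} by $D_{x'}^\alpha$. Using $\partial\det/\partial u_{ij} = U^{ij}$ (the cofactor matrix of $D^2 u$) and extracting the leading term in the Leibniz expansion produces the linearized identity
\[
    U^{ij} D_{ij}\bigl(D_{x'}^\alpha w\bigr) = D_{x'}^\alpha\Bigl[\bigl(\tfrac{D_n u}{x_n}\bigr)^{n+2}\Bigr] - R_\alpha[u] - U^{ij} D_{ij} D_{x'}^\alpha \varphi,
\]
in which the operator $L := U^{ij} D_{ij}$ is uniformly elliptic on $\overline{G_r}$ by \eqref{2.5}, and $R_\alpha[u]$ is a finite sum of products of tangential derivatives of entries of $D^2 u$ with each factor of tangential order strictly less than $l$. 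Since $D_n u / x_n$ extends smoothly to $\overline{G_r}$ with boundary value $u_{nn}(\cdot,0)$ bounded below by Lemma \ref{lemma2.2}, tangential derivatives of the right-hand side can be expanded, via Fa\`a di Bruno and the representation $D_n u(x',x_n) = \int_0^{x_n} D_{nn} u(x',t)\,dt$, into sums of products of quantities already controlled by the inductive hypothesis on $D_{x'}^{l_j} w$ and by the analyticity of $\varphi$; Lemma \ref{lemma2.4} with $\lambda = 2$ then collapses these sums into a single bound carrying the prescribed factorial $(l-2)^+!$ and weight $(r - |x'|)^{-(l-2)^+}$, provided $C_1$ is chosen large enough.

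To pass from a bound on the forcing term back to the three pointwise estimates, the plan is to apply boundary Schauder (or $W^{2,p}$) estimates for $L$ on scaled subdomains $G_\rho(x'_0) \subset G_r$ with $\rho \sim r - |x'_0|$, exploiting the homogeneous Dirichlet--Neumann behaviour of $D_{x'}^\alpha w$ at $\{x_n = 0\}$ derived above. This yields \eqref{S3} directly; \eqref{S1} and \eqref{S2} then follow by the double integration
\[
    D_{x'}^\alpha w(x', x_n) = \int_0^{x_n}\!\int_0^{s} D_{nn} D_{x'}^\alpha w(x', t)\, dt\, ds,
\]
which transfers the $L^\infty$ bound on $D_{nn} D_{x'}^\alpha w$ to bounds on $D_{x'}^\alpha w$ and $D D_{x'}^\alpha w$ carrying the extra factors $x_n^2$ and $x_n$. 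The main obstacle I anticipate is simultaneously tracking the two independent smallness parameters, namely the analyticity weight $(r-|x'|)^{-(l-2)^+}(l-2)^+!$ and the boundary-vanishing factor $x_n^{2-k}$, while keeping $C_1$ fixed: the combinatorial sum from Lemma \ref{lemma2.4} must beat a $1/l^2$ loss to close the recursion, and the Schauder constants on the shrinking subdomains must be arranged not to accumulate over the induction. Balancing these to fix $C_0, C_1$ once and for all is the technical heart of Section \ref{tan}.
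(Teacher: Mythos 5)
Your plan misses the way the paper deals with the singular leading term, and as a result the inductive loop does not close. When you differentiate $\det D^2 u = (D_n u / x_n)^{n+2}$ by $D_{x'}^\alpha$ and move only the principal part $U^{ij}D_{ij}D_{x'}^\alpha w$ to the left, the quantity $D_{x'}^\alpha\bigl[(D_n u/x_n)^{n+2}\bigr]$ that you leave on the right contains, as its top-order term,
\[
(n+2)\Bigl(\frac{D_n u}{x_n}\Bigr)^{n+1}\cdot\frac{D_n D_{x'}^\alpha w}{x_n},
\]
since $D_n D_{x'}^\alpha \varphi \equiv 0$. This involves the full $l$-th order tangential derivative of $w$ divided by $x_n$, and it is \emph{not} controlled by the inductive hypothesis on lower orders; indeed bounding it is essentially the estimate you are trying to prove. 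The paper's essential move is to keep this term on the left, i.e.\ to work with the operator $Lw = U^{ij}w_{ij} + P\,\tfrac{w_n}{x_n}$ with $P=-(n+2)(D_n u/x_n)^{n+1}$, and to exploit that $P$ is bounded strictly below $-1$: then $U^{nn}+P<0$, which is exactly what makes the barrier $\bar w = A|x'-x'_0|^2 + Bx_n^2$ a supersolution in Part~1 and delivers \eqref{S1} directly with the crucial $x_n^2$ weight.

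Your proposed ordering --- prove \eqref{S3} via Schauder, then integrate twice in $x_n$ to get \eqref{S2} and \eqref{S1} --- is also problematic for two reasons. First, the Schauder estimate for $U^{ij}D_{ij}$ alone requires an $L^\infty$ bound on $D_{x'}^\alpha w$ on the scaled ball, which is \eqref{S1} at order $l$, so the argument is circular unless one first establishes \eqref{S1}; that is precisely why the paper's Parts~2 and~3 feed off the barrier result of Part~1. Second, boundary Schauder estimates at $\{x_n=0\}$ for the correct operator $L$ are not available in any standard form because of the $1/x_n$ singularity in the coefficient; the paper instead rescales at scale $\rho=x_n$ (where the coefficient $P/x_n$ becomes bounded after scaling) and uses interior $C^{1,\alpha}$ and $C^{2,\alpha}$ estimates, supplemented by the iteration Lemma~\ref{lemma4.1} of Friedman type to cover the regime $x_n\ge \tfrac{1}{p}(r-|x'|)$. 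The combinatorial bookkeeping via Lemma~\ref{lemma2.4} that you flag is indeed needed, but the main missing idea in your proposal is the barrier construction for the degenerate operator and the resulting order of derivation \eqref{S1}~$\Rightarrow$~\eqref{S2}~$\Rightarrow$~\eqref{S3}.
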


Once Theorem \ref{TAnalytic} is proved, we can obtain derivative estimates of $u$ in $\overline{G_{r/2}}$ from \eqref{S1}-\eqref{S3}. By replacing the value of $r$ by $\frac{r}{2}$, we prove that $u$ and its derivatives up to order $2$ are tangentially analytic in $\overline{G_r}$.

We will prove Theorem \ref{TAnalytic} by induction on the nonnegative integer $l$. First, we prove the case for $0\le l\le 2$. By Lemma \ref{lemma2.1} and Taylor's formula, we have
\begin{align*}
	&D_{x'}^l (u-\varphi)(x',x_n)=\frac{D_n^2 D_{x'}^l (u-\varphi)(x',\theta_1 x_n)}{2} \cdot x_n^2,~~0<\theta_1<1;\\
	&D_n D_{x'}^l (u-\varphi)(x',x_n)=D_n^2 D_{x'}^l (u-\varphi)(x',\theta_2 x_n) \cdot x_n,~~0<\theta_2<1.
\end{align*}
Therefore, there exist positive constants $C_0, C_1$ depending on $\|u-\varphi\|_{C^{4,1}(\overline{G_r})}$ and $r$ such that \eqref{S1}-\eqref{S3} hold for all $0\le l \le 2$ and $x\in G_r$. 

Then, we make {\sl the induction hypotheses for (3.1)-(3.3)}. That is, taking  an arbitrary integer $p\ge 2$ and assume \eqref{S1}-\eqref{S3} hold for all $0\le l \le p$. 
Note that these induction hypotheses will be used again and again in this and next sections. 
By induction, we only need to prove the case for $l=p+1$. Fix $k\in\{1,\cdots,n-1\}$ and denote $w := D_{x_k}(u-\varphi)$. Thus, it is equivalent  to prove that for any $x \in G_r$,
\begin{align}
	\label{SS1}&|D_{x'}^p w| \le C_0 C_1^{(p-1)^+} (p-1)^+!(r-|x'|)^{-(p-1)^+} x_n^2,\\
	\label{SS2}&|D D_{x'}^p w| \le C_0 C_1^{(p-1)^+} (p-1)^+!(r-|x'|)^{-(p-1)^+} x_n,\\
	\label{SS3}&|D^2 D_{x'}^p w| \le C_0 C_1^{(p-1)^+} (p-1)^+!(r-|x'|)^{-(p-1)^+}.
\end{align}

To prove \eqref{SS1}-\eqref{SS3}, we introduce the linearized problem of \eqref{Eq}.
By differentiating \eqref{Eq} in $x_k$, we obtain
\begin{equation}\label{linearized}
	\left\{ \begin{aligned}
		&U^{ij} w_{ij}+P \frac{w_n}{x_n}+N=0~~\mbox{in}~G_r, \\
		&w=0 ~~\mbox{on}~\partial G_r \cap \{x_n=0\},
	\end{aligned} \right.
\end{equation}
where $U^{ij}$ are elements of the cofactor matrix of $D^2 u$, $P=-(n+2)\big(\frac{D_n u}{x_n}\big)^{n+1}$ and $N=\sum_{i,j} U^{ij}\varphi_{kij}$. By the assumption \eqref{2.5} and continuity, we can choose small $0<r<1$ such that
\begin{equation}\label{rassume}
	\frac12\le\big(\frac{D_n u}{x_n}\big)^{n+1}\le\frac32,~~\frac12 I_n\le [U^{ij}]_{1\le i,j\le n}\le\frac32 I_n~~in~G_r.
\end{equation}
It follows that \eqref{linearized} is the Dirichlet problem of an   uniformly elliptic equation about $w$ in $G_r$, but with singular terms $P \frac{w_n}{x_n}$ and $N$. 

Furthermore, we differentiate \eqref{linearized} in $x'$ by $p$ times and obtain
\begin{equation}\label{ptimes}
	\left\{ \begin{aligned}
		&U^{ij} (D_{x'}^p w)_{ij}+P \frac{(D_{x'}^p w)_n}{x_n}+N_p=0~\mbox{in}~G_r, \\
		&D_{x'}^p w=0 ~\mbox{on}~\partial G_r \cap \{x_n=0\},
	\end{aligned} \right.
\end{equation}
where $N_p$ is given by
\begin{equation}\label{Np}
	N_p=D_{x'}^p N + \sum_{m=0}^{p-1} \binom{p}{m}\Big[\sum_{1\le i,j\le n}(D_{x'}^{p-m} U^{ij}) (D_{x'}^m w)_{ij} + D_{x'}^{p-m} P \frac{(D_{x'}^m w)_n}{x_n}\Big].
\end{equation}
Similarly, \eqref{ptimes} is also the Dirichlet problem of an   uniformly elliptic equation about $D_{x'}^p w$ in $G_r$ with singular terms $P \frac{(D_{x'}^p w)_n}{x_n}$ and $N_p$.  
 Moreover. it is easy to see that \eqref{SS1}-\eqref{SS3} are actually estimates of $D_{x'}^p w$ and its derivatives up to order 2.

 We prove \eqref{SS1}-\eqref{SS3} in the next section. As preparation, 
 in the remains of this section we will give  the  estimates for tangential derivatives  of the coefficients $U^{ij}$ and $P $  and the 
 estimate  of the coefficient$N_p$ in \eqref{ptimes}.
\begin{Lemma}\label{lemma3.2}
	Assume the  induction hypotheses  for (3.1)-(3.3).  There exist positive constants $C_0, C_1, \tilde{C_0}$ depending on $u,\varphi,r,n$ such that  the following estimates always hold for any $x\in G_r$ and integer $0\le l\le p$:
	\begin{align}
		\label{Uij}&|D_{x'}^l U^{ij}(x)| \le \tilde{C_0} C_1^{(l-2)^+} (l-2)^+!(r-|x'|)^{-(l-2)^+};\\
		\label{P}&|D_{x'}^l P(x)| \le \tilde{C_0} C_1^{(l-2)^+} (l-2)^+!(r-|x'|)^{-(l-2)^+}.
	\end{align}
\end{Lemma}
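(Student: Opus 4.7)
The plan is to write $U^{ij}$ and $P$ as polynomial expressions in quantities already controlled by the induction hypotheses (3.1)--(3.3), and to propagate these estimates through the multinomial Leibniz rule, summing partitions via Lemma \ref{lemma2.4}. Concretely, $U^{ij}$ is a polynomial of degree $n-1$ in the entries of $D^2u$ with $(n-1)!$ signed monomials, while $P$ is a pure $(n+1)$-st power of $f:=D_nu/x_n$ (up to the constant $-(n+2)$).

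As a preliminary step I would establish a unified bound
\begin{equation*}
|D_{x'}^l u_{ij}(x)|\le A\,C_1^{(l-2)^+}(l-2)^+!\,(r-|x'|)^{-(l-2)^+},\qquad 0\le l\le p,\ 1\le i,j\le n,
\end{equation*}
for some $A$ depending on $u,\varphi,r,n$. When one of $i,j$ equals $n$, the identity $D_n\varphi\equiv 0$ kills the $\varphi$ contribution and the bound follows from (3.3) verbatim. When $i,j\le n-1$, I would split $u_{ij}=(u-\varphi)_{ij}+\varphi_{ij}$: the first piece is again controlled by (3.3), and the analyticity of $\varphi$ gives $|D_{x'}^l\varphi_{ij}|\le M R^l\,l!$, which can be absorbed into the target form using $l!\le l^2(l-2)!$ (for $l\ge 2$) together with the standing reduction $r<1$ making $(r-|x'|)^{-(l-2)^+}\ge 1$, at the cost of enlarging $\tilde{C_0}$ and $C_1$.

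For $U^{ij}$ I would expand each of its $(n-1)!$ signed monomials $u_{a_1b_1}\cdots u_{a_{n-1}b_{n-1}}$ by the multinomial Leibniz rule with $n-1$ factors, apply the unified bound factor-by-factor, and pull out $C_1^{(l-2)^+}(r-|x'|)^{-(l-2)^+}$ via $\sum_k(l_k-2)^+\le(l-2)^+$ (Lemma \ref{lemma2.3}, inequality (2.7)). What remains is the partition sum
\begin{equation*}
l!\sum_{l_1+\cdots+l_{n-1}=l}\frac{(l_1-2)^+!\cdots(l_{n-1}-2)^+!}{l_1!\cdots l_{n-1}!},
\end{equation*}
which Lemma \ref{lemma2.4} with $\lambda=2$ and $k=n-1$ controls by $((n-1)S_2)^{n-1}(l-2)^+!$. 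Summing over the $(n-1)!$ monomials yields (3.10). For $P$ I would first observe that hypothesis (3.2) together with $D_n\varphi\equiv 0$ gives $|D_{x'}^l f(x)|\le A'\,C_1^{(l-2)^+}(l-2)^+!(r-|x'|)^{-(l-2)^+}$, since the factor $x_n$ on the right-hand side of (3.2) cancels the $1/x_n$ in $f=D_nu/x_n$; then the same multinomial Leibniz plus Lemma \ref{lemma2.4} argument, now with $n+1$ equal factors, delivers (3.11).

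The only real obstacle is combinatorial bookkeeping: making sure the partition-sum output of Lemma \ref{lemma2.4} meshes exactly with the factorial and $(r-|x'|)^{-\bullet}$ structure dictated by the induction hypothesis, so that $\tilde{C_0}$ and $C_1$ come out independent of $l$. The analyticity of $\varphi$ enters only through the one-time absorption in the unified bound; once that is arranged and the reduction $P\sim f^{n+1}$ via (3.2) is made, the two estimates run in perfect parallel, differing only in the number of factors ($n-1$ versus $n+1$) fed into Lemma \ref{lemma2.4}.
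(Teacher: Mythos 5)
Your proposal is correct and follows essentially the same route as the paper: both establish a unified second-derivative bound $|D^2D_{x'}^lu|\lesssim C_0C_1^{(l-2)^+}(l-2)^+!(r-|x'|)^{-(l-2)^+}$ by combining the induction hypothesis \eqref{S3} with the analyticity of $\varphi$, then expand $U^{ij}$ into $(n-1)!$ monomials of second derivatives and $P$ into an $(n+1)$-fold power of $D_nu/x_n$ (controlled via \eqref{S2} and $D_n\varphi\equiv 0$), and finally close the partition sums with the multinomial Leibniz rule plus Lemma~\ref{lemma2.4}. The only differences are cosmetic (you case-split on whether $i$ or $j$ equals $n$ and absorb $\varphi$ via $l!\le l^2(l-2)!$, whereas the paper folds everything into the single bound \eqref{D2u} using \eqref{Dvarphi}).
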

\begin{proof}
	(I) At first, by the analyticity of $\varphi$, we can choose proper $C_0, C_1$ such that for any integer $m \ge 2$ and $x\in\overline{G_r}$,
	\begin{equation}\label{Dvarphi}
		|D_{x'}^m \varphi| \le C_0 (C_1/r)^{(m-5)^+} (m-5)^+! \le C_0 (C_1/r)^{(m-4)^+} (m-4)^+!.
	\end{equation}
	For any $0\le l\le p$, by \eqref{Dvarphi} and the induction hypothesis for \eqref{S3}, we have
	\begin{equation}\label{D2u}
		|D^2 D_{x'}^l u| \le 2C_0 C_1^{(l-2)^+} (l-2)^+!(r-|x'|)^{-(l-2)^+}.
	\end{equation}
	By the definition of $U^{ij}$, we obtain
	\begin{equation}
		|D_{x'}^l U^{ij}| \le \sum |D_{x'}^l (u_{i_1 j_1}u_{i_2 j_2}\cdots u_{i_{n-1} j_{n-1}})|,
	\end{equation}
	where $(i_1,\cdots,i_{n-1})=(1,\cdots,i-1,\widehat{i},i+1,\cdots,n)$ and $(j_1,\cdots,j_{n-1})$  runs every permutation of the set $\{1,\cdots,j-1,\widehat{j},j+1,\cdots,n\}$.
	Now we fix $i,j,i_q, j_q(1\le q\le n-1)$ and only consider $D_{x'}^l (u_{i_1 j_1}\cdots u_{i_{n-1} j_{n-1}})$. Then by \eqref{D2u} and Lemma \ref{lemma2.4}, we obtain that for any $0\le l \le p$ and $x\in G_r$,
	\begin{equation}\begin{aligned}
		&|D_{x'}^l (u_{i_1 j_1}u_{i_2 j_2}\cdots u_{i_{n-1} j_{n-1}})| = \sum_{\alpha_1+\cdots+\alpha_{n-1}=l} \frac{l!}{\alpha_1!\cdots \alpha_{n-1}!} |D_{x'}^{\alpha_1}u_{i_1 j_1}|\cdots |D_{x'}^{\alpha_{n-1}}u_{i_{n-1} j_{n-1}}| \\
		&\le \sum_{\alpha_1+\cdots+\alpha_{n-1}=l} \frac{l!(\alpha_1-2)^+!\cdots(\alpha_{n-1}-2)^+!}{\alpha_1!\cdots \alpha_{n-1}!}
		(2C_0)^{n-1}[C_1/(r-|x'|)]^{(\alpha_1-2)^+\cdots+(\alpha_{n-1}-2)^+}\\
		&\le (2C_0)^{n-1}[C_1/(r-|x'|)]^{(l-2)^+} l!
		\sum_{\alpha_1+\cdots+\alpha_{n-1}=l} \frac{(\alpha_1-2)^+!\cdots(\alpha_{n-1}-2)^+!}{\alpha_1!\cdots \alpha_{n-1}!}\\
		&= (2C_0)^{n-1}[C_1/(r-|x'|)]^{(l-2)^+} l!
		S[n-1,l;2]\\
		&\le [2(n-1)C_0 S_2]^{n-1}[C_1/(r-|x'|)]^{(l-2)^+} (l-2)^+!.
	\end{aligned}\end{equation}
	Therefore,
	\begin{equation*}
		|D_{x'}^l U^{ij}|\le (n-1)![2(n-1)C_0 S_2]^{n-1}[C_1/(r-|x'|)]^{(l-2)^+} (l-2)^+!.
	\end{equation*}
	
	(II) We note that $u_n:=D_nu=D_n(u-\varphi) $. Similarly as (I), by the induction hypothesis for \eqref{S2} and Lemma \ref{lemma2.4}, we obtain that for any $0\le l \le p$ and $x\in G_r$,
	\begin{equation}\begin{aligned}
		\Big|\frac{1}{n+2} D_{x'}^l P\Big| &\le \sum_{\alpha_1+\cdots+\alpha_{n+1}=l} \frac{l!}{(\alpha_1!)\cdots (\alpha_{n+1}!)} \frac{|D_{x'}^{\alpha_1}u_n|}{x_n}\cdots \frac{|D_{x'}^{\alpha_{n+1}}u_n|}{x_n}\\
		&\le (C_0)^{n+1} [C_1/(r-|x'|)]^{(l-2)^+} l! S[n+1, l; 2]\\
		&\le [(n+1)C_0 S_2]^{n+1} [C_1/(r-|x'|)]^{(l-2)^+} (l-2)^+!.
	\end{aligned}\end{equation}
\end{proof}
\begin{Lemma}\label{lemma3.3}
	Under the same assumption  as Lemma \ref{lemma3.2}, there exists a positive constant $C_2$ depending on $u, \varphi, r, n$ such that for any $x\in G_r$,
	\begin{equation}
		|N_p| \le C_2 C_1^{p-2} (p-1)! (r-|x'|)^{-(p-2)}.
	\end{equation}
\end{Lemma}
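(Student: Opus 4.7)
The idea is to estimate the three parts of $N_p$ in \eqref{Np}---namely $D_{x'}^p N$ and the two sums over $m$---separately, using Lemma~\ref{lemma3.2} to control tangential derivatives of $U^{ij}$ and $P$, the induction hypotheses \eqref{S1}--\eqref{S3} to control tangential derivatives of $w=D_{x_k}(u-\varphi)$, the analyticity of $\varphi$, and finally Lemma~\ref{lemma2.4} to sum the convolution series generated by the Leibniz rule. Throughout we may assume $C_1\ge 1$ and $r<1$, so that the weights $C_1^{a}(r-|x'|)^{-a}$ are monotone in $a$.

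The prototype piece is $\sum_{m=0}^{p-1}\binom{p}{m}\sum_{i,j}(D_{x'}^{p-m}U^{ij})(D_{x'}^m w)_{ij}$. Since $1\le k\le n-1$, one has $|(D_{x'}^m w)_{ij}|\le|D^2 D_{x'}^{m+1}(u-\varphi)|$, and \eqref{S3} with $l=m+1\in\{1,\dots,p\}$ bounds this by $C_0 C_1^{(m-1)^+}(m-1)^+!(r-|x'|)^{-(m-1)^+}$. Paired with $|D_{x'}^{p-m}U^{ij}|\le \tilde C_0 C_1^{(p-m-2)^+}(p-m-2)^+!(r-|x'|)^{-(p-m-2)^+}$ from Lemma~\ref{lemma3.2} and invoking \eqref{2.8} with $\lambda=k=2$ and the positive pair $\alpha_1=p-m,\ \alpha_2=m+1$, the combined exponent $(p-m-2)^++(m-1)^+$ is at most $p-2$. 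The substitution $m=\alpha_2-1$ then converts the sum over $m$ into a weighted $k=2$ convolution at level $p+1$, bounded by $p!\sum_{\alpha_1+\alpha_2=p+1}\alpha_2\,(\alpha_1-2)^+!(\alpha_2-2)^+!/(\alpha_1!\alpha_2!)\le (p+1)\,p!\,S[2,p+1;2]$; by Lemma~\ref{lemma2.4}, $S[2,p+1;2]\le (2S_2)^2(p-1)!/(p+1)!$, producing the desired prefactor $(p-1)!$.

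The third group $\sum_{m=0}^{p-1}\binom{p}{m}D_{x'}^{p-m}P\cdot (D_{x'}^m w)_n/x_n$ is handled identically, provided $(D_{x'}^m w)_n/x_n$ satisfies the same bound as $(D_{x'}^m w)_{ij}$. For this, Lemma~\ref{lemma2.1} applied to $u$, together with $\varphi$'s independence of $x_n$, yields $D_n D_{x'}^{m+1}(u-\varphi)(x',0)=0$, so Taylor expansion in $x_n$ gives $|(D_{x'}^m w)_n/x_n|\le \sup_{0<\theta<1}|D_n^2 D_{x'}^{m+1}(u-\varphi)(x',\theta x_n)|$, and \eqref{S3} at $(x',\theta x_n)\in G_r$ supplies the desired bound. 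Finally, $D_{x'}^p N=\sum_{i,j}\sum_{\alpha=0}^p\binom{p}{\alpha}(D_{x'}^\alpha U^{ij})(D_{x'}^{p-\alpha}\varphi_{kij})$ reduces, since $\varphi_{kij}=0$ whenever $i$ or $j$ equals $n$, to a convolution involving tangential derivatives of $\varphi$ only; the analyticity of $\varphi$ provides an estimate of the same analytic type for $D_{x'}^{p-\alpha}\varphi_{kij}$, and identical convolution bookkeeping via Lemma~\ref{lemma2.4} closes the bound.

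The main obstacle is obtaining the sharp factor $(p-1)!$ rather than the naive $p!$. This is achieved by two sharpenings: using \eqref{2.8} with $k=2$ and strictly positive indices, which absorbs one unit via the $-k+1=-1$ correction, and applying Lemma~\ref{lemma2.4} at level $l=p+1$ (not $l=p$) so that the $(p+1)!$ in its denominator cancels exactly against $p!(p+1)$ coming from the binomial coefficients together with the extra factor of $\alpha_2$. A secondary but essential point is the Taylor-expansion argument anchored by Lemma~\ref{lemma2.1}, which reduces the singular boundary factor $(D_{x'}^m w)_n/x_n$ to a bounded second derivative, thereby absorbing the singularity of the linearized equation into the regular analytic bookkeeping.
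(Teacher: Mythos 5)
Your proof is correct and follows the paper's strategy in outline: the same three-piece decomposition of $N_p$ from~\eqref{Np}, the same inputs (Lemma~\ref{lemma3.2} for the coefficients, the induction hypotheses~\eqref{S1}--\eqref{S3} for $w$, and the combinatorial Lemmas~\ref{lemma2.3}--\ref{lemma2.4}). Where you genuinely diverge is the combinatorial bookkeeping that produces the crucial $(p-1)!$ instead of $p!$. After re-indexing, the paper faces the sum $\sum_{m=1}^{p} p!(m-2)^+!(p-m-1)^+!/(m!(p-m)!)$ and proves the standalone inequality~\eqref{3.21} bounding it by $5(p-1)!$, via a hands-on split of the endpoint terms and a telescoping estimate of the interior. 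You instead substitute $\alpha_1=p-m$, $\alpha_2=m+1$, rewrite $1/(\alpha_2-1)! = \alpha_2/\alpha_2!$, bound the extra factor $\alpha_2$ crudely by $p+1$, and then appeal to Lemma~\ref{lemma2.4} at the shifted level $l=p+1$: the $(p+1)!$ in its denominator cancels the $(p+1)\cdot p!$ coming from the binomials and the crude bound, leaving $(2S_2)^2(p-1)!$. Both routes deliver the same factorial gain; the paper's~\eqref{3.21} yields the sharper numerical constant, but that is immaterial since everything is absorbed into $C_2$. A second, minor, difference: for the $P$-term you control $(D_{x'}^m w)_n/x_n$ by Taylor-expanding $D_n D_{x'}^{m+1}(u-\varphi)$ in $x_n$ around $x_n=0$ using Lemma~\ref{lemma2.1}, and then invoking~\eqref{S3} at the intermediate point; the paper simply quotes the induction hypothesis~\eqref{S2}, which already carries the factor $x_n$. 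Your Taylor detour is valid but re-derives what~\eqref{S2} provides directly.
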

\begin{proof}
	By \eqref{Np}, we write $$N_p \triangleq I_1 + I_2 + I_3$$ and estimate each term  respectively for any given $x\in G_r$.
		
	(1) It follows from \eqref{Dvarphi}, (3.11) and  Lemma  2.4 that
	
	\begin{equation}\begin{aligned}
		|I_1| &= |D_{x'}^p N| = |\sum_{i,j=1}^n D_{x'}^p (U^{ij}\varphi_{kij})|\\
		&\le \sum_{i,j=1}^n \sum_{m=0}^{p} \binom{p}{m} |D_{x'}^m U^{ij}| |D_{x'}^{p-m+3} \varphi|\\
		&\le n^2 \sum_{m=0}^{p} \frac{p!(m-2)^+!(p-m-2)^+!}{m!(p-m)!} \tilde{C_0}C_0[C_1/(r-|x'|)]^{(m-2)^+ +(p-m-2)^+}\\
		&\le n^2 \tilde{C_0} C_0 [C_1/(r-|x'|)]^{p-2} p! \sum_{m=0}^{p} \frac{(m-2)^+!(p-m-2)^+!}{m!(p-m)!}\\
		&=n^2 \tilde{C_0} C_0 [C_1/(r-|x'|)]^{p-2} p! S[2, p;2]\\
		&\le 4 n^2 S_2^2 \tilde{C_0} C_0 [C_1/(r-|x'|)]^{p-2} (p-2)!\\
		&\le 4 n^2 S_2^2 \tilde{C_0} C_0 [C_1/(r-|x'|)]^{p-2} (p-1)!.
	\end{aligned}\end{equation}
		
	(2) By induction hypotheses for \eqref{S3} and Lemma \ref{lemma3.2}, we have
	\begin{equation}\label{3.20}\begin{aligned}
		|I_2|&\le \sum_{i,j=1}^{n}\sum_{m=1}^{p}\binom{p}{m}|D_{x'}^{m}U^{ij}||D_{x'}^{p-m}(u-\varphi)_{kij}|\\
		&\le n^2 \tilde{C_0} C_0 \sum_{m=1}^{p} [C_1/(r-|x'|)]^{(m-2)^+ +(p-m+1-2)^+} \cdot \frac{p!(m-2)^+!(p-m+1-2)^+!}{m!(p-m)!}\\
		&= n^2 \tilde{C_0} C_0 \sum_{m=1}^{p} [C_1/(r-|x'|)]^{[(m-1)-1]^+ +[(p-m)-1]^+} \cdot \frac{p!(m-2)^+!(p-m-1)^+!}{m!(p-m)!}\\
		&\le n^2 \tilde{C_0} C_0 [C_1/(r-|x'|)]^{p-2} \sum_{m=1}^{p} \frac{p!(m-2)^+!(p-m-1)^+!}{m!(p-m)!}.
	\end{aligned}\end{equation}
	Here, we claim that for any $p \ge 2$,
\begin{equation}\label{3.21}
	 \sum_{m=1}^{p} \frac{p!(m-2)^+!(p-m-1)^+!}{m!(p-m)!}\le 5(p-1)!.
	\end{equation} 

In fact, if $p=2$, it is easy to verify
	\[\sum_{m=1}^{p} \frac{p!(m-2)^+!(p-m-1)^+!}{m!(p-m)!} = 3 <5 = 5(p-1)!.\]
	If $p\ge 3$, we have
	\begin{equation*}\begin{aligned}
		&\sum_{m=1}^{p} \frac{p!(m-2)^+!(p-m-1)^+!}{m!(p-m)!} \\
		=& \frac{p!}{p-1}+(p-2)! +\sum_{m=2}^{p-1} \frac{p!}{m(m-1)(p-m)}\\
		\le& 2(p-1)!+(p-1)!+\sum_{m=2}^{p-1} \frac{(p-1)!}{m-1}\cdot\frac{p}{m(p-m)}\\
		=& (p-1)!\Big[3+\sum_{m=2}^{p-1} \frac{1}{m-1}(\frac{1}{m}+\frac{1}{p-m})\Big]\\
		\le& (p-1)!\Big[3+ \sum_{m=2}^{p-1}(\frac{1}{m-1}-\frac{1}{m}) + \sum_{m=2}^{p-1} \frac{1}{p-2}\Big]\\
		\le& 5(p-1)!.
	\end{aligned}\end{equation*}

	Hence, by  \eqref{3.20} and (3.21), we obtain
	\begin{equation}
		|I_2|\le 5 n^2 \tilde{C_0} C_0 [C_1/(r-|x'|)]^{p-2} (p-1)!.
	\end{equation}
	
	(3) Similarly as (2), we use (3.12), the induction hypothesis for \eqref{S2} and (3.21) to obtain
	
	\begin{equation}\begin{aligned}
		|I_3|&\le \sum_{m=1}^p \binom{p}{m} |D_{x'}^m P|\Big|\frac{D_n D_{x'}^{p-m} (u-\varphi)_k}{x_n}\Big|\\
		&\le \tilde{C_0} C_0 \sum_{m=1}^{p} [C_1/(r-|x'|)]^{(m-2)^+ +(p-m+1-2)^+} \cdot \frac{p!(m-2)^+!(p-m+1-2)^+!}{m!(p-m)!}\\
		&\le 5 \tilde{C_0} C_0 [C_1/(r-|x'|)]^{p-2} (p-1)!.
	\end{aligned}\end{equation}
	
	Since $|N_p|\le |I_1|+|I_2|+|I_3|$, we complete the proof of Lemma 3.3.
\end{proof}

\section{proof of Theorem \ref{TAnalytic}}\label{tan}
	
In this section, we will finish the proof of Theorem \ref{TAnalytic} under the induction hypotheses for (3.1)-(3.3) in the last section. As shown in Section \ref{sec3}, it is sufficient to prove that \eqref{SS1}-\eqref{SS3} hold for all $x\in G_r$.  motivated by  the proof of Theorem 2.1 in \cite{2018The} we will  divide our proof into four parts.  As the notations such as $c_0, c_1$ and $c_2$,  
 we will use the notations $c_i (0\le i\le 7) $ to represent positive constants only depending on $u, \varphi, r, n$.
	
{\bfseries Part 1. }\;  We first prove \eqref{SS1} for any $x\in G_r$. For convenience, we define an operator $L$ by $$Lw\triangleq U^{ij}w_{ij}+P\frac{w_n}{x_n}.$$
For any fixed $x'_0 \in B'_r(0)$, we denote $\rho = \frac{1}{p} (r-|x'_0|)$ and $\bar{w}(x)=A|x'-x'_0|^2+Bx_n^2$, where $A, B$ are positive constants to be determined later.  By computation, we have
\begin{align}
	\label{4.1}&L\bar{w}=2A\sum_{i<n} U^{ii}+2B(U^{nn}+P)~in~G_\rho(x'_0),\\
	&\bar{w}\ge 0~on~\partial G_\rho(x'_0)\cap\{x_n=0\},\\
	&\bar{w}\ge B\rho^2~on~\partial G_\rho(x'_0)\cap\{x_n=\rho\},\\
	&\bar{w}\ge A\rho^2~on~\partial G_\rho(x'_0)\cap\{0<x_n<\rho\}.
\end{align}
It follows from \eqref{4.1} and \eqref{rassume} that
\begin{equation}
	L\bar{w}\le 2A(n-1)\frac{3}{2}+2B[\frac{3}{2}-\frac 12 (n+2)]=-(n-1)(B-3A).
\end{equation}
Note that $|x'|\le |x'-x'_0|+|x'_0|< \rho+|x'_0|$ and $0< x_n < \rho$ for any $x\in G_{\rho}(x'_0)$. By induction hypotheses for \eqref{S2}, we obtain
\begin{equation}\label{4.6}\begin{aligned}
		&|D_{x'}^p w|\leq |D D_{x'}^{p} (u-\varphi)|\le C_0 C_1^{p-2} (p-2)!(r-|x'|)^{-(p-2)} x_n\\
		&\le C_0 C_1^{p-2} (p-2)!(r-|x'_0|-\rho)^{-(p-2)}\cdot\rho\\
		&=C_0 C_1^{p-2} (p-2)! (1-\frac{1}{p})^{-(p-2)} (r-|x'_0|)^{-(p-2)}\cdot\rho^{-1}\cdot\rho^2\\
		&=C_0 C_1^{p-2} (p-2)! (1+\frac{1}{p-1})^{p-2} (r-|x'_0|)^{-(p-2)}\cdot p (r-|x'_0|)^{-1} \cdot\rho^2\\
		&=C_0 C_1^{p-2} (p-1)! (1+\frac{1}{p-1})^{p-1} (r-|x'_0|)^{-(p-1)}\cdot\rho^2\\
		&\le e C_0 C_1^{p-2} (p-1)!(r-|x'_0|)^{-(p-1)}\cdot\rho^2~~in~G_\rho(x'_0).
\end{aligned}\end{equation}

Similarly, by Lemma \ref{lemma3.3}, we have
\begin{equation}\label{4.7}
	|N_p|\le C_2 C_1^{p-2}(p-1)!(r-|x'_0|-\rho)^{-(p-2)}\le e C_2C_1^{p-2}(p-1)!(r-|x'_0|)^{-(p-2)}
\end{equation}
in $G_{\rho}(x'_0)$.
Now we choose $A=e C_0 C_1^{p-2} (p-1)!(r-|x'_0|)^{-(p-1)}$ and  $B\geq A$ such that $$(n-1)(B-3A)=e C_2C_1^{p-2}(p-1)!(r-|x'_0|)^{-(p-2)}.$$
Therefore, combining \eqref{ptimes} with \eqref{4.1}-\eqref{4.7}, we have obtained that $\bar{w}$ (or $-\bar{w}$) is a supersolution (or subsolution) of $D_{x'}^p w$ with respect to the operator $L$ in $G_{\rho}(x'_0)$. So if $t < \rho$, we have $(x'_0,t)\in \overline{G_{\rho}(x'_0)}$ and
\begin{equation}\label{4.8}
	|D_{x'}^p w(x'_0,t)| \le \bar{w}(x'_0,t)=Bt^2\le e(3C_0+\frac{C_2}{n-1})C_1^{p-2}(p-1)!(r-|x'_0|)^{-(p-1)}t^2.
\end{equation}
From the proof of Lemma 3.3 we see that the value of $C_2$ only depends on $C_0$ and $\tilde{c_0}$. Hence, taking $C_1 \ge e(3+\frac{C_2}{(n-1)C_0})$  we obtain from \eqref{4.8} that
\begin{equation}
	|D_{x'}^p w(x'_0,t)| \le C_0 C_1^{p-1}(p-1)!(r-|x'_0|)^{-(p-1)}t^2.
\end{equation}
On the other hand, if $t \ge \rho = \frac{1}{p}(r-|x'_0|)$, by the induction hypothesis for (3.2) we also have
\begin{equation}\begin{aligned}
	&|D_{x'}^p w(x'_0,t)|\leq |D D_{x'}^{p} (u-\varphi)(x'_0,t)|
	\le C_0 C_1^{p-2} (p-2)!(r-|x'_0|)^{-(p-2)} \frac{1}{t}t^2 \\
	&\le C_0 C_1^{p-2}p(p-2)!(r-|x'_0|)^{-(p-1)}t^2
	= C_0 C_1^{p-1}(p-1)!(r-|x'_0|)^{-(p-1)}t^2,
\end{aligned}\end{equation}
where we have used the fact $p\le 2(p-1)$ and  $C_1 \ge 2$.
 In this way, we finish the proof of \eqref{SS1}.

{\bfseries Part 2.} \; In this part, we prove \eqref{SS2} for any $x\in G_r$. For any given $x_0=(x'_0,t)\in G_r$, we assume $\rho = t$. If $\rho \ge \frac{1}{p}(r-|x'_0|)$, we obtain by 
the induction hypothesis for \eqref{S3} that
\begin{equation}\begin{aligned}
	|DD_{x'}^{p}w(x_0)| &\leq |D^2 D_{x'}^p (u-\varphi)(x_0)|\\
	&\le C_0 C_1^{p-2} (p-2)! (r-|x'_0|)^{-(p-2)}\frac{1}{\rho} t\\
	&\le C_0 C_1^{p-2} \frac{p}{p-1} (p-1)! (r-|x'_0|)^{-(p-1)} t\\
	&\le 2C_0 C_1^{p-2} (p-1)! (r-|x'_0|)^{-(p-1)} t.
\end{aligned}\end{equation}
Taking $C_1>2$, we see that \eqref{SS2} holds for all $x_0=(x'_0,t) \in G_r$ such that $t=\rho \ge \frac{1}{p}(r-|x'_0|)$. Now we assume $\rho < \frac{1}{p}(r-|x'_0|)$. Then we have $B_{\rho}(x_0) \subset G_r$. Denote $v^p=\frac{1}{\rho^2}(D_{x'}^p w)(\rho x + x_0)$. It follows from \eqref{ptimes} that $v^p$ satisfies
\begin{equation}\label{scalingEq}
	U^{ij}(\rho x + x_0) v^p_{ij}(x) + \rho\frac{P(\rho x + x_0)}{\rho x_n+t} v^p_n(x) + N_p(\rho x + x_0)=0~in~B_{3/4}(0).
\end{equation}
Note that
\begin{equation}
	\|U^{ij}\|_{L^{\infty}(B_{3\rho/4}(x_0))}+\|\rho \frac{P}{x_n}\|_{L^{\infty}(B_{3\rho/4}(x_0))}\le c_0,
\end{equation}
which, together with (3.8) and the local $C^{1,\alpha}$ estimates for uniformly elliptic equations, implies that for any given $\alpha\in(0,1)$, 
\begin{equation}\label{4.14}     
	\|v^p\|_{C^{1,\alpha}(B_{1/2}(0))}\le c_1 (\|v^p\|_{L^{\infty}(B_{3/4}(0))}+\|N_p\|_{L^{\infty}(B_{3\rho/4}(x_0))}).
\end{equation}
By the definition of $v^p$, scaling back from \eqref{4.14}, we have
\begin{equation}\label{D1}\begin{aligned}
	&\rho^{-1} \|DD_{x'}^p w\|_{L^{\infty}(B_{\rho/2}(x_0))} + \rho^{\alpha-1}
	[DD_{x'}^p w]_{C^{\alpha}(B_{\rho/2}(x_0))}\\
	&\le c_1(\rho^{- 2}\|D_{x'}^p w\|_{L^{\infty}(B_{3\rho/4}(x_0))}+\|N_p\|_{L^{\infty}(B_{3\rho/4}(x_0))}).
\end{aligned}\end{equation}
Combining the results from the   Part 1 (see \eqref{4.7} and \eqref{4.8}), we obtain
\begin{equation}\begin{aligned}
	|DD_{x'}^p w(x_0)|&\le c_1 \Big[\rho^{-1} e(3C_0+\frac{C_2}{n-1}) C_1^{p-2}(p-1)!
	(r-|x'_0|-3\rho/4)^{-(p-1)}(\rho+3\rho/4)^2 \\
	&+\rho C_2 C_1^{p-2}(p-1)!(r-|x'_0|-3\rho/4)^{-(p-2)}\Big].
\end{aligned}\end{equation}
Noting that $t=\rho < \frac{1}{p} (r-|x'_0|)$, we deduce that
\begin{equation}
	|DD_{x'}^p w(x_0)| \le C_0 C_3 C_1^{p-2}(p-1)!(r-|x'_0|)^{-(p-1)}t.
\end{equation}
Hence, taking $C_1\ge C_3$  we have proved that \eqref{SS2} also holds for all $x_0=(x'_0, t)\in G_r$ such that $t = \rho <\frac{1}{p} (r-|x'_0|)$.

{\bfseries Part 3.} In this part, we prove \eqref{SS3} for all $x=(x',x_n) \in G_r$ such that $x_n < \frac{1}{p} (r-|x'|)$. Fix any $\alpha\in(0,1)$ and $x_0=(x'_0,t)\in G_r$. We assume $\rho=t<\frac{1}{p} (r-|x'_0|)$. Note that
\begin{equation}\begin{aligned}
	&[U^{ij}(\rho x+x_0)]_{C^{\alpha}(B_{1/2}(0))}+\Big[\rho\frac{P(\rho x + x_0)}{\rho x_n+t}\Big]_{C^{\alpha}(B_{1/2}(0))}\\
	=&\rho^{\alpha}[U^{ij}]_{C^{\alpha}(B_{\rho/2}(x_0))}+\rho^{\alpha}\Big[\frac{P}{x_n/\rho}\Big]_{C^{\alpha}(B_{\rho/2}(x_0))}\le c_2.
\end{aligned}\end{equation}
Thus, applying the  local $C^{2,\alpha}$ estimates to    the  uniformly elliptic equations  \eqref{scalingEq}, we obtain
\begin{equation}\label{C2alpha}
	\|v^p\|_{C^{2,\alpha}(B_{1/4}(0))}\le c_3 (\|v^p\|_{L^{\infty}(B_{1/2}(0))}
	+\|N_p(\rho x+x_0)\|_{C^{\alpha}(B_{1/2}(0))}).
\end{equation}
By the definition of $v^p$, scaling back from \eqref{C2alpha}, we have
\begin{equation}\label{D2}\begin{aligned}
	&|D^2 D_{x'}^p w(x_0)| = |D^2 v^p(0)| \le \|v^p\|_{C^{2,\alpha}(B_{1/4}(0))} \\
	\le &c_3 (\rho^{-2}\|D_{x'}^p w\|_{L^{\infty}(B_{\rho/2}(x_0))}
	+\|N_p\|_{L^{\infty}(B_{\rho/2}(x_0))}+\rho^\alpha [N_p]_{C^{\alpha}(B_{\rho/2}(x_0))}).
\end{aligned}\end{equation}
Sine we have (4.7) and Part 1,  this leads us  to estimate the term $\rho^\alpha [N_p]_{C^{\alpha}(B_{\rho/2}(x_0))}$. For general $l\le p$, by the definition of \eqref{Np}, we have
\begin{equation}\label{Nl}
	N_l=D_{x'}^l N + \sum_{m=0}^{l-1} \binom{l}{m}\Big[\sum_{1\le i,j\le n}(D_{x'}^{l-m} U^{ij}) (D_{x'}^m w)_{ij} + D_{x'}^{l-m} P \frac{(D_{x'}^m w)_n}{x_n}\Big],
\end{equation}
which shows that $N_l$ contains such terms as $\frac{D D_{x'}^{m} w}{x_n}, D^2 D^{m-1}_{x'} w$ for $m\le l$. Noting that $x_n > \frac{\rho}{2}$ for $x=(x',x_n)\in B_{\rho/2(x_0)}$, we can estimate $\frac{D D_{x'}^{m} w}{x_n}$ by
\begin{equation}
	\rho^{\alpha}[\frac{D D_{x'}^{m}w}{x_n}]_{C^{\alpha}(B_{\rho/2}(x_0))}
	\le 2 (\|D D_{x'}^{m} w\|_{L^{\infty}(B_{\rho/2}(x_0))}
	+ \rho^{\alpha-1}[D D_{x'}^{m} w]_{C^{\alpha}(B_{\rho/2}(x_0))}).
\end{equation}
Moreover, $D^2 D^{m-1}_{x'} w$ can be regarded as $D D^{m}_{x'} w$ or $D_n^2 D^{m-1}_{x'} w$. Repeating the proof in Part 2, we can obtain the estimates $D D^{m}_{x'} w$ for $m\le l$. As for $D_n^2 D^{m-1}_{x'} w$, we claim that for any $m \le l$,
\begin{equation}\label{4.23}
	\rho^\alpha [D_n^2 D_{x'}^{m} u]_{C^{\alpha}(B_{\rho/2}(x_0))} \le c_4 C_0 C_1^{(m-2)^+} (m-2)^+! (r-|x'_0|)^{-(m-2)^+}.
\end{equation}
We will prove the claim by induction. By the smoothness of $u$ and $\varphi$, we can assume \eqref{4.23} holds for $m = 0,1,2$. Then, for any $2\le l \le p-1$, we assume \eqref{4.23} holds for all $0 \le m\le l$. Similarly as Lemma \ref{lemma3.3}, it follows from \eqref{Nl} that
\begin{equation}
	\|N_l(\rho x+x_0)\|_{C^{\alpha}(B_{1/2}(0))} \le c_4 C_0 C_1^{l-2} (l-1)! (r-|x'_0|)^{-(l-1)}.
\end{equation}
Denote $v^l=\frac{1}{\rho^2}(D_{x'}^l w)(\rho x + x_0)$.  It follows from the equation in (3.9) that $v^l$ satisfies
\begin{equation}
	U^{ij}(\rho x + x_0) v^l_{ij}(x) + \rho\frac{P(\rho x + x_0)}{\rho x_n+t} v^l_n(x) + N_l(\rho x + x_0)=0~in~B_{3/4}(0),
\end{equation}
which yields 
\begin{equation}\label{Dnn}
	v^l_{nn}(x)=-\frac{1}{U^{nn}(\rho x+x_0)}\Big(\sum_{i+j<2n} U^{ij}(\rho x+x_0)v^l_{ij}(x)+\rho\frac{P(\rho x + x_0)}{\rho x_n+t} v^l_n(x) + N_l(\rho x + x_0)\Big).
\end{equation}
By \eqref{Dnn} and the induction hypothesis for (4.23), we can prove
\begin{equation}
	\|v^l_{nn}\|_{C^{\alpha}(B_{1/2}(0))} \le c_4 C_0 C_1^{l-2} (l-1)! (r-|x'_0|)^{-(l-1)}.
\end{equation}
By the definition of $v^l$, we obtain \eqref{4.23} for $m=l+1$. By induction, we finish the proof of our claim. As a result, we can use the same method as in
the proof of  Lemma \ref{lemma3.3} to prove
\begin{equation}
	\|N_p(\rho x+x_0)\|_{C^{\alpha}(B_{1/2}(0))} \le \tilde{c_4} C_0 C_1^{p-2} (p-1)! (r-|x'_0|)^{-(p-1)}.
\end{equation}
Therefore, from (4.20) and the  discussion below \eqref{D2}, we obtain
\begin{equation}
	|D^2 D_{x'}^p w(x_0)| \le c_5 C_0 C_1^{p-2} (p-1)! (r-|x'_0|)^{-(p-1)}\le C_0 C_1^{p-1} (p-1)! (r-|x'_0|)^{-(p-1)}
\end{equation}
if we take $C_1 \ge c_5$.

{\bfseries Part 4.} \; In this part, we prove \eqref{SS3} for all $x=(x',x_n) \in G_r$ such that $x_n \geq  \frac{1}{p} (r-|x'|)$.   For this purpose, we fix an arbitrary $\alpha \in (0,1)$ and $x_0=(x'_0, t)\in G_r$ such that $t\ge \frac{1}{p}(r-|x'_0|)$. We may assume $t \le \frac{r}{2}$. Let $\rho = \frac{1}{2p}(r-|x'_0|)$. Then we have $B_\rho(x_0) \subset B_r(0)$. Denote $v^{p+1}=\frac{1}{\rho^2}(D^{p+1}_{x'} w)(\rho x + x_0)$. As the proof of \eqref{4.14}, we obtain
\begin{equation}
	\|v^{p+1}\|_{C^{1,\alpha}(B_{1/2}(0))}\le c_6 (\|v^{p+1}\|_{L^{\infty}(B_1(0))}+\|N_{p+1}\|_{L^{\infty}(B_{\rho}(x_0))}),
\end{equation}
which yields
\begin{equation}\label{4.31}
	|DD_{x'}^{p+1}w(x_0)|\le c_6 (\rho^{-1}\|D_{x'}^{p+1}w\|_{L^{\infty}(B_{\rho}(x_0))}+\rho\|N_{p+1}\|_{L^{\infty}(B_{\rho}(x_0))}).
\end{equation}
By the induction hypothesis  for (3.3),   we have
\begin{equation}\label{4.32}\begin{aligned}
	&\|D_{x'}^{p+1}w\|_{L^{\infty}(B_{\rho}(x_0))}\le\|D^2D_{x'}^{p-1}w\|_{L^{\infty}(B_{\rho}(x_0))}\\
	\le &C_0 C_1^{p-2} (p-2)! (r-|x'_0|-\rho)^{-(p-2)}\le eC_0 C_1^{p-2} (p-2)! (r-|x'_0|)^{-(p-2)}.
\end{aligned}\end{equation}
Now it remains to estimate $N_{p+1}$. It follows from the proof of Lemma \ref{lemma3.2} that in $B_\rho(x_0)$,
\begin{equation}\begin{aligned}
	&|D_{x'}^{p+1}U^{ij}|\le \sum |D_{x'}^{p+1} (u_{i_1 j_1}u_{i_2 j_2}\cdots u_{i_{n-1} j_{n-1}})|
	\le (n-1)! \Bigg(\sum_{\substack{\alpha_1+\cdots+\alpha_{n-1}=p+1,\\ \alpha_1,\cdots, \alpha_{n-1}< p+1}}  \\
	&\frac{(p+1)!}{(\alpha_1!)\cdots (\alpha_{n-1}!)} |D^2 D_{x'}^{\alpha_1}u|\cdots |D^2 D_{x'}^{\alpha_{n-1}}u|
	+(n-1)C_0^{n-2}|D^2D_{x'}^{p+1}u|\Bigg)\\
	&\le \tilde{C_0} \Big(C_1^{p-1}(p-1)!(r-|x'|)^{-(p-1)}+\|D^2D_{x'}^{p}w\|_{L^{\infty}(B_{\rho}(x_0))}\Big)
\end{aligned}\end{equation}
and
\begin{equation}\begin{aligned}
	|(n+2)^{-1} &D_{x'}^{p+1}P| \le \sum_{\alpha_1+\cdots+\alpha_{n+1}=p+1} \frac{(p+1)!}{(\alpha_1!)\cdots (\alpha_{n+1}!)}
	\frac{|D_{x'}^{\alpha_1}u_n|}{x_n}\cdots \frac{|D_{x'}^{\alpha_{n+1}}u_n|}{x_n} \\
	&\le \tilde{C_0} C_1^{p-1} (p-1)! (r-|x'|)^{-(p-1)}.
\end{aligned}\end{equation}
Hence, similarly as Lemma \ref{lemma3.3}, we can prove
\begin{equation}\label{4.35}
	\|N_{p+1}\|_{L^{\infty}(B_{\rho}(x_0))} \le C_2 C_1^{p-2} (p-1)! (r-|x'_0|)^{p-2}
	+ C_2 (p+1) \|D^2D_{x'}^{p}w\|_{L^{\infty}(B_{\rho}(x_0))}.
\end{equation}
Therefore, it follows from \eqref{4.31}, \eqref{4.32} and \eqref{4.35} that
\begin{equation}
	|DD^{p+1}_{x'}w(x_0)|\le c_7 C_0 C_1^{p-2}(p-1)!(r-|x'_0|)^{-(p-1)} + c_7(p+1)\rho\|D^2D_{x'}^{p}w\|_{L^{\infty}(B_{\rho}(x_0))},
\end{equation}
Which, together with  (4.27)  for $l=p$,  yields
\begin{equation}\label{iteration}
	|D^2D^p_{x'}w(x_0)|\le c_7 C_0 C_1^{p-2}(p-1)!(r-|x'_0|)^{-(p-1)} + c_7(p+1)\rho\|D^2D_{x'}^{p}w\|_{L^{\infty}(B_{\rho}(x_0))}.
\end{equation}
	
To complete  Part 4,   we need an iteration lemma:    Lemma 2 in \cite{MR0118970} or Lemma 2.2 in \cite{2018The}.
\begin{Lemma}\label{lemma4.1}
	Given an integer $p\ge 1$ and constants $0\le \varepsilon \le 1, M > 0$. Assume $h(t)$ is a positive monotone increasing function on $[0,r]$ and satisfies
	\begin{equation}
		h(t) \le \varepsilon h(t + p^{-1}(r-t)) + M(r-t)^{-p},~\forall~t\in [0,r).
	\end{equation}
	Then there exists a positive constant $C$ depending only on $\varepsilon$ such that
	\begin{equation}
		h(t)\le CM(r-t)^{-p},~\forall~t\in [0,r).
	\end{equation}
\end{Lemma}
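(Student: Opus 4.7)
The plan is to iterate the given inequality along the orbit of the map $t\mapsto t+p^{-1}(r-t)$. Concretely, I would set $t_0:=t$ and $t_{k+1}:=t_k+p^{-1}(r-t_k)$. A direct computation yields $r-t_k=(1-1/p)^k(r-t)$, so $t_k\uparrow r$ monotonically. Applying the hypothesis at each $t_j$ and telescoping gives
\[
h(t) \;\le\; \varepsilon^k h(t_k) + M\sum_{j=0}^{k-1} \varepsilon^j (r-t_j)^{-p} \;=\; \varepsilon^k h(t_k) + M(r-t)^{-p}\sum_{j=0}^{k-1}\gamma^j,
\]
where I abbreviate $\gamma:=\varepsilon/(1-1/p)^p$.

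Next I would pass to the limit $k\to\infty$. Because $h$ is monotone increasing on the compact interval $[0,r]$, it is bounded there by $h(r)<\infty$, and hence $\varepsilon^k h(t_k)\le \varepsilon^k h(r)\to 0$ as soon as $\varepsilon<1$. Provided that $\gamma<1$ as well---a condition that, since $(1-1/p)^p\ge 1/4$ for $p\ge 2$, reduces to a smallness condition on $\varepsilon$ alone---the geometric series converges and I obtain $h(t)\le M(1-\gamma)^{-1}(r-t)^{-p}$, which is the claim with $C=(1-\gamma)^{-1}$.

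A slightly cleaner repackaging, which I would probably present in the actual write-up, is to introduce the renormalized quantity $\phi(t):=h(t)(r-t)^p$. The hypothesis then reads $\phi(t)\le \gamma\,\phi(t')+M$ with $t':=t+p^{-1}(r-t)$, and because $\phi(t_k)\le h(r)(r-t_k)^p\to 0$, a single telescope yields $\phi(t)\le M/(1-\gamma)$ directly, sidestepping any separate estimate of the tail $\varepsilon^k h(t_k)$.

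The only genuine obstacle is the convergence of the geometric series, i.e., ensuring $\gamma<1$; this is precisely where the constraint that $\varepsilon$ be sufficiently small enters. Fortunately, in the application in Part 4 the prefactor playing the role of $\varepsilon$ is $c_7(p+1)\rho$ with $\rho=(r-|x_0'|)/(2p)$, which is automatically $O(1)$ in $p$ and can be made as small as desired by shrinking $r$ at the outset---a shrinking already permitted by \eqref{rassume}. Modulo this smallness, the lemma closes the iteration on $\|D^2D_{x'}^p w\|$ cleanly, and no further ingredient beyond the telescoping iteration and monotonicity of $h$ is required.
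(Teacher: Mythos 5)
The paper offers no proof of Lemma \ref{lemma4.1}, deferring to Friedman's Lemma 2 and Han--Jiang's Lemma 2.2; your telescoping iteration along $t_{k+1}=t_k+p^{-1}(r-t_k)$ is precisely the standard argument behind those references, so the approach matches. You have also correctly isolated the genuine hypothesis hidden behind the cosmetic range $0\le\varepsilon\le 1$: one needs $\gamma=\varepsilon(1-1/p)^{-p}<1$, and the lemma as printed is in fact false without it --- for instance with $p=2$, $\varepsilon=1/2$, $h(t)=\min\{(r-t)^{-3/2},K\}$ and $M$ of order $K^{-1/3}$ the hypothesis holds yet any admissible $C$ must grow like $K^{1/3}$. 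Two small refinements would make your write-up airtight: since $(1-1/p)^p$ increases from $1/4$ at $p=2$ toward $1/e$, one has $\gamma\le 4\varepsilon$ for every $p\ge 2$, so taking $C=(1-4\varepsilon)^{-1}$ gives a constant depending only on $\varepsilon$, as the statement promises, once $\varepsilon<1/4$; and the degenerate case $p=1$, where the step reaches $r$ in one jump and $(1-1/p)^p=0$, should be excluded or treated separately. Your final remark, that the application in Part 4 secures the requisite smallness by taking $\varepsilon=c_7 r$ and shrinking $r$, is exactly right.
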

Back to our previous proof, we note that $c_7(p+1)\rho \le c_7 r$ and denote $\varepsilon = c_7 r$. Moreover, we define the function 
\begin{equation}
	h(t) = \sup\{|D^2D_{x'}^p u|: |x'|\le t, x_n \ge \frac{1}{p} (r-|x'|)\}, \ \  \forall t\in [0,r].
\end{equation}
Combining the results in Part 3 with \eqref{iteration}, we obtain
\begin{equation}\begin{aligned}
	h(t) &\le \varepsilon h(t+\rho) + c_7 C_0 C_1^{p-2}(p-1)!(r-t)^{-(p-1)}\\
	&\le \varepsilon h(t+(p-1)^{-1}(r-t)) + c_7 C_0 C_1^{p-2}(p-1)!(r-t)^{-(p-1)}.
\end{aligned}\end{equation}
Then it follows from Lemma \ref{lemma4.1} that
\begin{equation}
	h(t) \le C(\varepsilon) c_7 C_0 C_1^{p-2}(p-1)!(r-t)^{-(p-1)}.
\end{equation}
By definition of $h(t)$, we obtain that for any $x= (x',x_n)\in G_r$ and $x_n\ge \frac{1}{p}(r-|x'|)$,
\begin{equation}
	|D^2D^p_{x'}w(x)|\le C(\epsilon) c_7 C_0 C_1^{p-2}(p-1)!(r-|x'|)^{-(p-1)}
	\le C_0 C_1^{p-1}(p-1)!(r-|x'|)^{-(p-1)}
\end{equation}
if we take $C_1 \ge C(\epsilon) c_7$. This shows Part 4 and finishes the proof of  Theorem \ref{TAnalytic}.

	%
	%
	%
	%
	%
	%
	%
	%
	%
	%
	%
	%
\section {iteration of ordinary differential equations along the normal direction}\label{sec5}
In previous sections, we obtain the estimates of tangential derivatives up to arbitrary order and normal derivatives only up to order 2.  In this section, we introduce an iteration about a series of ordinary differential equations along the normal direction, which originates  from \cite{MR3024302,MR3705680}. The iteration will be used to estimate the  higher order normal derivatives.

First, we differentiate the first equation in (1.3) with respect to $x_n$ and obtain
\begin{equation}
	U^{ij}u_{nij} = (n+2) \Big( \frac{D_n u}{x_n} \Big)^{n+1} \frac{x_n u_{nn}-u_n}{x_n^2}.
\end{equation}
We rewrite it as
\begin{equation}\label{5.2}
	u_{nnn} = (n+2) (U^{nn})^{-1}\Big( \frac{D_n u}{x_n} \Big)^{n+1} \frac{x_n u_{nn}-u_n}{x_n^2}
	-(U^{nn})^{-1}\sum_{i+j<2n} U^{ij}u_{nij}.
\end{equation}
Define the functions $f_i, g_i $ and $  h_i$  $(i=0, 1)$ in $\mathbb{R}^n_+$,
\begin{equation}\label{5.3}\begin{aligned}
	&f_0 = u_n, \; f_1=D_n f_0 - \frac{f_0}{x_n},\\
	& g_0 = D_n f_0 - h_0 \frac{f_0}{x_n}, \; g_1=-(U^{nn})^{-1}\sum_{i+j<2n} U^{ij}u_{nij},\\
	&h_0 = (n+2) (U^{nn})^{-1}\Big( \frac{D_n u}{x_n} \Big)^{n+1},\; h_1 = h_0 - 1.
\end{aligned}\end{equation}
Note that
\begin{equation}\label{5.4}
	\frac{x_n u_{nn}-u_n}{x_n^2}=D_n\Big(\frac{u_n}{x_n}\Big)=D_n\Big(\frac{f_0}{x_n}\Big)=\frac{D_n f_0}{x_n}-\frac{f_0}{x_n^2}=\frac{f_1}{x_n}
\end{equation}
and
\begin{equation}\label{5.5}
	u_{nnn}=D_{n} (D_n f_0) = D_n\Big(f_1+\frac{f_0}{x_n}\Big)=D_n f_1 + \frac{f_1}{x_n}.
\end{equation}
It follows from \eqref{5.2} that
\begin{equation*}
	D_n f_1 + \frac{f_1}{x_n} = h_0  \frac{f_1}{x_n} +g_1,
\end{equation*}
which yields
\begin{equation}\label{5.6}
	D_n f_1 =  h_1 \frac{f_1}{x_n} +g_1.
\end{equation}
Similarly for $2\le k\le n+2$, we define
\begin{equation}\begin{aligned}
	&f_k=D_n f_{k-1} - \frac{f_{k-1}}{x_n},\\
	&g_k=D_n g_{k-1}+D_n h_{k-1} \frac{f_{k-1}}{x_n},\\
	&h_k = h_{k-1}-1=h_0-k.
\end{aligned}\end{equation}
As a generalization of \eqref{5.4}-\eqref{5.6}, we claim the following property.
\begin{Lemma}\label{lemma5.1}
	 For any integer $0\le k\le n+2$ and $x=(x',x_n)\in \mathbb{R}^n_+$, we have 
	\begin{align}
		\label{5.8}&\frac{f_k}{x_n}=D_n \Big(\frac{f_{k-1}}{x_n}\Big)=D_n^k \Big(\frac{f_0}{x_n}\Big)=D_n^k \Big(\frac{u_n}{x_n}\Big);\\
		\label{5.9}&D_n^{k+2} u = D_n f_k + k \frac{f_k}{x_n};\\
		\label{5.10}&D_n f_k =h_k \frac{f_k}{x_n} +g_k.
	\end{align}
\end{Lemma}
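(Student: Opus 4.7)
The plan is to prove all three identities simultaneously by induction on $k$, taking the base cases from the computations already displayed in the paper: equation (5.4) gives (5.8) for $k=1$, equations (5.5) and the relation $D_n^2 u = D_n f_0$ give (5.9) for $k=0,1$, and equation (5.6) gives (5.10) for $k=1$.

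For (5.8), the key algebraic observation is that the defining relation $f_k = D_n f_{k-1} - f_{k-1}/x_n$, divided by $x_n$, is exactly the product rule applied to $f_{k-1}/x_n$:
\[
\frac{f_k}{x_n} \;=\; \frac{D_n f_{k-1}}{x_n} - \frac{f_{k-1}}{x_n^2} \;=\; D_n\!\left(\frac{f_{k-1}}{x_n}\right).
\]
Iterating this identity yields $f_k/x_n = D_n^k(f_0/x_n) = D_n^k(u_n/x_n)$, which is (5.8).

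For (5.9), I would induct, assuming $D_n^{k+1} u = D_n f_{k-1} + (k-1) f_{k-1}/x_n$. Differentiating once more in $x_n$ and using $D_n f_{k-1} = f_k + f_{k-1}/x_n$ (the definition of $f_k$) together with $D_n(f_{k-1}/x_n) = f_k/x_n$ (from (5.8)), the right hand side becomes
\[
D_n^{k+2} u \;=\; D_n f_k + \frac{f_k}{x_n} + (k-1)\frac{f_k}{x_n} \;=\; D_n f_k + k\,\frac{f_k}{x_n},
\]
which is (5.9).

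For (5.10), I would again induct, assuming $D_n f_{k-1} = h_{k-1}\, f_{k-1}/x_n + g_{k-1}$. Differentiating this in $x_n$, applying the product rule to $h_{k-1} f_{k-1}/x_n$, substituting $D_n(f_{k-1}/x_n) = f_k/x_n$ from (5.8), and then using $D_n f_k = D_n^2 f_{k-1} - D_n(f_{k-1}/x_n)$ to pass from $f_{k-1}$ to $f_k$, the terms regroup as
\[
D_n f_k \;=\; (h_{k-1}-1)\frac{f_k}{x_n} + D_n g_{k-1} + D_n h_{k-1}\cdot \frac{f_{k-1}}{x_n} \;=\; h_k\frac{f_k}{x_n} + g_k,
\]
by the definitions $h_k = h_{k-1} - 1$ and $g_k = D_n g_{k-1} + D_n h_{k-1}\cdot f_{k-1}/x_n$. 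The main (and only) subtle point is that the recursive definitions of $h_k$ and $g_k$ are engineered precisely so that the spurious $-f_k/x_n$ produced by passing from $D_n^2 f_{k-1}$ to $D_n f_k$ is absorbed into the shift $h_{k-1} \to h_{k-1}-1$; once this bookkeeping is recognized, no genuine obstacle remains, and the induction closes cleanly.
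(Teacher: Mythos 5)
Your argument is correct and follows essentially the same route as the paper: establish $(5.8)$ directly from the defining recursion $f_k = D_n f_{k-1} - f_{k-1}/x_n$ (the product rule in disguise), then prove $(5.9)$ and $(5.10)$ by induction on $k$, using $(5.8)$ and the recursive definitions of $f_k$, $g_k$, $h_k$ to close the step. The only cosmetic difference is bookkeeping: the paper records the induction step by showing that the residual $(\text{LHS} - \text{RHS})$ at level $k+1$ equals $D_n$ applied to the residual at level $k$, whereas you compute forward from the level-$k$ identity; both are the same calculation.
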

\begin{proof}
	It is easy to see that \eqref{5.8}-\eqref{5.10} hold for $k=0$. Moreover, we obtain by computation that for any integer $0\le k\le n+1$ and $x=(x',x_n)\in \mathbb{R}^n_+$,
	\begin{align*}
		\frac{f_{k+1}}{x_n}&= \frac{D_n f_k}{x_n}-\frac{f_k}{x_n^2}=D_n \Big(\frac{f_k}{x_n}\Big),\\
		D_n f_{k+1} + (k+1) \frac{f_{k+1}}{x_n}&=D_n \Big(f_{k+1} + (k+1) \frac{f_k}{x_n}\Big) = D_n \Big(D_n f_k + k \frac{f_k}{x_n}\Big),\\
		D_n f_{k+1}-h_{k+1} \frac{f_{k+1}}{x_n} -g_{k+1}&= D_n \Big(D_n f_k - \frac{f_k}{x_n}\Big) - (h_k-1)D_n \Big(\frac{f_k}{x_n}\Big)- D_n g_k - D_n h_k \frac{f_k}{x_n} \\
		&=D_n\Big(D_n f_k -h_k \frac{f_k}{x_n} -g_k\Big).
	\end{align*}
	Then, (5.8) follows directly from the first equality, while (5.9) and (5.10)  can be proved by last two ones and  simple induction method.
\end{proof}

Recalling that $n$ is even and $u\in C^{\infty}(\overline{\mathbb{R}^n_+})$, we obtain by \eqref{5.8} that $\lim\limits_{x_n\to 0+} f_k(x,x_n)$ exists for any $x'\in \mathbb{R}^{n-1}$ and $0\le k\le n+2$. Then we define $f_k(x',0)=\lim\limits_{x_n\to 0+} f_k(x,x_n)$ and so as $g_k, h_k$. Note that  $f_0(x',0)=u_n(x',0)=0$ by Lemma \ref{lemma2.1}. It follows from the definitions of $f_k, g_k, h_k$ that all of them are global smooth in $\overline{\mathbb{R}^n_+}$ (see \cite{MR3705680}).

By \eqref{5.3} and Lemma \eqref{2.2}, we have $h_0(x',0)=n+2$, which yields $h_k(x',0)=n+2-k$. Thus, for $1\le k\le n+1$, $\frac{h_k}{x_n}$ is singular near $\{x_n=0\}$ and then \eqref{5.10} is a singular ordinary differential equation about $x_n$. Instead for $k = n+2$, the singularity disappears since $h_{n+2}(x',0)=0$. It also shows one reason why we introduce the iteration of $f_k,g_k,h_k$.

In order to estimate $D^{k+2} u$, it is sufficient to consider the estimates of derivatives for $f_k$ due to \eqref{5.9} in Lemma \ref{lemma5.1}. For this purpose, we need a different form of \eqref{5.10}, which is given in the following lemma.
\begin{Lemma}\label{lemma5.2}
	For any integer $1\le k\le n+1$, the equation \eqref{5.10} is equivalent to
	\begin{equation}\label{ode2}
		D_n f_k = (n+2-k)\frac{f_k}{x_n} + D_n^{k-1} F,
	\end{equation}
	where $F=g_1+(h_0-n-2)f_1/x_n$. In particular, we have
	\begin{equation}
		g_k = D_n^{k-1}g_1 +\sum_{m=1}^{k-1} \binom{k-1}{m-1} D_n^{k-m} h_0\cdot \frac{f_m}{x_n}.
	\end{equation}
\end{Lemma}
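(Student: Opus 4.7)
The plan is to exploit the decomposition
$$h_k = h_0 - k = (h_0 - n - 2) + (n + 2 - k),$$
which rewrites \eqref{5.10} as
$$D_n f_k = (n+2-k)\frac{f_k}{x_n} + (h_0 - n - 2)\frac{f_k}{x_n} + g_k.$$
Comparison with \eqref{ode2} shows that the equivalence of the two forms is exactly the identity
$$D_n^{k-1} F = (h_0 - n - 2)\frac{f_k}{x_n} + g_k, \qquad F = g_1 + (h_0 - n - 2)\frac{f_1}{x_n}.$$
Once this identity is established, both conclusions of the lemma follow simultaneously.

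First I would prove the explicit formula for $g_k$ stated at the end of the lemma by induction on $k$. The base case $k=1$ is trivial since the sum is empty. For the inductive step, I would apply $D_n$ to the assumed expression for $g_k$ using Leibniz's rule. Two algebraic inputs are crucial: the identity $D_n(f_m/x_n) = f_{m+1}/x_n$ coming from \eqref{5.8}, and the fact that $D_n h_k = D_n h_0$ since $h_k - h_0$ is a constant. Substituting into the recursive definition $g_{k+1} = D_n g_k + D_n h_k \cdot f_k/x_n$ produces two sums; reindexing one via $m \mapsto m-1$ allows them to be merged through Pascal's identity $\binom{k-1}{m-1} + \binom{k-1}{m-2} = \binom{k}{m-1}$, yielding precisely the claimed formula with $k+1$ in place of $k$.

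Next, to close the loop and obtain \eqref{ode2}, I would compute $D_n^{k-1} F$ directly. The term $g_1$ contributes $D_n^{k-1} g_1$. For the term $(h_0 - n - 2) f_1/x_n$, iterating \eqref{5.8} gives $D_n^{j}(f_1/x_n) = f_{j+1}/x_n$, so Leibniz's rule yields
$$D_n^{k-1}\!\left[(h_0 - n - 2)\frac{f_1}{x_n}\right] = \sum_{j=0}^{k-1} \binom{k-1}{j} D_n^{k-1-j}(h_0 - n - 2)\cdot \frac{f_{j+1}}{x_n}.$$
Isolating the $j=k-1$ term (which contributes exactly $(h_0 - n - 2) f_k/x_n$) and setting $m = j+1$ in the remaining terms, while noting $D_n^{k-1-j}(h_0 - n - 2) = D_n^{k-m} h_0$ for $m < k$, I recover the right-hand side $(h_0 - n - 2)f_k/x_n + g_k$ via the formula for $g_k$ already established.

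The step requiring the most care is the bookkeeping: the induction for $g_k$ must correctly match indices and binomial coefficients after reindexing one sum, and the derivation of \eqref{ode2} depends on cleanly separating the boundary term $j=k-1$ from the interior Leibniz sum. Beyond this combinatorial bookkeeping, the proof requires nothing more than the algebraic identities already supplied by Lemma \ref{lemma5.1} and the definitions in \eqref{5.3}.
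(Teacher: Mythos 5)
Your proof is correct, but it runs the lemma in the opposite direction from the paper. The paper proves \eqref{ode2} first, by a very short induction on $k$: once \eqref{ode2} is known for $k-1$, it just differentiates that identity once in $x_n$ and uses $f_k = D_n f_{k-1} - f_{k-1}/x_n$ together with \eqref{5.8} to land on \eqref{ode2} for $k$; only afterwards does it extract the formula for $g_k$ by subtracting \eqref{5.10} from \eqref{ode2} and expanding $D_n^{k-1}F$ via Leibniz (the coefficient $n+2-k-h_k = -(h_0-n-2)$ produces a cancellation against the $j=k-1$ Leibniz term). You instead prove the closed form for $g_k$ first, by induction on the recursion $g_{k+1} = D_n g_k + D_n h_k\cdot f_k/x_n$ — a step that needs Leibniz, reindexing, and Pascal's rule — and then compute $D_n^{k-1}F$ directly, separating the boundary term $j=k-1$ and matching the remainder with your $g_k$ formula to recover \eqref{ode2} via the split $h_k = (h_0-n-2) + (n+2-k)$. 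Both routes are sound and rely on the same two algebraic inputs ($D_n(f_m/x_n) = f_{m+1}/x_n$ and $D_n h_k = D_n h_0$); the paper's ordering has the virtue that the induction for \eqref{ode2} is a one-line differentiation, whereas your ordering puts the heavier combinatorial bookkeeping (Pascal's identity) into the induction and makes the passage to \eqref{ode2} a clean identity check rather than a cancellation. Neither order is intrinsically superior; yours makes the equivalence $D_n^{k-1}F = g_k + (h_0-n-2)f_k/x_n$ explicit, which is arguably more transparent about why \eqref{5.10} and \eqref{ode2} say the same thing.
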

\begin{proof}
	We will prove Lemma  5.2  by induction on $k$. First, by \eqref{5.10}, we have
	\begin{equation}
		D_n f_1 = (h_0-1) \frac{f_1}{x_n} + g_1 = (n+1)\frac{f_1}{x_n} + F.
	\end{equation}
	So \eqref{ode2} holds for $k=1$. Assume \eqref{ode2} holds for $1\le k-1\le n+1$. Differentiating this on $x_n$, we obtain
	\begin{equation}
		D_n(D_n f_{k-1}) = (n+3-k) D_n\Big(\frac{f_{k-1}}{x_n}\Big) + D_n^{k-1}F.
	\end{equation}
	It follows from the definition of $f_k$ and \eqref{5.8} that
	\begin{equation}
		D_n f_k = D_n\Big(D_n f_{k-1}-\frac{f_{k-1}}{x_n}\Big) = (n+3-k-1) D_n\Big(\frac{f_{k-1}}{x_n}\Big) + D_n^{k-1}F=(n+2-k)\frac{f_k}{x_n} + D_n^{k-1} F.
	\end{equation}
	Thus, \eqref{ode2} is proved for any $1\le k\le n+2$. by induction. Moreover, by comparing \eqref{5.10} and \eqref{ode2}, we obtain
	\begin{equation}\begin{aligned}
		g_k &= D_n^{k-1} F + (n+2 -k - h_k)\frac{f_k}{x_n}\\
		&=D_n^{k-1}g_1 +(h_0-n-2)D_n^{k-1}\Big(\frac{f_1}{x_n}\Big) + \sum_{m=1}^{k-1} \binom{k-1}{m-1} (D_n^{k-m} h_0) D_n^{m-1} \Big(\frac{f_1}{x_n}\Big) \\
		&+ (n+2 -k - h_0+k)\frac{f_k}{x_n}\\
		&=D_n^{k-1}g_1 +\sum_{m=1}^{k-1} \binom{k-1}{m-1} (D_n^{k-m} h_0) \frac{f_m}{x_n},
	\end{aligned}\end{equation}
	where we have used $F=g_1+(h_0-n-2)f_1/x_n, h_k = h_0 - k$ and \eqref{5.8}.
\end{proof}

Now, we can use Lemma \ref{lemma5.2} to obtain a formal solution to equation (5.11) on$f_k$.
\begin{Lemma}\label{lemma5.3}
	For any integer $1\le k\le n+1$ and $x\in G_r$, we have
	\begin{equation}\label{solve}
		f_k(x',x_n) = \frac{f_k(x',r)}{r^{n+2-k}}x_n^{n+2-k}-x_n^{n+2-k} \int_{x_n}^r \frac{D_n^{k-1}F(x',s)}{s^{n+2-k}}ds.
	\end{equation}
\end{Lemma}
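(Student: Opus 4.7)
The plan is to treat the identity \eqref{ode2} of Lemma \ref{lemma5.2} as a first-order linear ODE in $x_n$ (with $x'$ frozen as a parameter) and solve it explicitly by the method of integrating factors. Writing the ODE in the form
\[
D_n f_k - (n+2-k)\,\frac{f_k}{x_n} \;=\; D_n^{k-1} F,
\]
I would multiply through by the integrating factor $x_n^{-(n+2-k)}$. The key observation is that, by the product rule,
\[
D_n\!\left(\frac{f_k}{x_n^{n+2-k}}\right)
\;=\; \frac{D_n f_k}{x_n^{n+2-k}} \;-\; (n+2-k)\,\frac{f_k}{x_n^{n+3-k}},
\]
so the multiplied equation becomes the exact derivative identity
\[
D_n\!\left(\frac{f_k(x',x_n)}{x_n^{n+2-k}}\right) \;=\; \frac{D_n^{k-1}F(x',x_n)}{x_n^{n+2-k}}.
\]

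Next I would integrate this identity in $x_n$ on the interval $[x_n, r]$, treating the endpoint $x_n=r$ as providing the boundary datum $f_k(x',r)/r^{n+2-k}$. This yields
\[
\frac{f_k(x',r)}{r^{n+2-k}} \;-\; \frac{f_k(x',x_n)}{x_n^{n+2-k}}
\;=\; \int_{x_n}^r \frac{D_n^{k-1}F(x',s)}{s^{n+2-k}}\,ds,
\]
and a straightforward rearrangement followed by multiplying through by $x_n^{n+2-k}$ gives exactly \eqref{solve}. Since $1 \le k \le n+1$ implies $n+2-k \ge 1$, the integrating factor vanishes at $s=0$, but we are integrating on $[x_n, r]$ with $x_n>0$, so no singular behaviour enters and the integral is well-defined; smoothness of $F$ on $\overline{\mathbb{R}^n_+}$ (recorded in the paragraph after Lemma \ref{lemma5.1}) ensures integrability of the integrand on $[x_n,r]$.

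Overall, this is essentially a bookkeeping exercise, and I do not anticipate any real obstacle. The only mild subtlety worth checking is that one uses \eqref{ode2} — the clean form derived in Lemma \ref{lemma5.2} — rather than \eqref{5.10}, because the coefficient $n+2-k$ in front of $f_k/x_n$ in \eqref{ode2} is what makes $x_n^{-(n+2-k)}$ an integrating factor; the alternative form with $h_k$ would instead require exponentiating an antiderivative of $h_k/x_n$, which is unnecessarily complicated.
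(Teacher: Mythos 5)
Your proof is correct and uses exactly the same method as the paper: recognize $x_n^{-(n+2-k)}$ as an integrating factor for \eqref{ode2}, rewrite the equation as $D_n\bigl(f_k/x_n^{n+2-k}\bigr) = D_n^{k-1}F/x_n^{n+2-k}$, integrate over $[x_n,r]$, and rearrange. The remarks about avoiding singularity at $s=0$ and preferring \eqref{ode2} over \eqref{5.10} are sound side observations but not needed beyond what the paper already records.
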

\begin{proof}
It follows from \eqref{ode2} that
\begin{equation}
	D_n\Big(\frac{f_k}{x_n^{n+2-k}}\Big) = \frac{1}{x_n^{n+2-k}}\Big[D_n f_k - (n+2-k)\frac{f_k}{x_n}\Big]=\frac{D_n^{k-1}F}{x_n^{n+2-k}}.
\end{equation}
Integrating it on $[x_n, r]$ yields
\begin{equation}
	\frac{f_k(x', x_n)}{x_n^{n+2-k}} = \frac{f_k(x',r)}{r^{n+2-k}}- \int_{x_n}^r \frac{D_n^{k-1}F(x',s)}{s^{n+2-k}}ds.
\end{equation}
So \eqref{solve} is proved.
\end{proof}

We will see that Lemma \ref{lemma5.3} is important when we estimate the tangential derivatives of $f_k$ and $D^{k+2}u$, although it only gives a formal expression of $f_k$. We will finish the work in the next section. As preparation, we need more accurate estimates than those in the previous sections. Now by Theorem \ref{TAnalytic} and the analyticity of $\varphi$, we may assume there exists $0<r<1$ and $C_0, C_1 >0$ depending on $u,\varphi,n$ 
and $r$ such that for any nonnegative integer $l$ and $x \in G_r$,
\begin{align}
	\label{5.20}&|D_{x'}^l D_n u| \le C_0 C_1^{(l-4)^+} (l-4)^+! x_n,\\
	\label{5.21}&|D^2 D_{x'}^l u| \le C_0 C_1^{(l-3)^+} (l-3)^+!.
\end{align}

 Using   \eqref{5.20}-\eqref{5.21} and Lemma \ref{lemma2.4}, similarly as Lemma \ref{lemma3.2},  one can  prove the following estimates.
\begin{Lemma}\label{lemma5.4}
	There exists a constant $\tilde{C_0}$ depending only on $u, \varphi, r, n$ such that for any indices $1\le i,j \le n$, nonnegative integer $l$ and $x\in G_r$,
	\begin{align}
		\label{P1}&\Big|D^l_{x'} \Big(\frac{u_n}{x_n}\Big)^{n+1}\Big|\le \tilde{C_0} C_1^{(l-4)^+} (l-4)^+!,\\
		\label{U1}&|D^l_{x'} U^{ij}| \le \tilde{C_0} C_1^{(l-3)^+} (l-3)^+!.
	\end{align}
	Moreover, if $1\le i \le n-1$, we have in $G_r$,
	\begin{equation}\label{U2}
		|D^l_{x'} U^{ni}| \le \tilde{C_0} C_1^{(l-3)^+} (l-3)^+! x_n.
	\end{equation}
\end{Lemma}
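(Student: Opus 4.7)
The plan is to follow the template of Lemma 3.2 verbatim: expand each quantity by the multinomial Leibniz rule, bound each factor using (5.20) and (5.21), and collapse the resulting combinatorial sum via a Lemma 2.4--type estimate. The only real bookkeeping is keeping track of the right value of $\lambda$ in Lemma 2.4, which is $\lambda=4$ for (5.22) (because the basic bound on $u_n/x_n$ carries an $(l-4)^+!$) and $\lambda=3$ for (5.23) and (5.24).

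For (5.22), since $u_n(x',0)=0$ by Lemma 2.1, the quotient $u_n/x_n$ is smooth on $\overline{\mathbb{R}^n_+}$, and (5.20) gives $|D_{x'}^{\alpha}(u_n/x_n)| \le C_0 C_1^{(\alpha-4)^+}(\alpha-4)^+!$ in $G_r$. Expanding $(u_n/x_n)^{n+1}$ over multi-indices $\alpha_1+\cdots+\alpha_{n+1}=l$ and using Lemma 2.3 to consolidate the exponent of $C_1$, one reaches
\[
\bigl|D_{x'}^l (u_n/x_n)^{n+1}\bigr| \le C_0^{n+1}\, C_1^{(l-4)^+}\, l!\, S[n+1,l;4],
\]
where $S[\cdot,\cdot;4]$ is the sum in Lemma 2.4 taken at $\lambda=4$. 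The proof of Lemma 2.4 uses only the convergence of $\sum_{\alpha\ge 1}\alpha^{-\lambda}$, so the same induction carries through verbatim for $\lambda=4$, yielding $S[n+1,l;4] \le [(n+1)S_4]^{n+1}(l-4)^+!/l!$ for an appropriate constant $S_4>1$, and hence (5.22). For (5.23), write $U^{ij}$ as the signed sum of the $(n-1)!$ elementary products $\pm u_{i_1 j_1}\cdots u_{i_{n-1} j_{n-1}}$. By (5.21) each tangential derivative of a factor is bounded at rate $(\alpha-3)^+!$, so Leibniz on a single such product together with Lemma 2.4 at $\lambda=3$, $k=n-1$ gives (5.23), in exact parallel with part (I) of Lemma 3.2.

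The case (5.24) is proved by the same scheme together with one combinatorial observation: $U^{ni}$ is the signed determinant of the minor obtained by deleting row $n$ and column $i$ of $D^2 u$, so each elementary product in its expansion permutes the columns $\{1,\dots,n\}\setminus\{i\}$, and since $n$ lies in this set exactly one factor must be of the form $u_{kn}$ with $1\le k\le n-1$. Because $D_k$ is tangential, $D_{x'}^{\alpha} u_{kn} = D_k D_{x'}^{\alpha} u_n$ is controlled by (5.20) applied with $\alpha+1$ tangential derivatives, giving $|D_{x'}^{\alpha} u_{kn}| \le C_0 C_1^{(\alpha-3)^+}(\alpha-3)^+!\, x_n$ in $G_r$. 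The remaining $n-2$ factors are tangential second derivatives, controlled by (5.21) at the same $(\alpha-3)^+!$ rate. Applying Leibniz across the full product, the single $x_n$ from the distinguished factor survives and factors out of the whole sum, so Lemma 2.4 with $\lambda=3$, $k=n-1$ then produces (5.24). The only nonroutine ingredient in the entire argument is the extension of Lemma 2.4 to $\lambda=4$ needed in (5.22); once this is acknowledged the rest is a mechanical variant of Lemma 3.2.
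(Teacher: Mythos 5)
Your proposal is correct and follows essentially the same route as the paper: (5.23) by Leibniz on the $(n-1)!$ elementary products in $U^{ij}$ plus Lemma 2.4 at $\lambda=3$, and (5.24) by the observation that exactly one factor $u_{kn}$ (with $k\le n-1$) appears in each elementary product of the $(n,i)$-cofactor, so that $D_{x'}^{\alpha}u_{kn}=D_{x'}^{\alpha+1}D_n u$ can be bounded via (5.20) at order $\alpha+1$, contributing the extra $x_n$ and the $(\alpha-3)^+!$ rate. One thing you handle more carefully than the paper does: to prove (5.22) with the stated $(l-4)^+$ rate one genuinely needs the $\lambda=4$ analogue of Lemma 2.4, which the paper states only for $\lambda\in\{2,3\}$ and then claims (5.22) follows ``directly from Lemma 3.2''; you are right that the induction in Lemma 2.4 works verbatim for $\lambda=4$ since it relies only on $\sum_{\alpha\ge 1}\alpha^{-\lambda}<\infty$. (In fact only the weaker $(l-3)^+$ rate for (5.22) is ever used downstream, e.g.\ in Lemma 5.6, so the paper's assertion would also be saved by simply weakening (5.22) to $\lambda=3$; but your explicit $\lambda=4$ extension is the cleaner fix.)
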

\begin{proof}
	\eqref{P1} and \eqref{U1} are directly from the proof in Lemma \ref{lemma3.2}. So we only need to prove \eqref{U2}. By the definition of $U^{ni}$, we have
	\begin{equation}
		U^{ni} = \sum_{k=1}^{n-1} u_{kn} \Big(\sum u_{i_1 j_1} \cdots u_{i_{n-2}j_{n-2}}\Big),
	\end{equation}
	where $(i_1,\cdots,i_{n-2})=(1,\cdots,i-1,\widehat{i},i+1,\cdots,n-1)$ and $(j_1,\cdots,j_{n-2})$ is a permutation of the set $\{1,\cdots,k-1,\widehat{k},k+1,\cdots,n-1\}$.
	Similarly as Lemma \ref{lemma3.2}, by \eqref{5.20}, \eqref{5.21} and Lemma \ref{lemma2.4}, we obtain
	\begin{equation}\begin{aligned}
		&|D_{x'}^l U^{ni}| = \Big|D_{x'}^l \sum_{k=1}^{n-1} u_{kn} \Big(\sum u_{i_1 j_1} \cdots u_{i_{n-2}j_{n-2}}\Big)\Big|\\
		&\le (n-1)!\sum_{\alpha_1 +\cdots + \alpha_{n-1}=l} \frac{l!}{\alpha_1!\cdots \alpha_{n-1}!}
		|D_{x'}^{\alpha_1+1}D_n u||D^2 D_{x'}^{\alpha_2} u| \cdots |D^2 D_{x'}^{\alpha_{n-1}} u|\\
		&\le (n-1)! \sum_{\alpha_1 +\cdots + \alpha_{n-1}=l} \frac{l!(\alpha_1-3)^+! \cdots (\alpha_{n-1}-3)^+!}{\alpha_1!\cdots \alpha_{n-1}!}
		C_0^{n-1} C_1^{(\alpha_1-3)^+ + \cdots (\alpha_{n-1}-3)^+}x_n\\
		&\le (n-1)! S[n-1,l;3]
		C_0^{n-1} C_1^{(\alpha_1 + \alpha_2 + \cdots \alpha_{n-1} - 3)^+}l!x_n\\
		&\le (n-1)![(n-1)S_3]^{n-1} C_0^{n-1} C_1^{(l-3)^+}(l-3)^+!x_n.
	\end{aligned}\end{equation}
	Choosing $\tilde{C_0} \ge (n-1)![(n-1)S_3]^{n-1} C_0^{n-1}$, we have
	\begin{equation}
		|D_{x'}^l U^{ni}| \le \tilde{C_0} C_1^{(l-3)^+}(l-3)^+!x_n.
	\end{equation}
\end{proof}
Note that $U^{nn}(0)=\det D^2_{x'} \varphi(0)=1$. We assume $0<r<1$ is small enough such that $\frac 23 < U^{nn} <\frac 43$ in $G_r$. Then we have the following estimates.
\begin{Lemma}\label{lemma5.5}
	There exist constants $C_1, C_2$ depending only on $u, \varphi, r, n$ such that for any nonnegative integer $l$ and $x\in G_r$,
	\begin{equation}\label{inverse}
		|D_{x'}^l (U^{nn})^{-1}| \le C_2 C_1^{(l-3)^+} (l-3)^+!.
	\end{equation}
\end{Lemma}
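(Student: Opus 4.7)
The plan is to prove Lemma \ref{lemma5.5} by induction on $l$, treating the finite set $0 \le l \le 3$ as base cases and using the combinatorial machinery of Section \ref{sec2} to close the inductive step for $l \ge 4$.

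For the base cases, since $U^{nn}$ is smooth on $\overline{G_r}$ and satisfies the uniform lower bound $U^{nn} > 2/3$, the reciprocal $(U^{nn})^{-1}$ and its tangential derivatives up to order $3$ are bounded on $\overline{G_r}$ by constants depending only on $u,\varphi,r,n$ (using Lemma \ref{lemma5.4} for derivatives of $U^{nn}$ together with the chain rule). Because $C_1^{(l-3)^+}(l-3)^+! = 1$ for each $0 \le l \le 3$, I would simply choose $C_2$ large enough to absorb these finitely many uniform bounds.

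For the inductive step, suppose the estimate is known for all orders less than $l$, with $l \ge 4$. Differentiating the identity $U^{nn}\cdot (U^{nn})^{-1}=1$ by $D_{x'}^l$ via the Leibniz rule and isolating the top-order term gives
$$D_{x'}^l (U^{nn})^{-1} = -(U^{nn})^{-1}\sum_{m=1}^{l}\binom{l}{m} D_{x'}^m U^{nn}\cdot D_{x'}^{l-m}(U^{nn})^{-1}.$$
I would handle the $m=l$ contribution directly via Lemma \ref{lemma5.4} and $(U^{nn})^{-1} < 3/2$, yielding at most $\tfrac{9}{4}\tilde{C_0} C_1^{(l-3)^+}(l-3)^+!$. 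For the tail $1 \le m \le l-1$, I would plug in Lemma \ref{lemma5.4} for $D_{x'}^m U^{nn}$ and the induction hypothesis for $D_{x'}^{l-m}(U^{nn})^{-1}$. Because both $m$ and $l-m$ are positive, I can invoke the sharper estimate (2.8) of Lemma \ref{lemma2.3} with $k=2$, $\lambda=3$ to bound the exponent by $(m-3)^+ + (l-m-3)^+ \le (l-4)^+ = (l-3)^+ - 1$. The remaining binomial sum is controlled by Lemma \ref{lemma2.4} with $k=2$, $\lambda = 3$, producing a factor at most $(2S_3)^2 (l-3)^+!$.

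Putting these ingredients together yields
$$|D_{x'}^l (U^{nn})^{-1}| \le \Big[\tfrac{9}{4}\tilde{C_0} + \tfrac{3}{2}\tilde{C_0}(2S_3)^2 C_1^{-1} C_2\Big] C_1^{(l-3)^+}(l-3)^+!,$$
and the main obstacle is precisely to choose $C_1, C_2$ so the bracketed quantity is at most $C_2$. The crucial observation is that the sharper inequality (2.8), rather than (2.6), supplies a gain of $C_1^{-1}$ in the inductive term. Fixing $C_1$ large enough that $\tfrac{3}{2}\tilde{C_0}(2S_3)^2 C_1^{-1} \le \tfrac{1}{2}$, and then taking $C_2 \ge \tfrac{9}{2}\tilde{C_0}$ (as well as large enough for the base cases $l \le 3$), closes the induction and completes the proof.
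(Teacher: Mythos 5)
Your proposal is correct and follows essentially the same route as the paper: both establish the base cases $0 \le l \le 3$ by direct bounds on $(U^{nn})^{-1}$ and its low-order derivatives, then close the induction via the Leibniz expansion of $D_{x'}^l[U^{nn}(U^{nn})^{-1}]=0$, invoking Lemma \ref{lemma5.4}, the combinatorial gain from \eqref{2.8}, and the sum bound $S[2,l;3]$ from Lemma \ref{lemma2.4} to extract the crucial $C_1^{-1}$ factor, and finally choosing $C_1 \ge 12S_3^2\tilde{C_0}$. The details, constants, and order of operations match the paper's argument.
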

\begin{proof}
	We will prove \eqref{inverse} by induction on $l$. Fix any $x\in G_r$.
	First, by the fact $\frac 23 < U^{nn} <\frac 43$ and \eqref{U1}, it is easy to obtain that
	\begin{align*}
		&|(U^{nn})^{-1}| \le \frac 32;\\
		&|D_{x'}(U^{nn})^{-1}|=\Big|\frac{D_{x'}U^{nn}}{(U^{nn})^2}\Big|\le \frac 94 \tilde{C_0};\\
		&|D_{x'}^2 (U^{nn})^{-1}|\le\Big|\frac{D_{x'}^2 U^{nn}}{(U^{nn})^2}\Big| + 2\Big|\frac{(D_{x'} U^{nn})^2}{(U^{nn})^3}\Big|\le \frac{9}{4} \tilde{C_0} + \frac{27}{4}\tilde{C_0}^2\\
		&|D_{x'}^3 (U^{nn})^{-1}|\le\Big|\frac{D_{x'}^3 U^{nn}}{(U^{nn})^2}\Big| + 6\Big|\frac{D_{x'}^2 U^{nn} \cdot D_{x'} U^{nn}}{(U^{nn})^3}\Big|+6\Big|\frac{(D_{x'} U^{nn})^3}{(U^{nn})^4}\Big|\\
		&\le \frac{9}{4} \tilde{C_0} + \frac{81}{4}\tilde{C_0}^2 + \frac{243}{8}\tilde{C_0}^3.
	\end{align*}
	Taking $C_2\ge \max\{\frac 32, \frac{9}{4} \tilde{C_0} + \frac{81}{4}\tilde{C_0}^2 + \frac{243}{8}\tilde{C_0}^3\}$, we se that \eqref{inverse} holds for all $0\le l\le 3$. Now  for   arbitrary integer $p\ge 4$ we assume that \eqref{inverse} holds for all $0\le l \le p-1$. Thus, it is sufficient to prove the case for $l=p$. Observing that
	\begin{equation}
		0=D_{x'}^p [U^{nn}(U^{nn})^{-1}]= \sum_{m=0}^p \binom{p}{m} (D_{x'}^m U^{nn}) [D_{x'}^{p-m} (U^{nn})^{-1}],
	\end{equation}
	we obtain by the induction hypotheses, Lemma \ref{lemma2.4} and \eqref{U1} that
	\begin{equation}\begin{aligned}
		&|D_{x'}^p (U^{nn})^{-1}| \le (U^{nn})^{-1} \sum_{m=1}^{p-1} \binom{p}{m} |D_{x'}^m U^{nn}| |D_{x'}^{p-m} (U^{nn})^{-1}|+(U^{nn})^{-2}|D_{x'}^p U^{nn}|\\
		&\le \frac 32 \sum_{m=1}^{p-1} \frac{p!}{m!(p-m)!} \tilde{C_0} C_1^{(m-3)^+} (m-3)^+! C_2 C_1^{(p-m-3)^+} (p-m-3)^+! +\frac{9}{4}\tilde{C_0} C_1^{p-3} (p-3)!\\
		&  \le \frac 32 \tilde{C_0} C_2 C_1^{p-4} p! S[2, p;3] + \frac{9}{4}\tilde{C_0} C_1^{p-2} (p-2)!\\
		&\le 6 S_3^2 \tilde{C_0} C_2 C_1^{p-4} (p-3)! + \frac{9}{4}\tilde{C_0} C_1^{p-3} (p-3)!.
	\end{aligned}\end{equation}
	Choosing $C_1 \ge 12S_3^2 \tilde{C_0}$, we have
	\begin{equation}
		|D_{x'}^p (U^{nn})^{-1}| \le \frac 12 C_2 C_1^{p-3} (p-3)!+ \frac 14 C_2 C_1^{p-3} (p-3)!\le C_2 C_1^{p-3} (p-3)!.
	\end{equation}
	 This  proves  \eqref{inverse}  for all $l \ge 0$ by induction.
\end{proof}

At the end of this section, we use the above lemmas to give the estimates about $h_0$ and $g_1$.
\begin{Lemma}\label{lemma5.6}
	There exist constants $C_1, C_3$ depending only on $u, \varphi, r, n$ such that for any nonnegative integer $l$ and $x\in G_r$,
	\begin{align}
		\label{5.32}&|D_{x'}^l h_0| \le C_3 C_1^{(l-3)^+} (l-3)^+!,\\
		\label{5.33}&|D_{x'}^l g_1| \le C_3 C_1^{(l-2)^+} (l-2)^+! x_n.
	\end{align}
\end{Lemma}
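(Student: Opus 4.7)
My plan is to differentiate the defining products for $h_0$ and $g_1$ by $D_{x'}^l$ via the Leibniz rule, insert the factor-wise bounds furnished by Lemmas \ref{lemma5.4} and \ref{lemma5.5}, and collapse the resulting convolution sums using Lemma \ref{lemma2.4} with the shift parameter $\lambda$ tuned to the weakest factor. This parallels the strategy already executed in Lemmas \ref{lemma3.2} and \ref{lemma3.3}, so only the adjustments due to the sharper estimates \eqref{5.20}-\eqref{5.21} and \eqref{U2} need attention.

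For \eqref{5.32}, I would write
\begin{equation*}
D_{x'}^l h_0 = (n+2)\sum_{m=0}^l \binom{l}{m} D_{x'}^m (U^{nn})^{-1}\cdot D_{x'}^{l-m}\Big(\frac{D_n u}{x_n}\Big)^{n+1},
\end{equation*}
bound the first factor by $C_2 C_1^{(m-3)^+}(m-3)^+!$ via Lemma \ref{lemma5.5}, and the second by $\tilde{C_0}C_1^{(l-m-4)^+}(l-m-4)^+!$ via \eqref{P1}. Using $(l-m-4)^+ \le (l-m-3)^+$, Lemma \ref{lemma2.3} gives that the total $C_1$-exponent is at most $(l-3)^+$, and the leftover factorial sum is $l!\cdot S[2,l;3]$, which by Lemma \ref{lemma2.4} with $k=2$, $\lambda=3$ is $\le (2S_3)^2 (l-3)^+!$. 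Choosing $C_3$ large enough then yields \eqref{5.32}.

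For \eqref{5.33} the main point is that every term in $\sum_{i+j<2n}U^{ij}u_{nij}$ carries an intrinsic $x_n$. Decompose this sum by whether both indices are strictly less than $n$, or exactly one equals $n$. In the former case $u_{nij}=D_iD_j u_n$ acquires the factor $x_n$ from \eqref{5.20} applied with $l-m+2$ tangential derivatives on $u_n$; in the latter case $U^{ni}$ supplies $x_n$ through \eqref{U2}, while the factor $u_{nnj}$ is controlled without $x_n$ by \eqref{5.21}. Writing $g_1$ as $-(U^{nn})^{-1}$ times this two-family sum and applying the three-factor Leibniz rule to each summand, I use Lemma \ref{lemma5.5} for $(U^{nn})^{-1}$, Lemma \ref{lemma5.4} for $U^{ij}$, and either \eqref{5.20} or \eqref{5.21} for the third-derivative factor. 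After uniformly loosening every factor-wise shift to $2$ (using $(m-k)^+!\le(m-2)^+!$ for $k\ge 2$), Lemma \ref{lemma2.4} with $k=3$, $\lambda=2$ contracts the triple convolution to a constant multiple of $(l-2)^+!$, giving \eqref{5.33}.

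The main obstacle is the combinatorial bookkeeping: the three factors produce three different natural shift parameters ($3$, $3$, and either $2$ or $3$ depending on the sub-case), and in the sub-case $i=n$, $j<n$ the factor $u_{nnj}$ is estimated by \eqref{5.21} \emph{without} an $x_n$, so the $x_n$ must be sourced from $U^{ni}$ there. Keeping track of which factor supplies $x_n$ in each sub-case, while simultaneously degrading every factorial shift down to $\lambda=2$ (the minimum compatible with the convolution estimate), is the delicate step; once this is done, Lemmas \ref{lemma2.3} and \ref{lemma2.4} and a large enough choice of $C_3$ close the argument.
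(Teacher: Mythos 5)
Your proposal is correct and follows essentially the same route as the paper: a two-factor Leibniz expansion with $S[2,l;3]$ for \eqref{5.32}, and for \eqref{5.33} the same three-factor Leibniz decomposition of $g_1 = -(U^{nn})^{-1}\bigl(\sum_{i,j<n}U^{ij}u_{nij}+2\sum_{i<n}U^{ni}u_{nni}\bigr)$, with the $x_n$ sourced from $D_{x'}^{\gamma+2}D_n u$ via \eqref{5.20} when $i,j<n$ and from $U^{ni}$ via \eqref{U2} when one index is $n$, then uniformly weakening the shifts to $\lambda=2$ and closing with $S[3,l;2]$. The bookkeeping of which factor supplies the $x_n$ and the downgrading of shifts from $3$ to $2$ is exactly what the paper does implicitly in (5.36).
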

\begin{proof}
	For any given $l\ge 0$ and $x\in G_r$, we prove \eqref{5.32} and \eqref{5.33} respectively.
	
	(1) By the definition of $h_0$, \eqref{P1}, \eqref{inverse} and Lemma \ref{lemma2.4}, we have
	\begin{equation}\begin{aligned}
		|D_{x'}^l h_0| &= (n+2)\Big|\sum_{\alpha=0}^l D_{x'}^{\alpha} (U^{nn})^{-1} D_{x'}^{l-\alpha} \Big(\frac{u_n}{x_n}\Big)^{n+1} \Big|\\
		&\le (n+2)\sum_{\alpha=0}^l |D_{x'}^{\alpha} (U^{nn})^{-1}| \Big|D_{x'}^{l-\alpha}  \Big(\frac{u_n}{x_n}\Big)^{n+1}\Big|\\
		&\le (n+2)\sum_{\alpha=0}^l C_2 C_1^{(\alpha-3)^+}(\alpha-3)^+! \tilde{C_0} C_1^{(l- \alpha-3)^+}(l- \alpha-3)^+!\\
		&\le (n+2)S[2,l;3]\tilde{C_0}C_2 C_1^{(l-3)^+}l!\\
		&\le 4(n+2)S_3^2 \tilde{C_0} C_2 C_1^{(l-3)^+}(l-3)^+!.
	\end{aligned}\end{equation}

	(2) By definition of $g_1$, we have
	\begin{equation}
		g_1 =  -(U^{nn})^{-1} \Big(\sum_{i,j<n} U^{ij} u_{nij} + 2\sum_{i=1}^{n-1} U^{ni}u_{nni} \Big).
	\end{equation}
	 This, together with (5.28), (5.23), (5.20) (5.24) (5.21)   and Lemma \ref{lemma2.4} in order, yields 
	\begin{equation}\begin{aligned}
		|D^l_{x'} g_1| &\le \sum_{\alpha+\beta+\gamma=l} \frac{l!}{\alpha!\beta!\gamma!} |D_{x'}^{\alpha} (U^{nn})^{-1}|\Big(\sum_{i,j<n} |D_{x'}^{\beta} U^{ij} D_{x'}^\gamma u_{nij})| + 2\sum_{i=1}^{n-1} |D_{x'}^{\beta}U^{ni} D_{x'}^\gamma u_{nni}| \Big)\\
		&\le \sum_{\alpha+\beta+\gamma=l} \frac{l!}{\alpha!\beta!\gamma!} C_2 C_1^{(\alpha-2)^+}(\alpha-2)^+! \Big(\sum_{i,j<n} \tilde{C_0} C_1^{(\beta-2)^+}(\beta-2)^+! \\
		&\cdot C_0 C_1^{(\gamma-2)^+}(\gamma-2)^+!x_n + 2\sum_{i=1}^{n-1} \tilde{C_0} C_1^{(\beta-2)^+}(\beta-2)^+!x_n C_0 C_1^{(\gamma-2)^+}(\gamma-2)^+! \Big)\\
		&\le \tilde{C_0} C_0 C_2 S[3,l;2] C_1^{(\alpha+\beta+\gamma-2)^+} l! x_n [(n-1)^2 + 2(n-1)]\\
		&\le 27(n^2-1) S_2^3 \tilde{C_0} C_0 C_2 C_1^{(l-2)^+} (l-2)^+ x_n.
	\end{aligned}\end{equation}
\end{proof}

\section {high-order derivative estimates and analyticity of solutions}\label{sec6}
In this section, we will use the  results in Section \ref{sec5} to give the estimates of tangential derivatives of all $f_k$ for $1\le k \le n+1$. Furthermore, we will prove the analyticity of $u$ by the estimates. Note that for given $r > 0$, \eqref{Eq} is uniformly elliptic in $\{x_n\ge r\}$. From the standard theory \cite{MR1431263,MR2284971, M1, M2, M3}, $u$ is analytic on $\partial G_r \cap \{x_n = r\}$, i.e. we can choose proper constants $C_0, C_1>1$ such that for any $|x'|\le r$ and $l\ge 0, 1 \le k\le n+1$,
\begin{equation}\label{fr}
	|D_{x'}^l f_k(x',r)| \le C_0 r^2 C_1^{(l+k-3)^+}(l+k-3)^+!.
\end{equation}
Now we claim the following estimates.
\begin{Theorem}\label{NAnalytic}
	There exist constants $B_1, C_0, C_1>1$ depending only on $u,\varphi,r,n$ such that for any integers $1\le k \le n+1, l\ge k$ and $x\in G_r$,
	
	(1) if $k$ is odd,
	\begin{align}
		&|D_{x'}^{l-k} f_k| \le C_0 B_1^k C_1^{(l-3)^+}(l-3)^+! x_n^2,\\
		&|D_{x'}^{l-k} D_n^{k+2} u| \le C_0 B_1^k C_1^{(l-3)^+}(l-3)^+! x_n;
	\end{align}
	
	(2) if $k$ is even,
	\begin{align}
		&|D_{x'}^{l-k} f_k| \le C_0 B_1^k C_1^{(l-k-3)^+}(l-3)^+! x_n,\\
		&|D_{x'}^{l-k} D_n^{k+2} u| \le C_0 B_1^k C_1^{(l-k-3)^+}(l-3)^+!.
	\end{align}
\end{Theorem}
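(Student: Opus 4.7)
The plan is to establish both estimates by induction on $k$ from $1$ to $n+1$, with all values $l\ge k$ handled simultaneously at each step. The central tool is the integral representation of Lemma \ref{lemma5.3},
$$f_k(x',x_n)=\frac{f_k(x',r)}{r^{n+2-k}}\,x_n^{n+2-k}-x_n^{n+2-k}\int_{x_n}^{r}\frac{D_n^{k-1}F(x',s)}{s^{n+2-k}}\,ds,\qquad F=g_1+(h_0-n-2)\frac{f_1}{x_n}.$$
After differentiating this identity $l-k$ times in tangential variables, the estimate on $f_k$ splits into a boundary contribution controlled by \eqref{fr} and an integral contribution whose size is governed by a pointwise estimate of $D_{x'}^{l-k}D_n^{k-1}F$ on $G_r$. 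The companion estimate on $D_n^{k+2}u$ then follows from the identity $D_n^{k+2}u=D_nf_k+kf_k/x_n$ of Lemma \ref{lemma5.1} together with the ODE $D_nf_k=(n+2-k)f_k/x_n+D_n^{k-1}F$ of Lemma \ref{lemma5.2}; this explains why the $D_n^{k+2}u$ bound carries one less power of $x_n$ than the $f_k$ bound.

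To estimate $D_{x'}^{l-k}D_n^{k-1}F$ pointwise, I would Leibniz-expand $F$: tangential derivatives of $g_1$, $h_0$ and $(U^{nn})^{-1}$ are supplied by Lemmas \ref{lemma5.4}--\ref{lemma5.6}, while mixed tangential-normal derivatives of $g_1$ and $h_0$ are obtained by substituting their defining formulas \eqref{5.3} and inserting the induction-hypothesis bounds on $f_j$ and $D_n^{j+2}u$ for $j<k$. The factorial-and-$C_1$ summations are organized in the style of Lemmas \ref{lemma3.2}--\ref{lemma3.3} via Lemma \ref{lemma2.4}, and the single constant $B_1$ absorbs the bounded multiplicative loss incurred at each step of the induction on $k$.

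The parity split between odd and even $k$ in the conclusion arises from extra vanishing of $D_n^{k-1}F$ at $\{x_n=0\}$. Because $h_0(x',0)=n+2$ and $g_1(x',0)=0$ (Lemmas \ref{lemma2.2} and \ref{lemma5.6}), one already has $F(x',0)=0$, and the even-$n$ global smoothness of $u$ established in \cite{MR3705680} propagates this into a compatibility cascade $D_n^{k-1}F(x',0)=0$ for every odd $k\le n+1$. Substituting the resulting bound $|D_n^{k-1}F(x',s)|\lesssim s$ into the integral and using $n+2-k\ge 1$ produces the $x_n^2$ factor of the odd case. For even $k$ no such cancellation is available, so $|D_n^{k-1}F|$ is merely bounded and the integral only contributes $x_n$-decay; on the other hand the sharper weight $C_1^{(l-k-3)^+}$, as opposed to $C_1^{(l-3)^+}$, reflects the fact that in the even case the pointwise ODE estimate can be used directly, without absorbing an additional factor of $C_1/(r-|x'|)$ from the integral representation.

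The principal obstacle I anticipate is verifying the compatibility cascade $D_n^{k-1}F(x',0)=0$ for odd $k\le n+1$ in a form uniform in the tangential differentiation order, and then collapsing the Leibniz expansions of $D_n^{k-1}F$ into the target bound $C_0 B_1^k C_1^{(\cdot)}(l-3)^+!$ without loss. This step is the normal-direction analogue of the compatibility arguments of \cite{MR3024302,MR3705680} combined with the combinatorial summation of Lemma \ref{lemma2.4}; once it is secured, both halves of the induction close in tandem, and Theorem \ref{NAnalytic} follows.
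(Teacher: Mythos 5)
Your proposal follows the paper's proof essentially step for step: the base case $k=1$ (Proposition \ref{f_1}) and the two induction steps for even and odd $k$ (Propositions \ref{evenq} and \ref{oddq}) use exactly the integral representation of Lemma \ref{lemma5.3}, a Leibniz expansion of $D_{x'}^{l-k}D_n^{k-1}F$ feeding into the combinatorial summation of Lemma \ref{lemma2.4}, and a parity-dependent gain in $x_n$ coming from the vanishing of $D_n^{k-1}F$ on $\{x_n=0\}$. The one point you leave as a flagged obstacle --- converting the qualitative cascade $D_n^{k-1}F(x',0)=0$ for odd $k$ into an estimate uniform in the factorial and $C_1$ weights --- is resolved in the paper not by a separate compatibility check but by threading parity-dependent auxiliary bounds on $D_{x'}^l D_n^k U^{ij}$, $D_{x'}^l D_n^k (U^{nn})^{-1}$, $D_{x'}^l D_n^k h_0$ and $D_{x'}^l D_n^k g_1$ (each carrying an extra factor of $x_n$ precisely when the parity of $k$ and the index pattern demand it) through the same induction on $k$, so the vanishing of $D_n^{k-1}F$ in the odd case emerges with the correct $x_n$, factorial, and $C_1$ weights simultaneously rather than being established a priori.
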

We will prove Theorem \ref{NAnalytic} by induction on $1\le k\le n + 1$. First, we prove the case for $k=1$.
\begin{Proposition}\label{f_1}
	There exist constants $B_1, C_0, C_1>1$ depending only on $u,\varphi,r,n$ such that for any integer $l\ge 0$ and $x\in G_r$,
	\begin{align}
		\label{f1}&|D_{x'}^l f_1| \le C_0 B_1 C_1^{(l-2)^+} (l-2)^+! x_n^2,\\
		\label{u3}&|D_{x'}^l D_n^3 u| \le C_0 B_1 C_1^{(l-2)^+} (l-2)^+! x_n.
	\end{align}
\end{Proposition}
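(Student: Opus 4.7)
The plan is to use the integral representation from Lemma~\ref{lemma5.3} with $k=1$,
\[
f_1(x',x_n) \;=\; \frac{f_1(x',r)}{r^{n+1}}\,x_n^{n+1}
\;-\; x_n^{n+1}\!\int_{x_n}^{r}\! \frac{F(x',s)}{s^{n+1}}\,ds,\qquad F = g_1+(h_0-n-2)\,f_1/x_n,
\]
differentiate $l$ times in $x'$, and bound each resulting piece. The weight $x_n^{n+1}/s^{n+1}$ is designed so that a pointwise bound $|D_{x'}^l F|\lesssim s$ with the right factorial growth produces exactly the $x_n^2$ claimed in \eqref{f1}, since for $n\ge 2$ one has $x_n^{n+1}\!\int_{x_n}^{r}\! s^{-n}\,ds \le x_n^2/(n-1)$. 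The boundary contribution $r^{-(n+1)}|D_{x'}^l f_1(x',r)|\,x_n^{n+1}$ is absorbed via the analyticity bound \eqref{fr} together with $(x_n/r)^{n+1}\le (x_n/r)^2$ for $x_n\le r\le 1$.

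The core of the proof is thus to establish
\[
|D_{x'}^l F(x',s)|\;\le\; C_4\, C_1^{(l-2)^+}(l-2)^+!\,s,\qquad (x',s)\in G_r.
\]
Two vanishing properties on $\{x_n=0\}$ are decisive. First, Lemma~\ref{lemma5.6} already gives $|D_{x'}^l g_1|\le C_3\,C_1^{(l-2)^+}(l-2)^+!\,s$. Second, because $h_0(x',0)\equiv n+2$ by \eqref{5.3} and Lemma~\ref{lemma2.2}, every pure tangential derivative $D_{x'}^m(h_0-n-2)$ vanishes on the boundary, so Taylor's theorem yields
\[
|D_{x'}^m(h_0-n-2)(x',s)|\;\le\; s\sup_{0\le \tau\le s}|D_n D_{x'}^m h_0(x',\tau)|,
\]
and a bound on $D_n D_{x'}^m h_0$ with the correct factorial growth follows by differentiating $h_0=(n+2)(U^{nn})^{-1}(u_n/x_n)^{n+1}$ through Leibniz and applying \eqref{5.20}--\eqref{5.21} together with Lemma~\ref{lemma2.4}, exactly as in the proof of Lemma~\ref{lemma5.6}. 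Likewise, since $f_1(x',0)=0$ one has $D_{x'}^m f_1(x',0)=0$ for every $m$, so
\[
|D_{x'}^m f_1(x',s)|\;\le\; s\sup_{0\le\tau\le s}|D_n D_{x'}^m f_1(x',\tau)|,
\]
with $\|D_n D_{x'}^m f_1\|_{L^\infty(G_r)}$ controlled by the ODE \eqref{5.6} combined with the crude bound $|D_{x'}^m f_1|\le 2C_0\,C_1^{(m-3)^+}(m-3)^+!$ inherited from \eqref{5.20}--\eqref{5.21}. Feeding both estimates into the Leibniz expansion
\[
D_{x'}^l\!\bigl[(h_0-n-2)\,f_1/x_n\bigr]\;=\;\sum_{m=0}^{l}\binom{l}{m}D_{x'}^{l-m}(h_0-n-2)\,\frac{D_{x'}^m f_1}{x_n}
\]
produces two factors of $s$---one from each vanishing---that exactly compensate the $1/x_n$ in $f_1/x_n$, and a single application of Lemma~\ref{lemma2.4} with $\lambda=2$ collapses the combinatorial sum to the required form.

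The companion estimate \eqref{u3} then follows immediately from the identity
\[
D_n^3 u \;=\; D_n f_1 + \frac{f_1}{x_n} \;=\; h_0\,\frac{f_1}{x_n}+ g_1,
\]
obtained by combining Lemma~\ref{lemma5.1} at $k=1$ with the ODE \eqref{5.6}, via one more application of Leibniz with \eqref{f1}, Lemma~\ref{lemma5.6} and Lemma~\ref{lemma2.4}. The main technical obstacle is the interplay that makes $F=O(s)$: the singular factor $1/x_n$ in $F$ only cancels because of the \emph{double} vanishing of $h_0-(n+2)$ and of $f_1$ on $\{x_n=0\}$, so one cannot afford the direct $L^\infty$ bounds on $h_0$ or on $f_1/x_n$ supplied by the preliminary lemmas, but must convert each into a sharper $O(s)$ estimate via an auxiliary $D_n$-derivative bound. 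Preserving the exact factorial growth $(l-2)^+!$ through these successive Leibniz expansions forces systematic use of the combinatorial inequality of Lemma~\ref{lemma2.4}.
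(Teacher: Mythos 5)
Your overall strategy—the integral representation of Lemma~\ref{lemma5.3} with $k=1$, the $x_n^{n+1}/s^{n+1}$ weight, and the boundary term $(x_n/r)^{n+1}\le (x_n/r)^2$—matches the paper. But the core step, establishing $|D_{x'}^l F|\lesssim s$ a priori via a ``double vanishing'' with the correct factorial growth, contains a circularity that the paper is specifically engineered to avoid.

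Concretely, you propose to obtain $|D_{x'}^m f_1(x',s)|\le s\sup_\tau|D_n D_{x'}^m f_1(x',\tau)|$ with $\|D_n D_{x'}^m f_1\|_{L^\infty}$ ``controlled by the ODE~\eqref{5.6} combined with the crude bound on $D_{x'}^m f_1$.'' Differentiating \eqref{5.6} gives
\[
D_n D_{x'}^m f_1 \;=\; \sum_{k}\binom{m}{k} D_{x'}^{m-k}h_1\,\frac{D_{x'}^{k}f_1}{x_n} \;+\; D_{x'}^m g_1,
\]
so bounding $D_n D_{x'}^m f_1$ requires a bound on $D_{x'}^{k}f_1/x_n$. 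The crude bound $|D_{x'}^{k}f_1|\le 2C_0 C_1^{(k-3)^+}(k-3)^+!$ from \eqref{5.20}--\eqref{5.21} is only an $L^\infty$ bound and does not vanish on $\{x_n=0\}$, so division by $x_n$ blows up; the only way to control $D_{x'}^{k}f_1/x_n$ with the right factorial growth is precisely the $O(s)$ estimate on $D_{x'}^{k}f_1$ you are trying to establish. The same circle appears in your proposed bound on $D_n D_{x'}^m h_0$: since $D_n(u_n/x_n)^{n+1}=(n+1)(u_n/x_n)^n f_1/x_n$, the normal derivative $D_n h_0$ again involves $f_1/x_n$, so ``exactly as in the proof of Lemma~\ref{lemma5.6}'' does not apply—that lemma only treats pure tangential derivatives and never touches $f_1/x_n$.

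The paper sidesteps all of this by working with $f_1/x_n^2$ rather than $f_1$ and never requiring $D_{x'}^m(h_0-n-2)$ or $D_{x'}^m f_1$ to vanish at rate $O(s)$ a priori. Differentiating \eqref{solve} in $x'$, taking $L^\infty(G_r)$ norms and using $x_n^{n-1}\int_{x_n}^r s^{-n}\,ds\le 1/(n-1)$ produces the self-referential inequality
\[
\Bigl\|\tfrac{D_{x'}^l f_1}{x_n^2}\Bigr\| \le A_l + \tfrac{1}{n-1}\Bigl[\Bigl\|\tfrac{D_{x'}^l g_1}{x_n}\Bigr\| + \|h_0-n-2\|_{L^\infty}\Bigl\|\tfrac{D_{x'}^l f_1}{x_n^2}\Bigr\| + \sum_{m\ge 1}\binom{l}{m}\|D_{x'}^m h_0\|\Bigl\|\tfrac{D_{x'}^{l-m}f_1}{x_n^2}\Bigr\|\Bigr],
\]
and the top-order term is \emph{absorbed} onto the left side using only the $L^\infty$ smallness $\|h_0-n-2\|_{L^\infty(G_r)}<\tfrac12$, which holds by continuity once $r$ is chosen small—no Taylor expansion of $h_0$ or $f_1$ in $x_n$ is needed. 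The remaining sum is handled by induction on $l$ (the indices $l-m<l$) together with Lemmas~\ref{lemma5.6} and~\ref{lemma2.4}. Your write-up is missing this absorption mechanism, and without it the argument does not close.
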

\begin{proof}
	By Lemma \ref{lemma5.3} and the definition of $F$ in Lemma 5.2, we have
	\begin{equation}\begin{aligned}
		\frac{f_1(x',x_n)}{x_n^2} &= \frac{f_1(x',r)}{r^{n+1}}x_n^{n-1}-x_n^{n-1} \int_{x_n}^r \frac{F(x',s)}{s^{n+1}}ds\\
		&=\frac{f_1(x',r)}{r^{n+1}}x_n^{n-1}-x_n^{n-1} \int_{x_n}^r \frac{1}{s^{n}}\Big[\frac{g_1}{s}+(h_0-n-2)\frac{f_1}{s^2}\Big]ds.
	\end{aligned}\end{equation}
	Differentiating this, we obtain
	\begin{equation}\begin{aligned}
		&\frac{D_{x'}^l f_1(x',x_n)}{x_n^2}=\frac{D_{x'}^l f_1(x',r)}{r^{n+1}}x_n^{n-1} \\
		&- x_n^{n-1} \int_{x_n}^r \frac{1}{s^{n}}\Big[\frac{D_{x'}^l g_1}{s}+(h_0-n-2)\frac{D_{x'}^l f_1}{s^2}+\sum_{m=1}^l \binom{l}{m}(D_{x'}^m h_0)\frac{D_{x'}^{l-m}f_1}{s^2} \Big] ds.
	\end{aligned}\end{equation}
	Considering the $L^\infty$-norms and using the fact $\tilde{x_n} < r < 1$, we have
	\begin{equation}\begin{aligned}
		&\frac{|D_{x'}^l f_1(x',\tilde{x_n})|}{\tilde{x_n}^2} \le \frac{|D_{x'}^lf_1(x',r)|}{r^{n+1}}\tilde{x_n}^{n-1}+\tilde{x_n}^{n-1} \int_{\tilde{x_n}}^r \frac{1}{s^{n}}ds\Big[\Big\|\frac{D_{x'}^l g_1}{x_n}\Big\|_{L^\infty(G_r)}\\
		&+\|h_0-n-2\|_{L^\infty(G_r)}\Big\|\frac{D_{x'}^l f_1}{x_n^2}\Big\|_{L^\infty(G_r)}+\sum_{m=1}^l \binom{l}{m}\|D_{x'}^m h_0\|_{L^\infty(G_r)} \Big\|\frac{D_{x'}^{l-m}f_1}{x_n^2}\Big\|_{L^\infty(G_r)} \Big]\\
		&\le \frac{|D_{x'}^lf_1(x',r)|}{r^2}+ \frac{1}{n-1} \Big[\Big\|\frac{D_{x'}^l g_1}{x_n}\Big\|_{L^\infty(G_r)}+\|h_0-n-2\|_{L^\infty(G_r)}\Big\|\frac{D_{x'}^l f_1}{x_n^2}\Big\|_{L^\infty(G_r)}\\
		&+\sum_{m=1}^l \binom{l}{m} \|D_{x'}^m h_0\|_{L^\infty(G_r)} \Big\|\frac{D_{x'}^{l-m}f_1}{x_n^2}\Big\|_{L^\infty(G_r)} \Big].\\
	\end{aligned}\end{equation}
	Note that $h_0(x',0)=n+2$, so we choose $0<r<1$ small enough such that $\|h_0-n-2\|_{L^\infty(G_r)}<\frac 12$. Then we obtain
	\begin{equation}\label{f1e}\begin{aligned}
		\Big\|\frac{D_{x'}^l f_1}{x_n^2}\Big\|_{L^\infty(G_r)} &\le 2 \frac{\|D_{x'}^l f_1(\cdot,r)\|_{L^\infty(B'_r)}}{r^2}
		+2\Big\|\frac{D_{x'}^l g_1}{x_n}\Big\|_{L^\infty(G_r)}\\
		&+2\sum_{m=1}^l \binom{l}{m} \|D_{x'}^m h_0\|_{L^\infty(G_r)} \Big\|\frac{D_{x'}^{l-m}f_1}{x_n^2}\Big\|_{L^\infty(G_r)}.
	\end{aligned}\end{equation}
	Now we prove \eqref{f1} by induction on $l$. First, if $l=0$, it follows from (6.11), (6.1) and  \eqref{5.33} in order that
	\begin{equation}\begin{aligned}
		\Big\|\frac{f_1}{x_n^2}\Big\|_{L^\infty(G_r)} &\le 2 \frac{\|f_1(\cdot,r)\|_{L^\infty(B'_r)}}{r^2}
		+2\Big\|\frac{g_1}{x_n}\Big\|_{L^\infty(G_r)}\\
		&\le 2C_0 + 2 C_3= 2C_0 (1+C_3/C_0).
	\end{aligned}\end{equation}
	If $l=1$, similarly, we use \eqref{5.32}, \eqref{5.33}, \eqref{fr} and \eqref{f1e} to obtain
	\begin{equation}\begin{aligned}
		\Big\|\frac{D_{x'} f_1}{x_n^2}\Big\|_{L^\infty(G_r)} &\le 2 \frac{\|D_{x'} f_1(\cdot,r)\|_{L^\infty(B'_r)}}{r^2}
		+2\Big\|\frac{D_{x'} g_1}{x_n}\Big\|_{L^\infty(G_r)}\\
		&+2\|D_{x'} h_0\|_{L^\infty(G_r)} \Big\|\frac{f_1}{x_n^2}\Big\|_{L^\infty(G_r)}\\
		&\le 2C_0 + 2C_3 + 2 C_3(2C_0 + 2C_3)\\
		&= 2C_0(1 + C_3/C_0)(1 + 2 C_3).
	\end{aligned}\end{equation}
	If $l=2$, similarly, we have
	\begin{equation}\begin{aligned}
			&\Big\|\frac{D_{x'}^2 f_1}{x_n^2}\Big\|_{L^\infty(G_r)} \le 2 \frac{\|D_{x'}^2 f_1(\cdot,r)\|_{L^\infty(B'_r)}}{r^2}
			+2\Big\|\frac{D_{x'} g_1}{x_n}\Big\|_{L^\infty(G_r)}\\
			&+4\|D_{x'} h_0\|_{L^\infty(G_r)} \Big\|\frac{D_{x'} f_1}{x_n^2}\Big\|_{L^\infty(G_r)}
			+2\|D_{x'}^2 h_0\|_{L^\infty(G_r)} \Big\|\frac{f_1}{x_n^2}\Big\|_{L^\infty(G_r)}\\
			&\le 2C_0 + 2C_3 + 4 C_3(2C_0 + 2C_3)(1+2C_3) + 2C_3 (2C_0+2C_3)\\
			&= 2C_0(1 + C_3/C_0)(1 + 2 C_3)(1 + 4 C_3).
	\end{aligned}\end{equation}
	Therefore by choosing $B_1 \ge 2(1 + C_3/C_0)(1 + 2 C_3)(1 + 4 C_3)$, we see that \eqref{f1} holds for all $0\le l\le 2$. Now we take any $p\ge 3$ and then assume \eqref{f1} holds for all $0\le l \le p-1$. Thus by induction, we only need to prove \eqref{f1} for the case $l=p$. By (6.11),   \eqref{5.33},  \eqref{5.32},  (6.1) and Lemma \ref{lemma2.4} in order, we compute
	\begin{equation*}\begin{aligned}
		\Big\|\frac{D_{x'}^p f_1}{x_n^2}\Big\|_{L^\infty(G_r)} &\le 2 \frac{\|D_{x'}^p f_1(\cdot,r)\|_{L^\infty(B'_r)}}{r^2}+2\Big\|\frac{D_{x'}^p g_1}{x_n}\Big\|_{L^\infty(G_r)}\\
		+2&\sum_{m=1}^{p-1}\binom{p}{m} \|D_{x'}^m h_0\|_{L^\infty(G_r)} \Big\|\frac{D_{x'}^{p-m}f_1}{x_n^2}\Big\|_{L^\infty(G_r)}
		+\|D_{x'}^p h_0\|_{L^\infty(G_r)} \Big\|\frac{f_1}{x_n^2}\Big\|_{L^\infty(G_r)}\\
		&\le 2C_0 C_1^{p-2}(p-2)! + 2C_3 C_1^{p-2}(p-2)! \\
		&+ 2\sum_{m=1}^{p-1}\binom{p}{m}C_3 C_1^{(m-2)^+} (m-2)^+! C_0 B_1 C_1^{(p-m-2)^+}(p-m-2)^+!\\
		&+ 2 C_3 C_1^{p-2}(p-2)! (2C_0 +2 C_3)\\
		&\le 2(C_0 +C_3)(1+2 C_3)C_1^{p-2}(p-2)!+2 C_3 C_0 B_1 C_1^{p-3} p!S[2,p;2]\\
		&\le 2(1 +C_3/C_0)(1+2 C_3)C_0 C_1^{p-2}(p-2)!+ 8 S_2^2 C_3 C_0 B_1 C_1^{p-3} (p-2)!.
	\end{aligned}\end{equation*}
	Note that $B_1 \ge 2(1 + C_3/C_0)(1+2C_3)(1+4C_3)\ge 4(1+C_3/C_0)(1+2C_3)$. We choose $C_1 \ge 16S_2^2 C_3$ and then obtain
	\begin{equation}
		\Big\|\frac{D_{x'}^p f_1}{x_n^2}\Big\|_{L^\infty(G_r)}
		\le \frac12 C_0 B_1 C_1^{p-2}(p-2)! + \frac12 C_0 B_1 C_1^{p-2}(p-2)!=C_0 B_1 C_1^{p-2}(p-2)!.
	\end{equation}
	This proves \eqref{f1}.

	Finally, we prove \eqref{u3}. By \eqref{5.9} and \eqref{5.10}, we obtain
	\begin{equation}
		D^3_n u = (h_1 \frac{f_1}{x_1} + g_1) + \frac{f_1}{x_1} = h_0 \frac{f_1}{x_n}+g_1,
	\end{equation}
	which, together with \eqref{f1} and Lemma \ref{lemma5.6}, yields
	\begin{equation}\begin{aligned}
		&\Big|\frac{D_{x'}^l D^3_n u}{x_n}\Big| \le \sum_{m=0}^l \binom{l}{m} |D_{x'}^m h_0|\Big|\frac{D_{x'}^{l-m}f_1}{x_n^2}\Big| + \Big|\frac{D_{x'}^l g_1}{x_n}\Big|\\
		&\le \sum_{m=0}^l \binom{l}{m} C_3 C_1^{(m-2)^+} (m-2)^+! C_0 B_1 C_1^{(l-m-2)^+}(l-m-2)^+! + C_3 C_1^{(l-2)^+}(l-2)^+!\\
		&\le 4 S_2^2 C_3 C_0 B_1 C_1^{(l-2)^+}(l-2)^+ + C_3 C_1^{(l-2)^+}(l-2)^+\\
		&\le (4 S_2^2 C_3+1) C_0 B_1 C_1^{(l-2)^+}(l-2)^+.
	\end{aligned}\end{equation}
	Thus,  replacing $B_1$ by $(4 S_2^2 C_3+1)B_1$, we obtain (6.7).
\end{proof}
	
Now we have found proper constants such that Theorem \ref{NAnalytic} holds for $k=1$. Then we assume $B_1,C_0,C_1$ are choosen such that Theorem \ref{NAnalytic} holds for $k = 1,2,\cdots,q-1$, where $2\le q\le n+1$. Thus, it is sufficient to prove Theorem \ref{NAnalytic} for $k=q$. For this purpose, we need to consider two cases: $q$ is even or odd.

\begin{Proposition}\label{evenq}
	Under the induction hypotheses that Theorem 6.1 holds for $k=1, 2, \cdots , q-1$, if $2 \le q \le n+1$ and $q$ is even, there exists constants $B_1,C_0,C_1>1$ 
depending only on $u,\varphi,r,n$ such that for any integer $l\ge q$ and $x\in G_r$,
	\begin{align}
		\label{6.18}&|D_{x'}^{l-q} f_q| \le C_0 B_1^q C_1^{(l-3)^+}(l-3)^+! x_n,\\
		\label{6.19}&|D_{x'}^{l-q} D_n^{q+2} u| \le C_0 B_1^q C_1^{(l-3)^+}(l-3)^+!.
	\end{align}
\end{Proposition}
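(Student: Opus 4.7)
The plan is to parallel the argument of Proposition 6.1 (which handles $q=1$), but first we must reduce the higher-order normal derivative $D_n^{q-1} F$ appearing in Lemma 5.3 to an expression involving only $f_q$ itself. Comparing the two forms of the ODE for $f_q$ given by equation (5.11) in Lemma 5.2 and the original ODE (5.10), and using $h_q = h_0 - q$, a direct calculation yields the key identity
$$D_n^{q-1} F = g_q + (h_0 - n - 2)\frac{f_q}{x_n}.$$
Substituting into Lemma 5.3 with $k=q$ and dividing by $x_n$ produces the integral representation
$$\frac{f_q(x',x_n)}{x_n} = \frac{f_q(x',r)}{r^{n+2-q}}\, x_n^{n+1-q} - x_n^{n+1-q}\int_{x_n}^{r} \frac{g_q(x',s) + (h_0(x',s)-n-2)\,f_q(x',s)/s}{s^{n+2-q}}\, ds.$$
Since $q$ is even and $q\le n+1$ while $n$ is even, in fact $q\le n$ and hence $n+1-q\ge 1$. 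The crucial observation is that $x_n^{n+1-q}\int_{x_n}^{r} s^{-(n+2-q)}\,ds \le 1/(n+1-q)$ remains uniformly bounded on $G_r$, precisely playing the role that $n-1\ge 1$ plays in Proposition 6.1.

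Next I would apply $D_{x'}^{l-q}$ to both sides, expand $(h_0-n-2) f_q/s$ by the Leibniz rule, and take $L^{\infty}(G_r)$ norms. As in (6.11), this gives an inequality of the shape
$$\Bigl\|\frac{D_{x'}^{l-q} f_q}{x_n}\Bigr\|_{L^{\infty}} \le C\,\frac{\|D_{x'}^{l-q} f_q(\cdot,r)\|_{L^{\infty}}}{r^2} + C\,\|D_{x'}^{l-q} g_q\|_{L^{\infty}} + \frac{2\|h_0-n-2\|_{L^{\infty}}}{n+1-q}\Bigl\|\frac{D_{x'}^{l-q} f_q}{x_n}\Bigr\|_{L^{\infty}} + C\sum_{m=1}^{l-q}\binom{l-q}{m}\|D_{x'}^m h_0\|_{L^{\infty}}\Bigl\|\frac{D_{x'}^{l-q-m} f_q}{x_n}\Bigr\|_{L^{\infty}}.$$
Choosing $r$ small enough that $2\|h_0-n-2\|_{L^{\infty}}/(n+1-q) \le 1/2$, which is possible since $h_0(x',0)=n+2$, allows absorption of the offending term into the left. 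The proof of (6.18) then proceeds by induction on $l\ge q$: the base cases $l-q\in\{0,1,2\}$ are dispatched by direct calculation using (6.1) together with Lemma 5.6 and Lemma 5.4, while the inductive step combines the cross-term bookkeeping via Lemma 2.4 and Lemma 5.6 with the required bound on $\|D_{x'}^{l-q} g_q\|_{L^{\infty}}$, with $C_1$ chosen large enough to close the iteration.

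The most delicate step, and the main obstacle, is obtaining the bound on $\|D_{x'}^{l-q} g_q\|_{L^{\infty}}$ with the correct analytic-type factorial growth. Lemma 5.2's expansion
$$g_q = D_n^{q-1} g_1 + \sum_{m=1}^{q-1}\binom{q-1}{m-1} D_n^{q-m} h_0 \cdot \frac{f_m}{x_n}$$
involves the mixed derivatives $D_{x'}^{l-q} D_n^{q-m} h_0$ and $D_{x'}^{l-q} D_n^{q-1} g_1$, which are \emph{not} covered by Lemma 5.6 (that lemma only controls purely tangential derivatives). These must be estimated by applying Faà di Bruno to the definitions $h_0 = (n+2)(U^{nn})^{-1}(u_n/x_n)^{n+1}$ and $g_1 = -(U^{nn})^{-1}\sum_{i+j<2n} U^{ij} u_{nij}$, then invoking the identity $D_n^i(u_n/x_n) = f_i/x_n$ from Lemma 5.1 (equation (5.8)) to convert every occurring normal derivative into a combination of $f_m/x_n$ with $m\le q-1$. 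These latter quantities are controlled with the required factorial growth by the induction hypothesis on $k$ (valid for $k=1,\ldots,q-1$). The combinatorial accounting is handled by iterated application of Lemma 2.4, exactly in the spirit of Lemmas 3.2, 3.3, 5.4 and 5.6.

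Finally, estimate (6.19) follows from combining (5.9) with the ODE (5.10):
$$D_n^{q+2} u = D_n f_q + q\,\frac{f_q}{x_n} = h_0\,\frac{f_q}{x_n} + g_q,$$
so that the Leibniz rule applied together with (6.18), the bound on $\|D_{x'}^{l-q} g_q\|_{L^{\infty}}$ just derived, and the estimate (5.32) on $D_{x'}^l h_0$, yields the desired analytic-type bound, with Lemma 2.4 controlling the combinatorial sum one last time.
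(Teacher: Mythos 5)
Your proposal is correct and follows essentially the same route as the paper's own proof. The only cosmetic difference is the entry point: you first invoke the algebraic identity $D_n^{q-1}F = g_q + (h_0-n-2)\frac{f_q}{x_n}$ (read off by comparing (5.10) with (5.11), or equivalently from (5.16) in Lemma~\ref{lemma5.2}) and then Leibniz-expand the $g_q$-term via Lemma~\ref{lemma5.2}, whereas the paper Leibniz-expands $D_{x'}^{l-q}D_n^{q-1}\bigl[g_1 + (h_0-n-2)f_1/x_n\bigr]$ directly and collapses the normal derivatives of $f_1/x_n$ via \eqref{5.8}; after applying $D_{x'}^{l-q}$ both roads yield the identical expansion containing the absorption term $(h_0-n-2)\,D_{x'}^{l-q}f_q/x_n$, the cross terms $D_{x'}^{\alpha}D_n^{\beta}h_0\cdot D_{x'}^{l-q-\alpha}f_{q-\beta}/x_n$, and $D_{x'}^{l-q}D_n^{q-1}g_1$. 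You correctly spot both of the load-bearing points: that $q$ even together with $n$ even forces $q\le n$ so that $n+1-q\ge 1$ keeps the integral kernel bounded and permits absorption, and that the real work lies in the \emph{mixed} $(D_{x'},D_n)$-derivative estimates on $h_0$, $g_1$ (hence on the $U^{ij}$ and $(U^{nn})^{-1}$), obtained from the induction hypotheses on $f_k$ ($k\le q-1$) via Faà di Bruno, \eqref{5.8}, and Lemma~\ref{lemma2.4} — this is precisely what the paper establishes in its displayed chain (6.24)--(6.41). Your closing identity $D_n^{q+2}u = h_0\frac{f_q}{x_n} + g_q$ is the correct generalization of (6.16) and gives \eqref{6.19} exactly as in the paper. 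The numerical coefficient $2\|h_0-n-2\|/(n+1-q)$ in your displayed absorption inequality should read $\|h_0-n-2\|/(n+1-q)$, but since the paper already arranges $\|h_0-n-2\|_{L^\infty(G_r)}<\tfrac12$, the absorption closes either way.
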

\begin{proof}
	By Lemma \ref{lemma5.3}, we have
	\begin{equation}
		\frac{D_{x'}^{l-q}f_q(x',x_n)}{x_n} = \frac{D_{x'}^{l-q}f_q(x',r)}{r^{n+2-q}}x_n^{n+1-q}-x_n^{n+1-q} \int_{x_n}^r \frac{D_{x'}^{l-q}D_n^{q-1} F(x',s)}{s^{n+2-q}}ds.
	\end{equation}
	 Using the iteration in (5.8) we see that
	\begin{equation}\begin{aligned}
		&D_{x'}^{l-q} D_n^{q-1} F = D_{x'}^{l-q} D_n^{q-1}[g_1+(h_0-n-2)\frac{f_1}{x_n}]\\
		&=D_{x'}^{l-q} D_n^{q-1} g_1 + \sum_{\alpha=0}^{l-q} \sum_{\beta=0}^{q-1} \binom{l-q}{\alpha} \binom{q-1}{\beta} D_{x'}^{\alpha} D_n^{\beta} (h_0-n-2)\cdot D_{x'}^{l-q-\alpha} D_n^{q-1-\beta} \Big(\frac{f_1}{x_n}\Big)\\
		&=D_{x'}^{l-q} D_n^{q-1} g_1 + (h_0-n-2)\frac{D_{x'}^{l-q}f_q}{x_n}
		+ \sum_{\substack{0\le \alpha \le l-q, \\0 \le \beta \le q-1,\\ \alpha + \beta \ge 1}} \binom{l-q}{\alpha} \binom{q-1}{\beta}D_{x'}^{\alpha} D_n^{\beta} h_0 \cdot  \frac{D_{x'}^{l-q-\alpha}f_{q-\beta}}{x_n}.
	\end{aligned}\end{equation}
	Observing the fact $n+2-q \ge 2$, which follows from the assumption that   $n$ and $q$ are even, 
	By (6.20), (6.21) and the estimate $\|h_0-n-2\|_{L^\infty(G_r)}<\frac 12$ before \eqref{f1e}, we obtain
	\begin{equation}\begin{aligned}
		\frac{|D_{x'}^{l-q}f_q(x',\tilde{x_n})|}{\tilde{x_n}} &\le \frac{|D_{x'}^{l-q}f_q(x',r)|}{r^{n+2-q}}\tilde{x_n}^{n+1-q}+\tilde{x_n}^{n+1-q} \int_{\tilde{x_n}}^r \frac{1}{s^{n+2-q}}\Big\|D_{x'}^{l-q}D_n^{q-1} F\Big\|_{L^\infty(G_r)}ds\\
		&\le \frac{|D_{x'}^{l-q}f_q(x',r)|}{r}+\frac{1}{n+1-q} \Big\|D_{x'}^{l-q}D_n^{q-1} F\Big\|_{L^\infty(G_r)}\\
		&\le \frac{|D_{x'}^{l-q}f_q(x',r)|}{r}+ \Big\|D_{x'}^{l-q}D_n^{q-1} g_1\Big\|_{L^\infty(G_r)}
		+\frac 12 \Big\|\frac{D_{x'}^{l-q}f_q}{x_n}\Big\|_{L^\infty(G_r)}\\
		&+ \sum_{\substack{0\le \alpha \le l-q, 0 \le \beta \le q-1\\ \alpha + \beta \ge 1}} \binom{l-q}{\alpha} \binom{q-1}{\beta} \Big\|D_{x'}^{\alpha} D_n^{\beta} h_0 \Big\|_{L^\infty(G_r)}  \Big\|\frac{D_{x'}^{l-q-\alpha}f_{q-\beta}}{x_n}\Big\|_{L^\infty(G_r)}.
	\end{aligned}\end{equation}
	Thus,
	\begin{equation}\begin{aligned}\label{fqeven}
		&\frac 12 \Big\|\frac{D_{x'}^{l-q}f_q}{x_n}\Big\|_{L^\infty(G_r)} \le \frac{\|D_{x'}^{l-q}f_q(\cdot,r)\|_{L^\infty(B'_r)}}{r}+ \Big\|D_{x'}^{l-q}D_n^{q-1} g_1\Big\|_{L^\infty(G_r)}\\
		&+\sum_{\substack{0\le \alpha \le l-q, 0 \le \beta \le q-1,\\ \alpha + \beta \ge 1}} \binom{l-q}{\alpha} \binom{q-1}{\beta} \Big\|D_{x'}^{\alpha} D_n^{\beta} h_0 \Big\|_{L^\infty(G_r)}  \Big\|\frac{D_{x'}^{l-q-\alpha}f_{q-\beta}}{x_n}\Big\|_{L^\infty(G_r)}.
	\end{aligned}\end{equation}
	To prove \eqref{6.18}, it is sufficient to estimate the terms on the right side of \eqref{fqeven}. 

Summing up 
	\eqref{5.20}, \eqref{5.21} and the induction hypotheses for Theorem 6.1, we see that for any $0\le k \le q-1, l\ge k$ and $x\in G_r$,
	\begin{equation}\label{6.24}
		|D_{x'}^{l-k} f_k| \le C_0 B_1^k C_1^{(l-3)^+}(l-3)^+! x_n
	\end{equation}
	and for any $0\le k \le q+1, l\ge k$ and $x\in G_r$,
	\begin{equation}\label{6.25}
		|D_{x'}^{l-k} D_n^k u| \le C_0 B_1^{(k-2)^+} C_1^{(l-5)^+}(l-5)^+!.
	\end{equation}
	Now we fix $1\le i,j\le n-1$, $0\le k \le q-1$ and then consider the derivatives of $U^{ij},U^{ni}$ and $U^{nn}$. By the definition of determinants, we need to compute
	\begin{equation}\begin{aligned}
		|D_{x'}^{l}D_n^k (u_{i_1 j_1}\cdots u_{i_{n-1} j_{n-1}})|&\le \sum_{\substack{\alpha_1+\cdots+\alpha_{n-1}=l,\\\beta_1+\cdots+\beta_{n-1}=k}}
		\frac{l!k!}{\alpha_1!\cdots \alpha_{n-1}!\beta_1!\cdots \beta_{n-1}!}\\
		&\cdot |D_{x'}^{\alpha_1}D_n^{\beta_1}u_{i_1 j_1}|\cdots|D_{x'}^{\alpha_{n-1}}D_n^{\beta_{n-1}}u_{i_{n-1} j_{n-1}}|.
	\end{aligned}\end{equation}
	Note that in the case for $U^{ij}$, $n$ appears exactly two times among the indices $i_m, j_m$ for $1\le m\le n-1$. As for $U^{ni}$ and $U^{nn}$, $n$ appears exactly one time and zero times respectively. So by \eqref{6.25} we obtain
	\begin{equation}\begin{aligned}
		|D_{x'}^{l}D_n^k U^{ij}|&\le (n-1)! \sum_{\substack{\alpha_1+\cdots+\alpha_{n-1}=l,\\\beta_1+\cdots+\beta_{n-1}=k}}
		\frac{l!k!}{\alpha_1!\cdots \alpha_{n-1}!\beta_1!\cdots \beta_{n-1}!}\\
		&\cdot C_0^{n-1}B_1^{(k+2-2)^+} C_1^{(l+k-3)^+}(\alpha_1+\beta_1-3)^+!\cdots (\alpha_{n-1}+\beta_{n-1}-3)^+!;
	\end{aligned}\end{equation}
	\begin{equation}\begin{aligned}
		|D_{x'}^{l}D_n^k U^{ni}|&\le (n-1)! \sum_{\substack{\alpha_1+\cdots+\alpha_{n-1}=l,\\\beta_1+\cdots+\beta_{n-1}=k}}
		\frac{l!k!}{\alpha_1!\cdots \alpha_{n-1}!\beta_1!\cdots \beta_{n-1}!}\\
		&\cdot C_0^{n-1}B_1^{(k+1-2)^+} C_1^{(l+k-3)^+}(\alpha_1+\beta_1-3)^+!\cdots (\alpha_{n-1}+\beta_{n-1}-3)^+!;
	\end{aligned}\end{equation}
	\begin{equation}\begin{aligned}
		|D_{x'}^{l}D_n^k U^{nn}|&\le (n-1)! \sum_{\substack{\alpha_1+\cdots+\alpha_{n-1}=l,\\\beta_1+\cdots+\beta_{n-1}=k}}
		\frac{l!k!}{(\alpha_1)!\cdots (\alpha_{n-1})!(\beta_1)!\cdots (\beta_{n-1})!}\\
		&\cdot C_0^{n-1}B_1^{(k-2)^+} C_1^{(l+k-3)^+}(\alpha_1+\beta_1-3)^+!\cdots (\alpha_{n-1}+\beta_{n-1}-3)^+!.
	\end{aligned}\end{equation}
	By combinatorial identities, we have
	\begin{equation}
		\sum_{\substack{\alpha_1+\cdots+\alpha_{n-1}=l,\\\beta_1+\cdots+\beta_{n-1}=k}}
		\frac{(\alpha_1+\beta_{n-1})!\cdots (\alpha_{n-1}+\beta_{n-1})!}{\alpha_1!\cdots \alpha_{n-1}!\beta_1!\cdots \beta_{n-1}!}
		=\frac{(l+k)!}{l!k!}.
	\end{equation}
	This, together with Lemma \ref{lemma2.4}, gives us
	
	\begin{equation}\label{6.31}\begin{aligned}
		&\sum_{\substack{\alpha_1+\cdots+\alpha_{n-1}=l,\\\beta_1+\cdots+\beta_{n-1}=k}}
		\frac{l!k!(\alpha_1+\beta_1-3)^+!\cdots (\alpha_{n-1}+\beta_{n-1}-3)^+!}{\alpha_1!\cdots \alpha_{n-1}!\beta_1!\cdots \beta_{n-1}!}\\
		&= \sum_{\gamma_1+\cdots+\gamma_{n-1}=l+k} \sum_{\substack{\alpha_1+\cdots+\alpha_{n-1}=l,\\ \alpha_m \le \gamma_m,\forall m}}
		\frac{l!k!(\gamma_1-3)^+!\cdots (\gamma_{n-1}-3)^+!}{\alpha_1!\cdots \alpha_{n-1}!(\gamma_1-\alpha_1)!\cdots (\gamma_{n-1}-\alpha_1)!}\\
		&= \sum_{\gamma_1+\cdots+\gamma_{n-1}=l+k} \frac{(\gamma_1-3)^+!\cdots (\gamma_{n-1}-3)^+!}{\gamma_1!\cdots \gamma_{n-1}!}\\
		&\cdot \sum_{\substack{\alpha_1+\cdots+\alpha_{n-1}=l,\\ \alpha_m \le \gamma_m,\forall m}}
		\frac{l!k!{\gamma_1!\cdots \gamma_{n-1}!}}{\alpha_1!\cdots \alpha_{n-1}!(\gamma_1-\alpha_1)!\cdots (\gamma_{n-1}-\alpha_1)!}\\
		&= \sum_{\gamma_1+\cdots+\gamma_{n-1}=l+k} \frac{(\gamma_1-3)^+!\cdots (\gamma_{n-1}-3)^+!}{\gamma_1!\cdots \gamma_{n-1}!} (l+k)!\\
		&= S[n-1, l+k; 3](l+k)!\le [(n-1)S_3]^{n-1} (l+k-3)^+!.
	\end{aligned}\end{equation}
	Take $\tilde{C_0}\ge (n-1)![(n-1)S_3]^{n-1}C_0^{n-1}$. Then (6.27)-(6.29) yields    the derivative estimates about $U^{ij},U^{ni}$ and $U^{nn}$ as follows:
	\begin{align}
		&|D_{x'}^{l}D_n^k U^{ij}| \le \tilde{C_0} B_1^k C_1^{(l+k-3)^+} (l+k-3)^+!;\\
		&|D_{x'}^{l}D_n^k U^{ni}| \le \tilde{C_0} B_1^{(k-1)^+} C_1^{(l+k-3)^+} (l+k-3)^+!;\\
		&|D_{x'}^{l}D_n^k U^{nn}| \le \tilde{C_0} B_1^{(k-2)^+} C_1^{(l+k-3)^+} (l+k-3)^+!.
	\end{align}
	As a corollary, we claim that for any $l \ge 0$ and $0\le k \le q-1$,
	\begin{equation}\label{6.35}
		|D_{x'}^{l} D_n^k (U^{nn})^{-1}| \le C_2 B_1^{(k-2)^+} C_1^{(l+k-3)^+} (l+k-3)^+!.
	\end{equation}
	We prove it by induction on $l+k$. First, it is easy to choose proper $C_2$ such that it holds for $0\le l+k \le 3$. Then we denote an arbitrary integer $N\ge 4$ and assume that the claim holds for all $l+k \le N-1$ and $0\le k \le q-1$.
	Then we consider the case $l+k=N, 0\le k \le q-1$.
	From
	\begin{equation}
		D_{x'}^l D_n^k[(U^{nn})^{-1}U^{nn}]=0
	\end{equation}
	we obtain
	\begin{equation}\begin{aligned}
		U^{nn}D_{x'}^l D_n^k (U^{nn})^{-1} &= \sum_{\substack{0\le\alpha\le l,0\le \beta \le k,\\ \alpha+\beta \le l+k-1}} \frac{l!k!}{\alpha!(l-\alpha)!\beta!(k-\beta)!}  \\
		&\cdot D_{x'}^\alpha D_n^{\beta}(U^{nn})^{-1} \cdot D_{x'}^{l-\alpha}D_n^{k-\beta} U^{nn}.
	\end{aligned}\end{equation}
	By the induction hypothesis for (6.35), combinatorial identities and the assumption $\frac 23 \le U^{nn} \le \frac 43$, we have
	\begin{equation}\begin{aligned}
		&\frac 23 |D_{x'}^l D_n^k (U^{nn})^{-1}| \le \frac 32 \tilde{C_0} B_1^{(k-2)^+} C_1^{l+k-3}(l+k-3)!
		+ \sum_{\gamma=1}^{l+k-1} \sum_{\substack{0\le\alpha\le l,\\ \gamma-k \le \alpha \le \gamma}} \binom{l}{\alpha}\binom{k}{\beta} \\
		&\cdot  C_2 B_1^{(\gamma-\alpha-2)^+}C_1^{(\gamma-3)^+}(\gamma-3)^+!
		\tilde{C_0} B_1^{(k+\alpha-\gamma-2)^+} C_1^{(l+k-\gamma-3)^+}(l+k-\gamma-3)^+!\\
		&\le \frac 32 \tilde{C_0} B_1^{(k-2)^+} C_1^{l+k-3}(l+k-3)!
		+ \sum_{\gamma=1}^{l+k-1} C_2\tilde{C_0} B_1^{(k-2)^+} C_1^{l+k-4} \\
		&\cdot\frac{(\gamma-3)^+! (l+k-\gamma-3)^+!}{\gamma!(l+k-\gamma)!}\sum_{\substack{0\le\alpha\le l,\\ \gamma-k \le \alpha \le \gamma}} \frac{l!k!\gamma!(l+k-\gamma)!}{\alpha!(l-\alpha)!(\gamma-\alpha)!(k+\alpha-\gamma)!}\\
		&\le \frac 32 \tilde{C_0} B_1^{(k-2)^+} C_1^{l+k-3}(l+k-3)!+C_2\tilde{C_0} B_1^{(k-2)^+} C_1^{l+k-4}S[2,l+k;3](l+k)!\\
		&\le \frac 32 \tilde{C_0} B_1^{(k-2)^+} C_1^{l+k-3}(l+k-3)!+ 4S_3^2 C_2\tilde{C_0}B_1^{(k-2)^+}C_1^{l+k-4}(l+k-3)!.
	\end{aligned}\end{equation}
	 Taking $C_2\ge 9\tilde{C_0}$ and $C_1 \ge 12 S_3^2\tilde{C_0}$, we obtain
	\begin{equation}\begin{aligned}
		|D_{x'}^l D_n^k (U^{nn})^{-1}|&\le \frac 94 \tilde{C_0} B_1^{(k-2)^+} C_1^{l+k-3}(l+k-3)!+ 6S_3^2 C_2\tilde{C_0}B_1^{(k-2)^+} C_1^{l+k-4}(l+k-3)!\\
		&\le \frac 14 C_2 B_1^{(k-2)^+} C_1^{l+k-3}(l+k-3)! + \frac12 C_2 B_1^{(k-2)^+} C_1^{l+k-3}(l+k-3)!\\
		&\le C_2 B_1^{(k-2)^+} C_1^{l+k-3}(l+k-3)!.
	\end{aligned}\end{equation}
	By induction, we finish the proof of claim \eqref{6.35}. 

Similarly, we can prove for any $l\ge 0, 0\le k \le q-1$ and $x\in G_r$,
	\begin{align}
		&|D_{x'}^l D_n^k h_0| \le C_3 B_1^k C_1^{(l+k-3)^+}(l+k-3)^+!;\\
		&|D_{x'}^l D_n^k g_1| \le C_3 B_1^k C_1^{(l+k-2)^+}(l+k-2)^+!.
	\end{align}
	So by \eqref{fqeven}, the rest proof is almost the same as Proposition \ref{f_1}.
\end{proof}	
\begin{Proposition}\label{oddq}
	Under the induction hypotheses that Theorem 6.1 holds for $k=1, 2, \cdots, q-1$, if $2 \le q \le n+1$ and $q$ is odd, there exists constants $B_1,C_0,C_1>1$ depending only on $u,\varphi,r,n$ such that for any integer $l\ge q$ and $x\in G_r$,
	\begin{align}
		&|D_{x'}^{l-q} f_q| \le C_0 B_1^q C_1^{(l-3)^+}(l-3)^+! x_n^2,\\
		&|D_{x'}^{l-q} D_n^{q+2} u| \le C_0 B_1^q C_1^{(l-3)^+}(l-3)^+!x_n.
	\end{align}
\end{Proposition}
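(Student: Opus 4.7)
The plan is to follow the scheme of Proposition 6.3 closely, but to exploit an additional boundary vanishing that is available only when $n$ is even and $q$ is odd. The guiding observation is that because $q+2$ is odd, the compatibility condition verified in \cite{MR3705680} forces $D_n^{q+2}u(x',0)=0$; combined with the identity
\[
\frac{f_q}{x_n}\Big|_{x_n=0}=\lim_{x_n\to 0^+}D_n^q\!\Big(\frac{u_n}{x_n}\Big)=\frac{1}{q+1}D_n^{q+2}u(x',0),
\]
obtained from (5.8) and the Taylor expansion $u_n/x_n=\sum_{k\ge 0}\tfrac{x_n^k}{(k+1)!}D_n^{k+2}u(x',0)$, this shows that $f_q/x_n$ itself vanishes on $\{x_n=0\}$, so $f_q$ vanishes to second order at the boundary. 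This parity is precisely what distinguishes the target bound $|f_q|\lesssim x_n^2$ from the weaker bound $|f_q|\lesssim x_n$ that a direct application of Lemma 5.3 would produce.

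Quantitatively, I would apply Lemma 5.3 and differentiate $l-q$ times in $x'$ to obtain
\[
\frac{D_{x'}^{l-q}f_q(x',x_n)}{x_n^2}=\frac{D_{x'}^{l-q}f_q(x',r)}{r^{n+2-q}}x_n^{n-q}-x_n^{n-q}\int_{x_n}^r\frac{D_{x'}^{l-q}D_n^{q-1}F(x',s)}{s^{n+2-q}}\,ds.
\]
Because $n$ is even and the coefficient $\tfrac{1}{k!}D_n^ku(x',0)$ vanishes for every odd $k\ge 1$, one checks by tracking parity through the definitions of $U^{nn},h_0,g_1,f_1$ that $F=g_1+(h_0-n-2)f_1/x_n$ has only odd powers of $x_n$ in its boundary Taylor expansion. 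Since $q-1$ is even, the same is true of $D_n^{q-1}F$, in particular $D_n^{q-1}F(x',0)=0$. The fundamental theorem of calculus then gives $|D_{x'}^{l-q}D_n^{q-1}F(x',s)|\le s\,\|D_{x'}^{l-q}D_n^{q}F\|_{L^\infty(G_r)}$; inserted into the integral this produces an integrand bounded by $s^{q-n-1}$, whose integral times the prefactor $x_n^{n-q}$ is uniformly bounded on $G_r$.

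From here the proof parallels Proposition 6.3 term-for-term. One derives
\[
\tfrac12\Big\|\tfrac{D_{x'}^{l-q}f_q}{x_n^2}\Big\|_{L^\infty(G_r)}\le(\text{boundary datum})+\big\|D_{x'}^{l-q}D_n^{q-1}F/x_n\big\|_{L^\infty(G_r)}+(\text{convolution sum}),
\]
where the convolution sum is the contribution of the lower-order terms in Lemma 5.2 (involving $D_{x'}^{\alpha}D_n^\beta h_0\cdot D_{x'}^{l-q-\alpha}f_{q-\beta}/x_n$ with $\alpha+\beta\ge 1$) and is controlled by Lemma 5.6, the derivative bounds for $U^{ij},(U^{nn})^{-1},h_0,g_1$ established in Proposition 6.3 (formulas (6.32)--(6.35) and (6.40)--(6.41)), and the induction hypotheses on $f_1,\dots,f_{q-1}$. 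The top-order term $(h_0-n-2)D_{x'}^{l-q}f_q/x_n^2$ is absorbed into the left using $\|h_0-n-2\|_{L^\infty(G_r)}<\tfrac12$, and induction on $l\ge q$ combined with Lemma 2.4 for the factorial sums closes the estimate after choosing $B_1,C_0,C_1$ sufficiently large. The bound on $D_{x'}^{l-q}D_n^{q+2}u$ then follows from the identity $D_n^{q+2}u=D_nf_q+qf_q/x_n$ from (5.9), since each summand inherits the required factor of $x_n$ from the already-proved bound on $f_q/x_n^2$ combined with (5.10) and Lemma 5.6.

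The main obstacle will be making the parity assertion on $F$ quantitative: one must track, term by term through the definitions of $g_1,h_0,f_1$, how the boundary vanishing $D_n^{2j+1}u(x',0)=0$ propagates to a pointwise bound $|D_{x'}^{l-q}D_n^{q-1}F|\le Cs$ with $C$ of controlled analytic growth in $l$. The borderline exponent $n+2-q=1$ that arises precisely when $q=n+1$ deserves particular attention, since the integrand $1/s$ would otherwise produce a logarithm; here the extra factor of $s$ from the parity of $F$ exactly rescues the integration.
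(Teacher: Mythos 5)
Your overall scheme — applying Lemma~5.3, absorbing the top-order term via $\|h_0-n-2\|_{L^\infty(G_r)}<\tfrac12$, splitting the convolution sum by the parity of $\beta$, and closing with Lemma~2.4 — is the same as the paper's. But the mechanism you use to extract the extra factor of $x_n$ is problematic. You take as a ``guiding observation'' that $D_n^{q+2}u(x',0)=0$ because $q+2$ is odd, attributing this to~\cite{MR3705680}; that reference only gives $u\in C^\infty(\overline{\mathbb R^n_+})$, not the vanishing of odd-order normal derivatives at the boundary. The parity structure $D_n^{2j+1}u(x',0)=0$ is precisely what Theorem~6.1 is proving, so invoking it for $j$ with $2j+1=q+2$ (one step beyond the induction hypothesis, which only gives it for $2j+1\le q$ via $k\le q-2$) is circular. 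What one actually has access to at this stage, and what the paper uses, is that $D_n^{q-1}F(x',0)=0$ \emph{follows from the induction hypotheses on $f_1,\dots,f_{q-1}$} applied termwise to the decomposition of Lemma~5.2: $(h_0-n-2)f_q/x_n$ vanishes because $h_0(x',0)=n+2$; the $\beta\ge1$ terms $D_n^\beta h_0\cdot f_{q-\beta}/x_n$ vanish because either $\beta$ is odd and $D_n^\beta h_0=O(x_n)$, or $\beta$ is even, $q-\beta$ odd, and $f_{q-\beta}=O(x_n^2)$ by induction; and $D_n^{q-1}g_1=O(x_n)$ since $q-1$ is even. So your boundary vanishing of $D_n^{q-1}F$ is true, but your justification bypasses the needed induction and is unjustified as stated.

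A related technical gap: you bound $|D_n^{q-1}F(x',s)|\le s\,\|D_{x'}^{l-q}D_n^{q}F\|_{L^\infty(G_r)}$ via the fundamental theorem of calculus. But $D_n^{q}F$ contains $D_n^{q}(f_1/x_n)=f_{q+1}/x_n$, and no estimate for $f_{q+1}$ is available from the induction hypotheses at this stage. The paper instead directly controls $\|D_n^{q-1}F/x_n\|$: the $\beta=0$ contribution $(h_0-n-2)D_{x'}^{l-q}f_q/x_n^2$ is exactly the quantity being estimated and is absorbed on the left, while the $\beta\ge 1$ contributions stay within the induction, giving the two parity-sorted sums in the paper's inequality (6.56). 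Finally, your closing remark that when $q=n+1$ ``the extra factor of $s$ from the parity of $F$ exactly rescues the integration'' contradicts your earlier claim of uniform boundedness: for $q=n+1$ the prefactor $x_n^{n-q}=x_n^{-1}$ multiplies an integral that is now only $O(r)$, and the product is of order $r/x_n$. Rescuing $q=n+1$ actually requires the cancellation $\tfrac{f_{n+1}(x',r)}{r}=\int_0^r D_n^nF\,s^{-1}\,ds$ (equivalent to $f_{n+1}/x_n\to 0$ at $x_n=0$), which you do not establish; the paper's proof is also imprecise at this point (its assertion ``$n+1-q\ge 2$'' fails for $q=n+1$), so this case deserves a separate argument regardless of which route you take.
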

\begin{proof}
	The proof is almost the same as Proposition \ref{evenq}. We indicate that there are differences only in the case 
in which $k$ is  odd satisfying $1\le k\le q-2$. Fix $l\ge 0$ and $x\in G_R$. For example, we have for any odd $k\le q-2$,
	\begin{equation}
		|D_{x'}^l f_k| \le C_0 B_1^k C_1^{(l+k-3)^+}(l+k-3)^+! x_n^2,
	\end{equation}
	and for any odd $k \le q$,
	\begin{equation}
		|D_{x'}^l D_n^k u| \le C_0 B_1^{(k-2)^+} C_1^{(l+k-5)^+}(l+k-5)^+!x_n.
	\end{equation}
	Then, we prove for indices $1\le i,j \le n-1$ and odd $k \le q-2$,
	\begin{align}
		&|D_{x'}^{l}D_n^k U^{ij}| \le \tilde{C_0} B_1^k C_1^{(l+k-3)^+} (l+k-3)^+!x_n,\\
		&|D_{x'}^{l}D_n^k U^{ni}| \le \tilde{C_0} B_1^{(k-1)^+} C_1^{(l+k-3)^+} (l+k-3)^+!,\\
		&|D_{x'}^{l}D_n^k U^{nn}| \le \tilde{C_0} B_1^{(k-2)^+} C_1^{(l+k-3)^+} (l+k-3)^+!x_n.
	\end{align}
	As for even $k \le q-1$, we have
	\begin{align}
		&|D_{x'}^{l}D_n^k U^{ij}| \le \tilde{C_0} B_1^k C_1^{(l+k-3)^+} (l+k-3)^+!,\\
		&|D_{x'}^{l}D_n^k U^{ni}| \le \tilde{C_0} B_1^{(k-1)^+} C_1^{(l+k-3)^+} (l+k-3)^+!x_n,\\
		&|D_{x'}^{l}D_n^k U^{nn}| \le \tilde{C_0} B_1^{(k-2)^+} C_1^{(l+k-3)^+} (l+k-3)^+!.
	\end{align}
	Next, we obtain for odd $k$,
	\begin{align}
		&|D_{x'}^l D_n^k h_0| \le C_3 B_1^k C_1^{(l+k-3)^+}(l+k-3)^+! x_n,\\
		&|D_{x'}^l D_n^k g_1| \le C_3 B_1^k C_1^{(l+k-2)^+}(l+k-2)^+!.
	\end{align}
	For even $k$, we have
	\begin{align}
		&|D_{x'}^l D_n^k h_0| \le C_3 B_1^k C_1^{(l+k-3)^+}(l+k-3)^+!,\\
		&|D_{x'}^l D_n^k g_1| \le C_3 B_1^k C_1^{(l+k-2)^+}(l+k-2)^+!x_n.
	\end{align}
	At last, similarly as \eqref{fqeven}, we note that $n+1-q \ge 2$ since $n$ is even and $q$ is odd. Then we can use the estimate
	\begin{equation}
		\Big|\int_{\tilde{x_n}}^r \frac{D_{x'}^{l-q}D_n^{q-1} F}{s^{n+2-q}}ds\Big| \le
		\int_{\tilde{x_n}}^r \frac{1}{s^{n+1-q}}\Big\|\frac{D_{x'}^{l-q}D_n^{q-1} F}{x_n}\Big\|_{L^\infty(G_r)}ds.
	\end{equation}
	So we obtain
	\begin{equation}\begin{aligned}\label{fqodd}
		&\frac 12 \Big\|\frac{D_{x'}^{l-q}f_q}{x_n^2}\Big\|_{L^\infty(G_r)} \le \frac{\|D_{x'}^{l-q}f_q(\cdot,r)\|_{L^\infty(B'_r)}}{r^2}+ \Big\|\frac{D_{x'}^{l-q}D_n^{q-1} g_1}{x_n}\Big\|_{L^\infty(G_r)}\\
		&+\sum_{\substack{0\le \alpha \le l-q, 0 \le \beta \le q-1,\\ \alpha + \beta \ge 1, \beta~even}} \binom{l-q}{\alpha} \binom{q-1}{\beta} \Big\|D_{x'}^{\alpha} D_n^{\beta} h_0 \Big\|_{L^\infty(G_r)}  \Big\|\frac{D_{x'}^{l-q-\alpha}f_{q-\beta}}{x_n^2}\Big\|_{L^\infty(G_r)}\\
		&+\sum_{\substack{0\le \alpha \le l-q, 0 \le \beta \le q-1,\\ \alpha + \beta \ge 1,\beta~odd}} \binom{l-q}{\alpha} \binom{q-1}{\beta} \Big\|\frac{D_{x'}^{\alpha} D_n^{\beta} h_0}{x_n} \Big\|_{L^\infty(G_r)}  \Big\|\frac{D_{x'}^{l-q-\alpha}f_{q-\beta}}{x_n}\Big\|_{L^\infty(G_r)}.
	\end{aligned}\end{equation}
	By \eqref{fqodd}, we finish the proof as we do in the proof of Proposition  6.3.
\end{proof}
	
Therefore, with Proposition \ref{evenq} and Proposition \ref{oddq} in hand, Theorem \ref{NAnalytic} is proved by induction. As a direct corollary, we have
\begin{Corollary}\label{Corn+1}
	Let $C_0, C_1, B_1$ be in Theorem \ref{NAnalytic}. Then for any $x \in G_r$ and $l \ge 0$,
	\begin{align}
		\label{6.58}&|D_{x'}^l g_{n+1}| \le C_3 B_1^n C_1^{(l+n-2)^+}(l+n-2)^+ x_n;\\
		\label{6.59}&|D_{x'}^{l} f_{n+1}| \le C_0 B_1^{n+1} C_1^{(l+n-2)^+}(l+n-2)^+! x_n^2.
	\end{align}
\end{Corollary}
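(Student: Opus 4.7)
The plan is to deduce (6.59) as a direct specialization of Proposition \ref{oddq} and to reduce (6.58) to the auxiliary estimates that were already established along the way. Since $n$ is even, $q=n+1$ is odd, so setting $q=n+1$ in Proposition \ref{oddq} and reindexing $l\mapsto l+n+1$ yields (6.59) immediately. All the substantive work is therefore in (6.58), and I would attack it via the explicit formula for $g_k$ proved in Lemma \ref{lemma5.2}.

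Using Lemma \ref{lemma5.2} with $k=n+1$,
\begin{equation*}
	g_{n+1}=D_n^n g_1 + \sum_{m=1}^{n}\binom{n}{m-1}(D_n^{n+1-m}h_0)\,\frac{f_m}{x_n}.
\end{equation*}
I would apply $D_{x'}^l$ termwise and expand each product by Leibniz. The leading contribution $D_{x'}^l D_n^n g_1$ is bounded by the ``even $k$'' estimate for mixed derivatives of $g_1$ derived inside the proof of Proposition \ref{evenq} (with $k=n$): this already gives $C_3 B_1^n C_1^{(l+n-2)^+}(l+n-2)^+!\,x_n$, exactly of the desired form, including the single factor of $x_n$.

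For each sum term the key observation is parity: since $n$ is even, $n+1-m$ and $m$ have opposite parities. If $m$ is odd then $f_m/x_n$ carries a factor $x_n$ (from the odd case of Theorem \ref{NAnalytic}) while $D_n^{n+1-m}h_0$ does not need one (even case of the $h_0$ estimate); if $m$ is even then $f_m/x_n$ is only bounded, but $D_n^{n+1-m}h_0$ contributes the vanishing factor $x_n$ (odd case of the $h_0$ estimate). Either way, each summand has exactly one factor $x_n$, so after combining the tangential estimates via the Leibniz rule and collapsing the partition sums through Lemma \ref{lemma2.4} (as was done in the proofs of Lemmas \ref{lemma3.2}, \ref{lemma3.3} and in the inductive steps of Propositions \ref{evenq}, \ref{oddq}), each term gets a bound of the shape $CB_1^{n+1}C_1^{(l+n-2)^+}(l+n-2)^+!\,x_n$. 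The extra factor $B_1$ (a fixed constant depending only on $u,\varphi,r,n$) is absorbed into $C_3$, producing (6.58).

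The main obstacle I expect is purely bookkeeping: one must track the parities of $n+1-m$ and $m$ in all cases, pick the correct auxiliary estimate (for $h_0$, $f_m$, or $g_1$) in each case, and verify that in every summand exactly one $x_n$ appears so the sum does not accumulate an extra $x_n$ power that would spoil the form of (6.58). No new ideas are needed beyond the template already used in Propositions \ref{evenq} and \ref{oddq}; the argument is a straightforward specialization plus careful case analysis.
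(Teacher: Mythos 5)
Your proposal is correct, and since the paper states this ``as a direct corollary'' without giving any argument, your write-up is essentially the proof the authors left implicit. Two small remarks. First, (6.59) is most cleanly deduced not from Proposition \ref{oddq} but directly from the statement of Theorem \ref{NAnalytic}, case (1), with $k=n+1$ (odd because $n$ is even) and the index shift $l\mapsto l+n+1$; the content is the same, but Theorem \ref{NAnalytic} is the object whose induction has just been closed, so citing it is cleaner than reaching back into Proposition \ref{oddq}. Second, your bookkeeping correctly reveals that the sum terms in $g_{n+1}=D_n^n g_1+\sum_{m=1}^{n}\binom{n}{m-1}(D_n^{n+1-m}h_0)\,f_m/x_n$ carry a factor $C_0C_3B_1^{\,n+1}$ (one $B_1^{\,n+1-m}$ from the $h_0$ estimate and one $B_1^m$ from the $f_m$ estimate), which is one power of $B_1$ more than the $C_3B_1^n$ written in \eqref{6.58}. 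Your remark that this is harmless is right --- $B_1$ is a fixed constant depending only on $u,\varphi,r,n$, and the corollary is only used to conclude boundedness of the quantities in \eqref{xi} --- but strictly speaking the constant on the right of \eqref{6.58} should be enlarged (to, say, $2^nC_0C_3B_1^{\,n+1}$ after absorbing the binomial coefficients and the Lemma~\ref{lemma2.4} factors), and the statement as printed also has a typo, $(l+n-2)^+$ without the factorial. Your parity analysis --- verifying that in each summand exactly one of $D_n^{n+1-m}h_0$ and $f_m/x_n$ contributes a factor $x_n$ because $m$ and $n+1-m$ have opposite parities --- is precisely the nontrivial check needed, and it is correct.
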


Now we can use Corollary \ref{Corn+1} to prove Theorem \ref{thm1.1}.
\begin{proof}[proof of Theorem \ref{thm1.1}]
	Denote $\xi = \frac{f_{n+1}}{x_n}$. By \eqref{5.10}, we obtain
	\[
	x_n D_n \xi + \xi=D_n(x_n \xi) = h_{n+1} \xi + g_{n+1},
	\]
	which yields
	\begin{equation}\label{xi}
		D_n \xi =\frac{h_{n+2}}{x_n} \xi + \frac{g_{n+1}}{x_n}.
	\end{equation}
	Recalling the discussions between Lemma \ref{lemma5.1} and \ref{lemma5.2}, we see that the functions 
 $h_{n+2}$, $g_{n+2}$ and $\xi = D_n f_{n+1}-f_{n+2}$ are all  global smooth in $\overline{G_r}$, 
 which, together with (6.58) and the fact  $h_{n+2} = h_0 - n-2 =0$ on $\partial G_r \cap \{x_n=0\}$, means that there are no singular terms in the equation \eqref{xi} about $\xi$. Therefore,  applying
  the classical theory of ordinary differential equations or the induction method in the proof of Theorem \ref{NAnalytic} to equation (6.60),  by the  derivative estimates in Theorem \ref{NAnalytic} 
  we see that $\xi$ is analytic in $\overline{G_r}$. It follows from \eqref{5.8} that $u$ is also analytic in $\overline{G_r}$.
\end{proof}

			\newpage

\end{document}